\documentclass[10pt]{article}
\usepackage{amsmath,amssymb,amsthm,graphicx,url,colordvi,enumerate,amsfonts,graphics,mathrsfs,bbm,yfonts}
\usepackage[left=1.5in,top=1.25in,right=1.5in,nohead,bottom=1.25in]{geometry}

\DeclareFontFamily{OT1}{pzc}{}
\DeclareFontShape{OT1}{pzc}{m}{it}{<-> s * [1.150] pzcmi7t}{}
\DeclareMathAlphabet{\mathpzc}{OT1}{pzc}{m}{it}

\newcommand{\N}{\mathbb{N}}

\newcommand{\C}{\mathbb{C}}
\newcommand{\bA}{\mathbf{A}}

\newcommand{\inv}{^{-1}}
\newcommand{\Sph}{{\mathscr T}}
\newcommand{\T}{{\mathscr T}}
\newcommand{\Susp}{{\cal S}}
\newcommand{\I}{{\cal I}}
\newcommand{\F}{{\cal F}}

\newcommand{\E}{{\cal E}}
\newcommand{\M}{{\cal M}}

\DeclareMathOperator{\Ho}{Ho}
\newcommand{\Top}{\mathpzc{Top}}
\newcommand{\hTop}{\Ho(\mathpzc{Top})}

\newcommand{\rk}{{\textfrak{r}}}

\newcommand{\D}{{\cal D}}

\newtheorem{theorem}{Theorem}[section]
\newtheorem{lemma}[theorem]{Lemma}
\newtheorem{conjecture}[theorem]{Conjecture}
\newtheorem{proposition}[theorem]{Proposition}

\newtheorem{definition}[theorem]{Definition}
\newtheorem{corollary}[theorem]{Corollary}

\newtheorem{question}[theorem]{Question}

\theoremstyle{remark}
\newtheorem{remark}[theorem]{\bf Remark}
\newtheorem{example}[theorem]{\bf Example}
\newtheorem*{acknowledgements}{Acknowledgements}

\DeclareMathOperator{\colim}{colim}
\DeclareMathOperator{\hocolim}{hocolim}
\DeclareMathOperator{\lat}{{\mathcal L}}
\DeclareMathOperator{\cl}{cl}

\DeclareMathOperator{\OA}{\AA}

\begin{document}
\title{Topological representations of matroid maps}

\author{Matthew T. Stamps \\ Aalto University \\ \tt matthew.stamps@aalto.fi}

\date{May 1, 2012}

\maketitle

\begin{abstract}
The Topological Representation Theorem for (oriented) matroids states that every (oriented) matroid arises from the intersection lattice of an arrangement of codimension one homotopy spheres on a homotopy sphere.  In this paper, we use a construction of Engstr\"{o}m to show that structure-preserving maps between matroids induce topological mappings between their representations; a result previously known only in the oriented case.  Specifically, we show that weak maps induce continuous maps and that this process is a functor from the category of matroids with weak maps to the homotopy category of topological spaces.  We also give a new and conceptual proof of a result regarding the Whitney numbers of the first kind of a matroid.  
\end{abstract}

\section{Introduction}

A celebrated achievement in the theory of oriented matroids is the \emph{Topological Representation Theorem} of Folkman and Lawrence \cite{Bjorner95,BLSWZ} which states that every oriented matroid arises from an arrangement of codimension one pseudospheres on a sphere.  This is converse to the more straightforward observation that the intersection lattice of an arrangement of codimension one homotopy spheres on a homotopy sphere is always isomorphic to the lattice of flats of a matroid.  The original proof of the Topological Representation Theorem relied heavily on the orientation data, so it was long believed that an analogous representation theorem for unoriented matroids did not exist.  In 2003, however, Swartz \cite{Swartz03} gave a surprising proof that \emph{any} matroid can, indeed, be represented by a codimension one homotopy sphere arrangement.  Anderson recently gave an explicit construction for such arrangements in \cite{Anderson2010}.  Shortly after, Engstr\"{o}m \cite{Engstrom10}  showed that for every matroid $M$, there is infinite family of subspace arrangements, indexed by finite regular CW complexes, called $X$-arrangements, whose intersection lattices are all isomorphic to the lattice of flats of $M$.
These representations can be constructed explicitly using diagrams of spaces, are equipped with natural geometric realizations, and include the codimension one spherical arrangements described above.

Accompanying the Topological Representation Theorem for oriented matroids is a result of Anderson \cite{Anderson2001}, which states that weak maps between oriented matroids induce topological mappings between their representations.  In this paper, we prove that weak maps between matroids induce continuous maps between their Engstr\"om representations and show that several combinatorial properties of weak maps have nice topological interpretations within this framework.  For example, the continuous representation of a weak map is equivariant whenever $X$, in the representing $X$-arrangements, supports a free action of a group.  We also apply our techniques to give a new and conceptual proof of a result regarding the Whitney numbers of the first kind of a matroid, see Proposition \ref{whit_dec}.

Following the spirit of \cite{Engstrom10}, much of the work in this paper involves setting the correct viewpoint under which the desired results fall out naturally from the combinatorial comparison lemmas established in \cite{WZZ99} and \cite{ZZ93}.  A technical aspect, which requires careful bookkeeping, is that Engstr\"{o}m's construction involves a choice of poset map from the lattice of flats of a matroid to a boolean lattice.  The choice of such a map allows one to construct different geometric realizations for a given representation, which is vital in a number of applications.  For this reason, we introduce the notions of immersed matroids and admissible weak maps.  We prove our results first for these new objects and then show that every weak map between matroids can be written as an admissible weak map between immersed matroids.  

Here is an overview of our approach: First, we recall that a weak map $\tau$ between matroids $M$ and $N$ induces a poset map $\tau^{\#}$ between their lattices of flats, $\lat(M)$ and $\lat(N)$.  Then, we construct Engstr\"{o}m representations on $\lat(M)$ and $\lat(N)$, using diagrams of spaces, and show that $\tau^{\#}$ induces a morphism of diagrams.  A result in \cite{WZZ99} yields the desired continuous maps between representations.
Next, we introduce the category $\OA(r,n)$ of rank $r$ immersed matroids on $n$ elements with admissible weak maps and show that the above process yields a functor from $\OA(r,n)$ to $\hTop$, the homotopy category of $\Top$. This extends to a functor from $\M(r,n)$, the category of rank $r$ matroids on $n$ elements with weak maps, to $\hTop$ by composing with a natural map $\M(r,n) \to \OA(r,n)$.  

Throughout the paper, a \emph{representation} always refers to a topological space and a \emph{geometric realization} refers to its classical definition from combinatorial topology.  These should not be confused with the alternative notions of representability or realizability (over some field) from matroid theory.

The paper is organized as follows:  In Section 2, we introduce the basic definitions and tools for both matroid theory and diagrams of spaces, followed by a review of Engstr\"{o}m's Representation Theorem for Matroids.  In Section 3, we state and prove our main results; first, that (admissible) weak maps induce continuous maps between the topological representations of (immersed) matroids; second, that the Whitney numbers of the first kind are encoded in the Betti numbers of certain topological representations of a matroid; and third, that the continuous representations of surjective weak maps give surjections in homology.  We show that Engstr\"{o}m's construction is a functor, up to homotopy, with respect to (admissible) weak maps in Section 4 and we conclude the paper with some open questions and future work in Section 5.  

\section{Background}

In this section, we give a brief introduction to the theory of matroids along with a tool box of the necessary topological machinery for the paper and a review of Engstr\"{o}m's construction for representing matroids with subspace arrangements.    

\subsection{Matroids}   

We begin with a review of the key definitions and theorems about matroids and geometric lattices.  For a more in-depth introduction to matroid theory, see \cite{Oxley} and \cite{White}.

\begin{definition}
A \emph{matroid} $M$ is a pair $(E,\I)$ where $E$ is a finite set and $\I \subseteq 2^E$ satisfying  \begin{itemize} \item $\emptyset \in \I$; \item if $Y \in \I$ and $X \subseteq Y$, then $X \in \I$; \item if $X , Y \in \I$ with $ |X| < |Y|$, there exists $y \in Y \setminus X$ such that $X \cup \{y\} \in \I$. \end{itemize}  The set $E = E(M)$ is called the \emph{ground set} of $M$ and $\I = \I(M)$ is called the collection of \emph{independent sets} of $M$.   \end{definition} 

The \emph{rank function} $\rk : 2^E \to \N$ is given by $\rk(X) = \max \{|Y| : Y \subseteq X \ \& \ Y \in \I\}$.  We define the \emph{rank} of $M$, denoted $\rk(M)$, by $\rk(E)$.  When it is necessary to distinguish the rank function of a matroid $M$ from that of other matroids, we denote its rank function by $\rk_M$.  The \emph{closure} of a set $X \subseteq E$ is $$\cl(X) = \{x \in E \ | \ \rk(X \cup \{x\}) = \rk(X)\}.$$ A \emph{flat} of $M$ is any set $X \subseteq E$ where $X = \cl(X)$.  Matroids have many different equivalent definitions, one of which is in terms of lattices.  In particular, the flats of a rank $r$ matroid form a graded, rank $r$, \emph{geometric} lattice where meets are given by intersections and the join of all atoms is $E$, see \cite{Bjorner82} and \cite{Stanley}.

\begin{definition} A lattice $L$ is called \emph{geometric} if (1) it is semimodular (i.e. $$\rk(p) + \rk(q) \geq \rk(p \wedge q) + \rk(p \vee q)$$ for all $p,q \in L$); and (2) every element is the join of atoms.
\end{definition}

We denote the lattice of flats of a matroid $M$ by $\lat(M)$.  If $M$ is simple (i.e. $cl(X) = X$ for all $X \subset E(M)$ with $|X| \leq 1$), then $M$ is completely determined by $\lat(M)$.

\begin{theorem}[\cite{Oxley}, Theorem 1.7.5] A lattice $L$ is geometric if and only if $L \cong \lat(M)$ for some matroid $M$.  If one restricts to simple matroids, then $M$ is unique.
\end{theorem}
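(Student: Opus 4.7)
The plan is to prove the two directions separately and then address uniqueness. For the forward direction, given a matroid $M=(E,\I)$, I would verify that $\lat(M)$, ordered by inclusion with meet $F \wedge G = F \cap G$ and join $F \vee G = \cl(F \cup G)$, forms a geometric lattice. Semimodularity follows from the submodular inequality $\rk(X \cup Y) + \rk(X \cap Y) \leq \rk(X) + \rk(Y)$, which is itself a consequence of the independent-set augmentation axiom applied to bases of $X \cap Y$, $X$, and $Y$. The atom condition follows from the observation that every flat $F$ is the closure of any of its bases $\{e_1,\dots,e_k\}$, whence $F = \cl(\{e_1\}) \vee \cdots \vee \cl(\{e_k\})$ is a join of atoms in $\lat(M)$.

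For the converse, given a geometric lattice $L$, I would construct a matroid by taking the ground set $E$ to be the set of atoms of $L$ and declaring a finite subset $A \subseteq E$ independent exactly when $\rk_L(\bigvee A) = |A|$. The first two independence axioms are immediate. For augmentation, I would argue contrapositively: if $A, B$ are independent with $|A| < |B|$ and no $b \in B \setminus A$ yields an independent set $A \cup \{b\}$, then semimodularity forces every such $b$ to satisfy $b \leq \bigvee A$, so $\bigvee B \leq \bigvee A$ and hence $|B| = \rk_L(\bigvee B) \leq \rk_L(\bigvee A) = |A|$, a contradiction. One then checks that the resulting closure operator on $E$ corresponds to the map $A \mapsto \{a \in E : a \leq \bigvee A\}$, producing a natural isomorphism from $L$ to the lattice of flats of the constructed matroid.

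The main technical obstacle lies in the converse: verifying that the closure operator just described genuinely recovers $L$. This step relies essentially on the atom-generation axiom together with semimodularity, and requires showing that the atoms below an element $x \in L$ are precisely those whose join equals $x$, with no extraneous atoms slipping in. Finally, for uniqueness in the simple case, note that when $M$ is simple the atoms of $\lat(M)$ are in bijection with $E$, and the closure operator on $E$ is recoverable from the lattice as $\cl(A) = \{e \in E : \cl(\{e\}) \leq \bigvee_{a\in A} \cl(\{a\})\}$. Hence any lattice isomorphism $\lat(M) \cong \lat(M')$ between simple matroids lifts canonically to a matroid isomorphism $M \cong M'$.
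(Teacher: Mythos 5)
The paper does not prove this theorem; it is cited directly from Oxley (Theorem 1.7.5) as background material, so there is no in-paper proof to compare against. That said, your argument is correct and is the standard cryptomorphism between matroids and geometric lattices that one finds in Oxley, White, or Stanley. The forward direction (submodularity of the rank function gives semimodularity of $\lat(M)$; closure of a basis shows atom-generation) is exactly right. In the converse, the key step---that failure of augmentation forces $b \leq \bigvee A$---uses the fact that $L$ is graded (so that $\rk_L(\bigvee A \vee b) = \rk_L(\bigvee A)$ together with $\bigvee A \leq \bigvee A \vee b$ forces equality); geometric lattices are graded, so this is fine, but you might flag it explicitly since it is the hinge of the argument. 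Two smaller points you could make more explicit: the downward-closure axiom, while easy, relies on the inequality $\rk_L(\bigvee A) \leq |A|$ for any set $A$ of atoms (a consequence of semimodularity), and the final identification $L \cong \lat(M_L)$ requires the observation that each $x \in L$ equals the join of \emph{all} atoms below it, not merely some generating subset. Your uniqueness argument for simple matroids is correct and complete.
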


We will use the lattice-theoretic interpretation of matroids throughout the rest of the paper.  The following two lemmas will be especially useful:

\begin{lemma}[\cite{Stanley}, Corollary 3.9.3 (dual version)]\label{enum_lem}
Let $L$ be a geometric lattice and let $p$ be any rank one element of $L$.  Then the M\"{o}bius function $\mu_{L}$ of $L$ satisfies $$\mu_{L}(\hat{0},\hat{1}) = - \sum\limits_{\begin{matrix}\scriptsize \text{$q$ coatom of $L$} \\ \scriptsize \text{$q \ngeq p$}\end{matrix}} \mu_{L}(\hat{0},q).$$
\end{lemma}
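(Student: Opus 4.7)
The plan is to deduce the identity from (the dual form of) Weisner's theorem, which states that for any finite lattice $L$ with $\hat 0 < \hat 1$ and any $a \in L$ with $a \neq \hat 0$,
$$\sum_{\substack{x \in L \\ x \vee a = \hat 1}} \mu_L(\hat 0, x) = 0.$$
Applying this with $a = p$, the problem reduces to identifying the set $S := \{x \in L : x \vee p = \hat 1\}$ and observing that its contribution splits off the term $\mu_L(\hat 0, \hat 1)$ together with the coatom sum in the statement.

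The key step is therefore to show that, in a geometric (hence semimodular, atomistic) lattice, $S$ consists precisely of $\hat 1$ together with those coatoms $q$ satisfying $q \not\geq p$. For the easy direction, if $q$ is a coatom with $q \not\geq p$, then $q < q \vee p \leq \hat 1$, forcing $q \vee p = \hat 1$ because $q$ is a coatom. Conversely, suppose $x \in S$ with $x \neq \hat 1$. Then $p \not\leq x$ (else $x \vee p = x < \hat 1$), and since $p$ is an atom this means $x \wedge p = \hat 0$. Semimodularity then gives
$$\rk(\hat 1) = \rk(x \vee p) \leq \rk(x) + \rk(p) - \rk(x \wedge p) = \rk(x) + 1,$$
so $x$ must be a coatom.

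Splitting the Weisner sum according to this dichotomy yields
$$\mu_L(\hat 0, \hat 1) + \sum_{\substack{q \text{ coatom of } L \\ q \not\geq p}} \mu_L(\hat 0, q) = 0,$$
which rearranges to the claimed formula.

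The main obstacle is really just the semimodularity argument identifying $S$; once that is in hand, Weisner's theorem delivers the result immediately. If one prefers to avoid invoking Weisner as a black box, the alternative is to start from the defining recursion $\sum_{x \leq y} \mu_L(\hat 0, x) = \delta_{\hat 0, y}$, sum it over $y$ in a well-chosen order ideal (namely $\{y : y \not\geq p\}$), and collect terms — this essentially reproves Weisner in the case at hand and makes the role of the atom $p$ transparent, but the combinatorial content is the same.
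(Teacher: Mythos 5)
Your proof is correct, and it is essentially the standard argument: the paper itself does not prove this lemma but simply cites it as Stanley's Corollary 3.9.3 (dual version), which in Stanley's text is derived from Weisner's theorem in exactly the way you describe. Your identification of $\{x : x \vee p = \hat 1\}$ as $\{\hat 1\}$ together with the coatoms not above $p$ (via atomicity plus the semimodular rank inequality) is the correct key step, so there is nothing to compare beyond noting that you have reconstructed the proof the citation points to.
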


\begin{lemma}[\cite{Rota64}] \label{pos_lem}
If $L$ is a finite geometric lattice, then $\mu_L(\hat{0},p) \neq 0$ for each $p \in L$.
\end{lemma}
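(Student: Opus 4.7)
The plan is to prove the stronger statement that $(-1)^{\rk(p)} \mu_L(\hat 0, p) > 0$ for every $p \in L$, by strong induction on $\rk(p)$. Tracking a sign, rather than just non-vanishing, is what makes the induction go through cleanly: the recursion for $\mu_L$ writes $\mu_L(\hat 0, p)$ as a sum of smaller M\"obius values, and such a sum could a priori cancel; a uniform sign on the summands rules this out.

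The base case is immediate: when $\rk(p) = 0$, we have $p = \hat 0$ and $\mu_L(\hat 0, \hat 0) = 1 > 0$. For the inductive step I would pass to the interval $[\hat 0, p]$, which is itself a finite geometric lattice (meets, joins, and the rank function restrict, and every $x \leq p$ is the join of atoms of $L$ below it, which are precisely the atoms of $[\hat 0, p]$). Choose any atom $a$ of $[\hat 0, p]$ and apply Lemma \ref{enum_lem} to $[\hat 0, p]$ with $a$ in the role of the rank one element:
\[
\mu_L(\hat 0, p) \;=\; -\sum_{\substack{q \text{ coatom of } [\hat 0, p] \\ q \not\geq a}} \mu_L(\hat 0, q),
\]
where I have used that M\"obius values depend only on the interval. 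Every coatom $q$ of $[\hat 0, p]$ satisfies $\rk(q) = \rk(p) - 1$, so by the inductive hypothesis each $\mu_L(\hat 0, q)$ is nonzero with sign $(-1)^{\rk(p) - 1}$. As soon as the sum is nonempty, there is no cancellation, and hence $\mu_L(\hat 0, p)$ is nonzero with sign $(-1)^{\rk(p)}$.

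The one real obstacle is verifying that the sum is nonempty; equivalently, that at least one coatom of $[\hat 0, p]$ fails to lie above $a$. Here I would invoke the fact that geometric lattices are complemented: there exists $c \in [\hat 0, p]$ with $a \wedge c = \hat 0$ and $a \vee c = p$. Semimodularity then gives $\rk(p) = \rk(a \vee c) \leq \rk(a) + \rk(c) - \rk(a \wedge c) = 1 + \rk(c)$, so $\rk(c) \geq \rk(p) - 1$; and $c \neq p$, because otherwise $a \wedge c = a \neq \hat 0$. Thus $c$ is a coatom of $[\hat 0, p]$ with $c \not\geq a$, which contributes the required term to the sum and closes the induction.
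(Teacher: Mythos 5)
The paper itself offers no proof of this lemma; it is stated with a citation to Rota (1964). Your argument is a correct and clean rendering of exactly the classical approach: prove the stronger sign statement $(-1)^{\rk(p)}\mu_L(\hat 0,p)>0$ by induction on rank, applying Weisner's theorem (here in the guise of Lemma~\ref{enum_lem}) to the geometric interval $[\hat 0,p]$ so that all summands share a sign, and then use the fact that geometric lattices are complemented (together with semimodularity) to produce a coatom of $[\hat 0,p]$ not above the chosen atom, guaranteeing the sum is nonempty. Every step checks out --- in particular the observation that $[\hat 0,p]$ is itself geometric, the rank bound $\rk(c)\geq\rk(p)-1$ coming from $\rk(a\vee c)\leq\rk(a)+\rk(c)-\rk(a\wedge c)$, and the deduction $c\neq p$, $c\not\geq a$ from $a\wedge c=\hat 0$. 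Since the paper leaves the proof to the reference, there is nothing to compare against beyond noting that this is the standard argument, correctly executed.
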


Next, we introduce the structure-preserving maps between matroids.  There are two common notions for a morphism between matroids --- weak and strong maps.  As matroids are combinatorial analogs of vector configurations, matroid maps are combinatorial analogs of linear transformations, in the sense that they map vectors to vectors and dependent sets to dependent sets.  When working with matroid maps, it is customary to adjoin a zero element to each matroid $M = (E,\I)$ to get the matroid $M_o = (E \cup o, \I)$, see \cite{White}.  This allows us to express the deletion of an element $e$ as a set map (in which $e \mapsto o$).  The alteration has no effect on our work since $\lat(M)$ and $\lat(M_o)$ are always isomorphic as lattices, but the notation can be rather cumbersome.  Thus, whenever we consider a function $f : M \to N$ between matroids in this paper, we mean a set map from $E(M) \cup o$ to $E(N) \cup o$ mapping $o$ to $o$.

\begin{definition}
A \emph{weak map} is a function $\tau : M \to N$ satisfying the condition if $X \subseteq E(M)$ such that $\tau|_X$ is injective and $\tau(X) \in \I(N)$, then $X \in \I(M)$. 
\end{definition} 

An immediate consequence of this definition, see (\cite{White}, Section 9.1), is the following useful characterization of weak maps:  A function $\tau$ is a weak map from $M$ to $N$ if and only if for all $X \subseteq E(M)$, $$\rk_{N}(\tau(X)) \leq \rk_M(X).$$

\begin{definition}
A \emph{strong map} is a function $\sigma : M \to N$ satisfying the condition that the preimage of any flat in $N$ is a flat of $M$. 
\end{definition}

Just as with matroids, weak and strong maps have nice lattice-theoretic interpretations.  Any set map $f : E(M) \to E(N)$ induces an order-preserving map $f^{\#} : \lat(M) \to \lat(N)$ given by $f^{\#}(X) = cl(f(X))$ for each $X \subseteq E(M)$.  By the characterization above, a set map $\tau : E(M) \cup o \to E(N) \cup o$ mapping $o$ to $o$ is a weak map from $M$ to $N$ if and only if $\tau^{\#} : \lat(M) \to \lat(N)$ is a poset map where $\rk_{N}(\tau^{\#}(X)) \leq \rk_M(X)$ for all $X \in \lat(M)$.  In addition, a weak map $\sigma$ is strong if we add the requirement that $\sigma^{\#}$ is join-preserving (i.e. $\sigma^{\#}(X \vee Y) = \sigma^{\#}(X) \vee \sigma^{\#}(Y)$ for all $X,Y \in \lat(M))$.  A weak map $\tau : M \to N$ is called \emph{non-annihilating} if and only if $\tau^{\#}$ maps atoms to atoms.

\begin{remark}\label{surj_weak_map} If $\tau : M \to N$ is a surjective weak map, then each $Y \in E(N)$ can be rewritten as $Y' = \tau\inv(Y)$ to form a matroid $N' \cong N$ with ground set $E(M)$ such that $\tau$ is equivalent to $id : M \to N'$.  So, it is often sufficient to restrict our attention to identity maps between matroids on the same ground set.
\end{remark}

The next two lemmas will be useful for the applications in Section \ref{Apps}.

\begin{lemma} \label{surj_poset}
If $\tau : M \to N$ be a surjective weak map, then the poset map $\tau^{\#} : \lat(M) \to \lat(N)$ is surjective and for every $Y \in \lat(N)$, there exists $X \in {(\tau^{\#})}\inv(Y)$ with $\rk_M(X) = \rk_N(Y)$.  \end{lemma}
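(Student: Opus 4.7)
The plan is to construct, for any flat $Y \in \lat(N)$, an explicit preimage flat $X \in \lat(M)$ under $\tau^{\#}$ that has the correct rank; surjectivity of $\tau^{\#}$ will then be an immediate byproduct.

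First, I fix $Y \in \lat(N)$ and pick a basis $B \subseteq Y$ of $Y$, so $B$ is independent in $N$ with $|B| = \rk_N(Y)$. Because $\tau : E(M) \cup o \to E(N) \cup o$ is surjective, I can choose, for each $y \in B$, some $x_y \in \tau^{-1}(y) \subseteq E(M)$. Let $A = \{x_y : y \in B\}$. Since the $y$'s are distinct the $x_y$'s are distinct and $\tau|_A$ is a bijection from $A$ onto $B$; in particular $\tau|_A$ is injective. Since $\tau(A) = B$ is independent in $N$, the definition of a weak map forces $A \in \I(M)$, and hence $\rk_M(A) = |A| = |B| = \rk_N(Y)$.

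Next, set $X := \cl_M(A) \in \lat(M)$, so $\rk_M(X) = \rk_M(A) = \rk_N(Y)$. It remains to verify that $\tau^{\#}(X) = Y$. For one inclusion, $B = \tau(A) \subseteq \tau(X)$, so taking closures in $N$ gives $Y = \cl_N(B) \subseteq \cl_N(\tau(X)) = \tau^{\#}(X)$. For the reverse inclusion, the rank characterization of weak maps yields
\[
\rk_N(\tau^{\#}(X)) \;=\; \rk_N(\cl_N(\tau(X))) \;=\; \rk_N(\tau(X)) \;\leq\; \rk_M(X) \;=\; \rk_N(Y).
\]
Since $\tau^{\#}(X)$ and $Y$ are both flats of $N$, $Y \subseteq \tau^{\#}(X)$, and they have the same rank, they must be equal.

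Thus every $Y \in \lat(N)$ has a preimage of the asserted form, which simultaneously establishes the surjectivity of $\tau^{\#}$ and the rank condition. I do not foresee any significant obstacle: the only place where care is required is in invoking the weak map property, which needs both the injectivity of $\tau|_A$ and the independence of $\tau(A)$, and this is exactly what the lift from the basis $B$ provides.
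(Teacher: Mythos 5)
Your proposal is correct and follows essentially the same approach as the paper: both arguments take a basis of $Y$, produce an independent set in $M$ of the same cardinality, set $X$ to be its $M$-closure, and then sandwich $\tau^{\#}(X)$ between $Y$ and a flat of the same rank using the weak-map rank inequality. The only difference is presentational: the paper first invokes Remark~\ref{surj_weak_map} to replace $\tau$ by an identity map on a common ground set (so the ``lift'' of the basis is trivial), whereas you perform the lift explicitly using the surjectivity of $\tau$; the substance of the argument is identical.
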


\begin{proof}
By Remark \ref{surj_weak_map}, we may assume that $\tau$ is the identity map between matroids with a common ground set $E$ and hence that $\tau^\#(X) = cl_N(X)$ for all $X \in \lat(M)$.  Let $Y \in \lat(N)$ and $Y_I \in \I(N)$ be a maximal independent set of $N$ contained in $Y$.  Since $\tau$ is a weak map, $Y_I \in \I(M)$ as well.  So, consider $X = cl_M(Y_I) \in \lat(M)$.  By definition, $\rk_M(X) = |Y_I| = \rk_N(Y)$ and since $Y_I \subseteq X$, we also get $Y = cl_N(Y_I) \subseteq cl_N(X)$.  However, since $\tau$ is a weak map, we know that $\rk_N(\tau^{\#}(X)) = \rk_N(cl_N(X)) \leq \rk_M(X)$ and thus, $\tau^{\#}(X) = cl_N(X) = Y$.  
\end{proof}

\begin{definition}
For a rank $r$ matroid $M$, the $k$th \emph{truncation} $T^k(M)$ of $M$ is the matroid of rank $r - k$ on $E(M)$ whose rank function $\rk$ is given as follows:  For $X \subseteq E(M)$, $$\rk(X) = \begin{cases} r - k & \rk_M(X) \geq r - k, \\ \rk_M(X) & \text{otherwise.} \end{cases}$$ 
\end{definition} 

It is simple to check that the identity map on $E(M)$ gives a weak map $id_k : M \to T^k(M)$ for each $k \in [r]$ and that every surjective weak map $\tau : M \to N$ induces a weak map $\tau_k : T^k(M) \to N$ for each $k \in [r-\rk(N)]$.  In fact, one can show the following:  

\begin{lemma}[\cite{White}, Lemma 9.3.1]\label{weak_fact}
If $\tau : M \to N$ is a surjective weak map and $k = \rk(M)-\rk(N)$, then $\tau = \tau_k \circ id_k$.  In other words, every surjective weak map from $M$ to $N$ factors uniquely through $T^{\rk(M)-\rk(N)}(M)$.
\end{lemma}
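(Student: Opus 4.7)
The plan is to check three things in sequence: that $id_k : M \to T^k(M)$ is a weak map, that $\tau_k : T^k(M) \to N$ is a weak map, and that the composition $\tau_k \circ id_k$ equals $\tau$ in an essentially unique way. All three amount to short verifications via the rank-inequality characterization of weak maps recalled just after the definition of weak maps above.

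First, I would record the effect of truncation on ranks: for any $X \subseteq E(M)$, the rank function of $T^k(M)$ satisfies $\rk_{T^k(M)}(X) = \min(\rk_M(X), r-k)$, where $r = \rk(M)$. The bound $\rk_{T^k(M)}(X) \leq \rk_M(X)$ is then immediate, so $id_k$ is a weak map. With the choice $k = \rk(M) - \rk(N)$, one has $r - k = \rk(N)$, so $T^k(M)$ and $N$ have matching rank.

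Next, to show $\tau_k : T^k(M) \to N$ is a weak map, I would combine the weak-map inequality $\rk_N(\tau(X)) \leq \rk_M(X)$ for $\tau$ with the trivial bound $\rk_N(\tau(X)) \leq \rk(N) = r - k$; together these yield $\rk_N(\tau_k(X)) \leq \min(\rk_M(X), r-k) = \rk_{T^k(M)}(X)$, as required. Since $id_k$ is the identity on ground sets and $\tau_k$ agrees with $\tau$ as an underlying set map, the equality $\tau = \tau_k \circ id_k$ is tautological. Uniqueness follows for the same reason: any weak map $\tau' : T^k(M) \to N$ with $\tau' \circ id_k = \tau$ must agree with $\tau$ as a set map, hence equal $\tau_k$.

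I do not expect a substantive obstacle here; the lemma is essentially a bookkeeping statement confirming that the truncation at $k = \rk(M) - \rk(N)$ is designed precisely to absorb the rank drop of a surjective weak map. The only subtlety worth noting is that for $k$ strictly greater than $\rk(M) - \rk(N)$, no weak map $T^k(M) \to N$ could be surjective, since then $\rk(T^k(M)) < \rk(N)$; this is what singles out this particular truncation as the canonical one through which $\tau$ factors.
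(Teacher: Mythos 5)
The paper cites this result as Lemma 9.3.1 of \cite{White} and supplies no proof of its own, so there is no internal argument to compare against. Your verification is correct and is the standard one: the identity $\rk_{T^k(M)}(X) = \min(\rk_M(X),\, r-k)$ shows immediately that $id_k$ is a weak map, combining the weak-map inequality for $\tau$ with the trivial bound $\rk_N(\tau(X)) \leq \rk(N) = r - k$ shows $\tau_k$ is a weak map, and the factorization together with its uniqueness follow because $id_k$ is a bijection on ground sets, so $\tau_k$ is forced as a set map. The one point you pass over silently is that surjectivity of $\tau$ is what guarantees $k = \rk(M) - \rk(N) \geq 0$ (apply the weak-map inequality to $E(M)$ and use $\tau(E(M)) \supseteq E(N)$), which is needed for the truncation $T^k(M)$ to be well-defined; apart from that bookkeeping remark, the argument is complete.
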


We conclude this section with a common statistic on matroids which will give a nice combinatorial formula for the homotopy type of a topological representation.

\begin{definition}
The \emph{Whitney numbers} $w_k(M)$ \emph{of the first kind} are given by $$w_k(M) = \sum\limits_{X \ : \ \rk(X) = k} |\mu(\hat{0},X)|,$$ where the sum is over all the rank $k$ flats $X$ in $\lat(M)$, and $\mu$ is the M\"{o}bius function of $\lat(M)$.
\end{definition}

When we restrict our attention to matroids of a fixed rank, the Whitney numbers of the first kind behave very predictably with respect to weak maps \cite{Lucas74,Lucas75}.

\begin{proposition}[\cite{White}, Corollary 9.3.7]\label{whit_dec}
If $\tau : M \to N$ is a surjective weak map and $\rk(M) = \rk(N) = r$, then $w_k(M) \geq w_k(N)$ for each $k \in \{0,...,r\}$.
\end{proposition}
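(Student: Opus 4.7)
The plan is to prove the proposition by induction on the common rank $r$, with two interlocking steps: an ``outer'' step that matches each rank-$k$ flat of $N$ with a rank-$k$ flat of $M$ to compare their M\"obius values termwise, and an ``inner'' step that treats the top case $k = r$ via the recursion in Lemma \ref{enum_lem}. By Remark \ref{surj_weak_map}, I may assume throughout that $\tau = id$ is the identity on a common ground set $E$, with $\rk_N(X) \leq \rk_M(X)$ for every $X \subseteq E$; the base cases $r \leq 1$ are immediate.

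For the outer step at a rank $k < r$, let $Y$ be a rank-$k$ flat of $N$ and choose a basis $Y_I \subseteq Y$ of $Y$ in $N$. Since $\tau$ is weak, $Y_I$ is also independent in $M$ with $|Y_I| = k$, so $X_Y := \cl_M(Y_I)$ is a rank-$k$ flat of $M$ with $\tau^{\#}(X_Y) = Y$ (the construction used in the proof of Lemma \ref{surj_poset}), and the assignment $Y \mapsto X_Y$ is injective. With a compatible choice of bases for each subflat $Y' \leq Y$, the restriction $\tau^{\#}|_{[\hat{0}, X_Y]} \colon [\hat{0}, X_Y] \to [\hat{0}, Y]$ becomes a surjective, rank-non-increasing poset map between rank-$k$ geometric lattices. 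Applying the top case of the proposition to this pair of intervals (available inductively since $k < r$) gives $|\mu_M(\hat{0}, X_Y)| \geq |\mu_N(\hat{0}, Y)|$, and summing over $Y$ yields $w_k(M) \geq w_k(N)$.

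For the top case $k = r$, I use Lemma \ref{surj_poset} to choose an atom $p_M$ of $\lat(M)$ whose image $p := \tau^{\#}(p_M)$ is an atom of $\lat(N)$. Combining Lemma \ref{enum_lem} with the sign-alternation of $\mu$ on geometric lattices (a consequence of semimodularity together with Lemma \ref{pos_lem}) gives
\[ |\mu_M(\hat{0}, \hat{1})| = \sum_{q_M \not\geq p_M} |\mu_M(\hat{0}, q_M)|, \qquad |\mu_N(\hat{0}, \hat{1})| = \sum_{q \not\geq p} |\mu_N(\hat{0}, q)|, \]
where $q_M, q$ range over coatoms of $M$ and $N$ respectively. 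For each coatom $q$ of $N$ with $q \not\geq p$, the outer step at rank $r-1$ supplies a coatom $q_M$ of $M$ with $\tau^{\#}(q_M) = q$ and $|\mu_M(\hat{0}, q_M)| \geq |\mu_N(\hat{0}, q)|$; moreover $q_M \not\geq p_M$, since otherwise $q = \tau^{\#}(q_M) \geq \tau^{\#}(p_M) = p$, contradicting $q \not\geq p$. Summing closes the induction.

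The main obstacle I anticipate is ensuring that $\tau^{\#}|_{[\hat{0}, X_Y]}$ in the outer step is genuinely surjective onto $[\hat{0}, Y]$: this requires choosing bases of the subflats of $Y$ coherently inside $Y_I$, which cannot always be done in a single stroke since $\cl_M$ and $\cl_N$ can differ sharply on elements of $Y$. Overcoming this likely requires selecting $X_Y$ canonically (for instance as the join of all rank-$k$ preimages of $Y$ under $\tau^{\#}$, whose non-emptiness is guaranteed by Lemma \ref{surj_poset}) and verifying surjectivity on the interval by an independent bookkeeping argument.
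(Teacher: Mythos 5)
Your approach is genuinely different from the paper's, which derives the proposition as an immediate corollary of Theorem \ref{betti_dec}: the Whitney numbers appear as Betti numbers of $\T_{S^1}(M)$, and $\tau^*$ induces a surjection in homology (via Lemmas \ref{surj_poset} and \ref{VUFL}), so the inequality falls out in aggregate. You instead attempt a purely lattice-theoretic induction on $r$ built on an injective, termwise $\mu$-comparison $Y \mapsto X_Y$ of rank-$k$ flats of $N$ to rank-$k$ flats of $M$.

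There is a genuine gap here, and the obstacle you flag at the end is not merely a bookkeeping nuisance but a fatal one: the termwise inequality $|\mu_M(\hat 0, X_Y)| \geq |\mu_N(\hat 0, Y)|$ can fail for \emph{every} eligible choice of $X_Y$. Take $M = U_{3,6}$ on $E = \{1,\dots,6\}$ and let $N$ be the rank-$3$ matroid on $E$ in which $1$ and $2$ are parallel, $\{1,2,3,4\}$ is a rank-$2$ flat, and $5,6$ are otherwise generic; the identity is a surjective weak map $M \to N$ with $\rk(M)=\rk(N)=3$. For $Y = \{1,2,3,4\}$ the interval $[\hat 0, Y]$ in $\lat(N)$ has three atoms, so $|\mu_N(\hat 0, Y)| = 2$. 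But every rank-$2$ flat of $U_{3,6}$ is a two-element set, so every rank-$2$ interval $[\hat 0, X]$ in $\lat(M)$ has exactly two atoms and $|\mu_M(\hat 0, X)| = 1$. Hence no rank-$2$ flat $X_Y$ of $M$ satisfies the termwise inequality; correspondingly $\tau^\#|_{[\hat 0, X_Y]}$ can never surject onto $[\hat 0, Y]$ (a cardinality obstruction: $4 < 5$), so no ``canonical'' choice of $X_Y$ or coherent choice of bases can rescue the outer step, and the same flat $Y$ breaks the coatom-matching in your inner step when, say, $p = \{5\}$. The proposition is nevertheless true here ($w_2(M) = 15 \geq 9 = w_2(N)$, $w_3(M) = 10 \geq 5 = w_3(N)$) precisely because $M$ has \emph{more} rank-$k$ flats, which compensates for their smaller $\mu$-values --- an aggregate phenomenon that an injective termwise matching cannot see. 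This is exactly what the paper's route through Theorem \ref{betti_dec} buys: the homology surjection compares the whole representations $\T_{S^1}(M)$ and $\T_{S^1}(N)$ at once, with no flat-by-flat comparison required.
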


In the case that $\tau$ in Proposition \ref{whit_dec} is a strong map, we get that $w_k(M) = w_k(N)$ for each $k \in \{0,...,r\}$ which is evidenced in the following proposition.

\begin{proposition}[\cite{Oxley}, Corollary 7.3.4]\label{strong_cor}
If $\sigma : M \to N$ is a surjective strong map and $\rk(M) = \rk(N)$, then $M \cong N$.
\end{proposition}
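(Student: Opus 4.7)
My plan is to use only the lattice-theoretic tools already developed in this section. First, by Remark \ref{surj_weak_map}, I would assume $\sigma : M \to N$ is the identity map between matroids on a common ground set $E$, so that $\sigma^{\#}(X) = \cl_N(X)$ for every $X \in \lat(M)$. Because $\sigma$ is strong, the induced poset map $\sigma^{\#} : \lat(M) \to \lat(N)$ is join-preserving and rank non-increasing, and Lemma \ref{surj_poset} implies it is surjective; applied at $\hat{1}_N$, it also yields $\sigma^{\#}(\hat{1}_M) = \hat{1}_N$. I would then show that $\sigma^{\#}$ is a rank-preserving lattice isomorphism, from which $M = N$ (hence $M \cong N$) follows.

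The central step will be to establish rank-preservation via a saturated chain argument. I would fix a saturated chain $\hat{0}_M = X_0 \lessdot X_1 \lessdot \cdots \lessdot X_r = \hat{1}_M$ in $\lat(M)$ and write $X_{i+1} = X_i \vee_M a_{i+1}$ for an atom $a_{i+1}$ of $\lat(M)$. Since $\sigma^{\#}$ is join-preserving, $\sigma^{\#}(X_{i+1}) = \sigma^{\#}(X_i) \vee_N \sigma^{\#}(a_{i+1})$, where $\rk_N(\sigma^{\#}(a_{i+1})) \leq 1$ by rank non-increase. Semimodularity in $\lat(N)$ then gives $\rk_N(\sigma^{\#}(X_{i+1})) \leq \rk_N(\sigma^{\#}(X_i)) + 1$. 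Because the sequence of ranks must rise from $0$ to $r$ in exactly $r$ steps, each step must increase the rank by exactly $1$, so $\rk_N(\sigma^{\#}(X_i)) = i$ along the chain. Extending any $X \in \lat(M)$ to a saturated chain then yields $\rk_N(\sigma^{\#}(X)) = \rk_M(X)$.

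Injectivity will then be immediate: if $\sigma^{\#}(X) = \sigma^{\#}(X')$, join-preservation gives $\sigma^{\#}(X \vee_M X') = \sigma^{\#}(X)$, and the rank-preservation just established forces $\rk_M(X \vee_M X') = \rk_M(X)$, so $X' \leq X \vee_M X' = X$; by symmetry $X = X'$. Thus $\sigma^{\#}$ is a rank-preserving lattice isomorphism, and since $\sigma$ is the identity on $E$, the lattices $\lat(M)$ and $\lat(N)$ coincide as sub-posets of $2^E$ with identical rank functions, which forces $M = N$. The hard part will be the saturated chain argument: the essential observation is that the extra join-preservation enjoyed by strong maps (and not by arbitrary weak maps) is precisely what prevents the rank from jumping by more than $1$ across a cover, which combined with the equality of total ranks leaves no slack anywhere along the chain.
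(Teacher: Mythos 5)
The paper itself gives no proof of this proposition---it is cited to Oxley (Corollary 7.3.4)---so there is no internal proof to compare against; the assessment below concerns your argument on its own merits.

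The core of your argument is correct and well chosen. After the reduction of Remark \ref{surj_weak_map} to $\sigma = id$ on a common ground set $E$, the saturated-chain argument is sound: semimodularity of $\lat(N)$ plus join-preservation caps each rank jump at $1$, while $\sigma^{\#}(\hat{0}_M)=\hat{0}_N$ and $\sigma^{\#}(\hat{1}_M)=\hat{1}_N$ force the total rank to rise by $r$ in $r$ steps, giving rank-preservation on every flat; injectivity then follows cleanly from join-preservation, and the same observation (that $\sigma^{\#}(X)\leq\sigma^{\#}(X')$ forces $X\leq X'$) shows $\sigma^{\#}$ is a poset isomorphism.

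The final step, however, has a gap. The sentence ``since $\sigma$ is the identity on $E$, the lattices $\lat(M)$ and $\lat(N)$ coincide as sub-posets of $2^E$'' is not a valid deduction from what precedes it: you have established that $\sigma^{\#}(X)=\cl_N(X)$ is a rank-preserving bijection onto $\lat(N)$ with $\sigma^{\#}(X)\supseteq X$, but a bijection satisfying $\sigma^{\#}(X)\supseteq X$ need not be the identity map of sets. To close this you must return to the \emph{definition} of a strong map (preimages of flats are flats), which for $\sigma=id$ says precisely that every flat of $N$ is a flat of $M$, i.e.\ $\lat(N)\subseteq\lat(M)$ as subsets of $2^E$; combined with $|\lat(M)|=|\lat(N)|$ from your bijection, this yields $\lat(M)=\lat(N)$ and hence $\cl_M=\cl_N$, so $M=N'$ where $N'\cong N$ is the pullback matroid from Remark \ref{surj_weak_map}. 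Throughout, you work only with the join-preservation characterization of strong maps, but the set-theoretic definition is what is actually needed at the end---worth making explicit.
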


\subsection{Diagrams of Spaces} \label{DS}

Next, we build up the necessary topological machinery in the way of diagrams of spaces.  A \emph{diagram of spaces} is a (covariant) functor $\D : I \to \Top$ for some small category $I$.   In this paper, we are only concerned with the case where $I$ is a finite poset. Recall that, in the language of category theory, a poset $P = (S,\leq)$ over a set $S$ is a category whose objects are the elements of $S$ and between any two points $p,q \in S$, there is a unique morphism $p \to q$ when $p \geq q$ or no morphism otherwise.  In this setting, a functor $\D : P \to \Top$ consists of the following data: 

\begin{definition}
A $P$-diagram of spaces $\D$ consists of \begin{itemize} \item a finite poset $P$,  \item a CW complex $D(p)$ for every $p \in P$, \item a continuous map $d_{pq} : D(p) \to D(q)$ for every pair $p \geq q$ of $P$ satisfying \\ $d_{qr}\circ d_{pq}(x) = d_{pr}(x)$ for every triple $p \geq q \geq r$ of $P$ and $x \in D(p)$. \end{itemize}
\end{definition}

To every diagram $\D$, one can associate a topological space via a (homotopy) colimit.

\begin{definition}
The \emph{colimit} of a diagram $\D : P \to \Top$ is the space $$\colim_P \D = \coprod\limits_{p \in P} D(p) \ /  \sim$$ where the relation $\sim$ is generated by $x \sim y$ for each $x \in D(p)$ and $y \in D(q)$ if and only if $d_{pq}(x) = y$.
\end{definition} 

Colimits of diagrams of spaces appear all over in combinatorics and discrete geometry and are very convenient to work with; however, they have a significant drawback, in the sense that they do not behave well with respect to homotopy equivalences, see Appendix A for more details.  For this reason, we use the following object which is slightly more complicated, but much more compatible with homotopy theory.  

\begin{definition}
The \emph{homotopy colimit} of a diagram $\D : P \to \Top$ is the space $$\hocolim_P \D = \coprod\limits_{p \in P} (\Delta(P_{\leq p}) \times D(p)) \ /  \sim$$ where $\sim$ is the transitive closure of the relation $(a,x) \sim (b,y)$ for each $a \in \Delta(P_{\leq p})$, $b \in \Delta(P_{\leq q})$, $x \in D(p)$ and $y \in D(q)$ if and only if $p \geq q$, $d_{pq}(x) = y$, and $a = b$.
\end{definition}

Whenever the spaces in a $P$-diagram $\D$ are metrizable (which is true for every space considered in this paper), the homotopy colimit of $\D$ admits an explicit geometric realization.

\begin{definition}\label{geometric}
For any $P$-diagram $\D$, define ${\cal J}(\D)$ as the join of all spaces in the diagram realized by embedding them in skew affine subspaces.  The points of ${\cal J}(\D)$ can be parametrized as $$\left \{ \sum\limits_{p \in P} t_p x_p \ \right | \left. \ x_p \in D(p), \ 0 \leq t_p \leq 1 \text{ for all } p \in P, \text{ and } \sum\limits_{p \in P} t_p = 1 \right \}.$$  Then the \emph{geometric realization} of $\hocolim_P \D$ consists of the following set of points: $$\{t_0x_0 + ... + t_mx_m \in {\cal J}(\D) \ | \ x_i \in D(p_i),  p_0 \leq ... \leq p_m,  d_{p_{i+1} p_i}(x_{i+1}) = x_i\}.$$
\end{definition}

The benefit of using homotopy colimits, rather than ordinary colimits, is revealed in the following lemma:

\begin{lemma}[\bf Homotopy Lemma, \cite{ZZ93}]\label{HL}
Let $\D$ and $\E$ be $P$-diagrams with homotopy equivalences $\alpha_p : D(p) \to E(p)$ for every $p \in P$ such that the following diagram commutes for all $p > q$.$$\begin{matrix} & & \alpha_p & & \\ & D(p) & \longrightarrow & E(p) & \\ d_{pq} & \downarrow & & \downarrow & e_{pq} \\ & D(q) & \longrightarrow & E(q) & \\ & & \alpha_q & & \end{matrix}$$  Then $\{\alpha_p\}_{p \in P}$ induces a homotopy equivalence $\alpha : \hocolim_P \D \to \hocolim_P \E.$
\end{lemma}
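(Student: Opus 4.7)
The plan is to first construct the induced map $\alpha$ and then prove by induction on $|P|$ that it is a homotopy equivalence, using a pushout decomposition of the homotopy colimit.

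First, define $\alpha$ on $\coprod_{p \in P} \Delta(P_{\leq p}) \times D(p)$ by $(a, x) \mapsto (a, \alpha_p(x))$ for $(a, x) \in \Delta(P_{\leq p}) \times D(p)$. The commutativity hypothesis $\alpha_q \circ d_{pq} = e_{pq} \circ \alpha_p$ is precisely what is needed so that $\alpha$ respects the equivalence relation defining $\hocolim$; hence it descends to a continuous map $\alpha : \hocolim_P \D \to \hocolim_P \E$. The base case $|P| = 1$ is immediate: $\hocolim_P \D \cong D(p_0)$ and $\alpha = \alpha_{p_0}$, which is a homotopy equivalence by hypothesis.

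For the inductive step, pick a maximal element $p^* \in P$ and set $P' = P \setminus \{p^*\}$. The restrictions of $\D, \E$ to $P'$ still satisfy the hypothesis, so by induction the restricted map $\alpha : \hocolim_{P'} \D \to \hocolim_{P'} \E$ is a homotopy equivalence. The key claim is the pushout square
$$\begin{array}{ccc} \Delta(P_{<p^*}) \times D(p^*) & \hookrightarrow & \Delta(P_{\leq p^*}) \times D(p^*) \\ \downarrow & & \downarrow \\ \hocolim_{P'} \D & \longrightarrow & \hocolim_P \D \end{array}$$
where the top map is the inclusion of the subcomplex of chains not containing $p^*$, crossed with $D(p^*)$. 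The left vertical attaching map sends $(a, x)$, with $a$ in the open simplex of a chain $p_0 < \cdots < p_m$ in $P_{<p^*}$, to the class of $(a, d_{p^* p_m}(x)) \in \Delta(P_{\leq p_m}) \times D(p_m)$; the cocycle relations $d_{qr} \circ d_{pq} = d_{pr}$ guarantee this is well-defined across face identifications. The analogous square holds for $\E$, and the strict naturality in $\alpha_p$ makes the two pushout diagrams compatible.

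Since the top inclusion is a CW-subcomplex inclusion (times $D(p^*)$), it is a cofibration. The standard gluing lemma for pushouts along cofibrations then applies: given level-wise homotopy equivalences between such pushout diagrams, the induced map on pushouts is itself a homotopy equivalence. The maps $\text{id} \times \alpha_{p^*}$ on the top two corners and the inductive equivalence on the bottom-left are all homotopy equivalences, so $\alpha : \hocolim_P \D \to \hocolim_P \E$ is a homotopy equivalence, completing the induction. The main technical obstacle is verifying the pushout square itself --- specifically, confirming well-definedness and continuity of the left attaching map from the cocycle identities, and checking that the pushout really recovers $\hocolim_P \D$ by carefully unpacking the equivalence relation in the definition of the homotopy colimit.
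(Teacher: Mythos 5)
The paper does not prove this lemma at all --- it is quoted verbatim from Ziegler--\v{Z}ivaljevi\'c \cite{ZZ93} and used as a black box (the appendix only motivates it with examples). So there is no ``paper proof'' to compare against; what you have written is a free-standing proof, and it is correct. Your argument is the standard elementary one found in the cofibration-category literature: induct on $|P|$, peel off a maximal element $p^*$, exhibit $\hocolim_P \D$ as the pushout of
$\hocolim_{P'}\D \leftarrow \Delta(P_{<p^*})\times D(p^*) \hookrightarrow \Delta(P_{\leq p^*})\times D(p^*)$,
and invoke the gluing lemma for pushouts along (closed Hurewicz) cofibrations. The identification of $\hocolim_P\D$ with this pushout is correct precisely because $p^*$ is maximal, so $P_{\leq p}=P'_{\leq p}$ for every $p\neq p^*$ and all relations involving the $p^*$-piece pass through $\Delta(P_{<p^*})\times D(p^*)$; the cocycle identities $d_{qr}\circ d_{pq}=d_{pr}$ do exactly the work you say in making the attaching map well-defined and continuous across faces.

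Two small points you may want to make explicit if you write this up formally. First, for the gluing lemma you need the full commutative cube to commute \emph{strictly} (not merely up to homotopy); this is guaranteed by the hypothesis $\alpha_q\circ d_{pq}=e_{pq}\circ\alpha_p$ for the side faces, by the definition of the attaching maps for the bottom face, and by the obvious commutativity $\iota\times\alpha_{p^*}$ for the top face --- you gesture at this with ``strict naturality'' but it is worth spelling out. Second, the step ``$\Delta(P_{<p^*})\times D(p^*)\hookrightarrow\Delta(P_{\leq p^*})\times D(p^*)$ is a cofibration'' uses that a CW-subcomplex inclusion is a cofibration and that crossing a cofibration with a fixed CW complex (in the compactly generated category) preserves cofibrations; the paper's standing assumption that the $D(p)$ are CW complexes makes this legitimate, but it is a point-set hypothesis one should flag.
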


Next, we consider the notion of a structure-preserving map between diagrams of spaces.    

\begin{definition}
A morphism $(f,\alpha) : \D \to \E$ of diagrams $\D : I \to \Top$ to $\E : J \to \Top$ is a functor $f : I \to J$ together with a natural transformation $\alpha$ from $\D$ to $\E \circ f$. 
\end{definition}

Our main interest in morphisms of diagrams is that they induce continuous maps between the corresponding homotopy colimits.  The next two results are lifted from \cite{WZZ99}, but are commonly known in the theory of homotopy colimits, see \cite{HV}, \cite{Segal}, and \cite{Vogt}.  For the remainder of this section, let $\D : P \to \Top$ and $\E : Q \to \Top$ be diagrams of spaces over posets $P$ and $Q$.

\begin{proposition} \label{morphdiag}
If $(f,\alpha) : \D \to \E$ is a morphism of diagrams, then $(f,\alpha)$ induces a continuous map $f^* : \hocolim_P \D \to \hocolim_Q \E$.
\end{proposition}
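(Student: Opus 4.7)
The plan is to build $f^*$ by first constructing a map at the level of the disjoint unions $\coprod_{p \in P}\Delta(P_{\leq p})\times D(p) \to \coprod_{q \in Q}\Delta(Q_{\leq q})\times E(q)$ and then verifying that it respects the equivalence relations defining the two homotopy colimits. Continuity will then come for free from the universal property of the quotient topology.

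For the construction, note that since $f : P \to Q$ is a functor between posets, it is order-preserving, so for each $p \in P$ we have $f(P_{\leq p}) \subseteq Q_{\leq f(p)}$, and $f$ induces a simplicial map $\Delta(f)_p : \Delta(P_{\leq p}) \to \Delta(Q_{\leq f(p)})$ by affine extension on simplices. Combined with the component $\alpha_p : D(p) \to E(f(p))$ of the natural transformation, we obtain
$$\Delta(f)_p \times \alpha_p : \Delta(P_{\leq p}) \times D(p) \;\longrightarrow\; \Delta(Q_{\leq f(p)}) \times E(f(p)) \;\hookrightarrow\; \coprod_{q \in Q}\Delta(Q_{\leq q}) \times E(q).$$
Assembling these over $p \in P$ yields a continuous map on the disjoint union, which I will call $\tilde f^*$.

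The main step is to check that $\tilde f^*$ descends to the quotient, i.e.\ that it carries each generating relation in $\hocolim_P \D$ to a relation in $\hocolim_Q \E$. Suppose $p \geq q$ in $P$, $x \in D(p)$, $y = d_{pq}(x) \in D(q)$, and $a \in \Delta(P_{\leq q}) \subseteq \Delta(P_{\leq p})$, so that $(a,x) \sim (a,y)$. Applying $\tilde f^*$ produces $(\Delta(f)_p(a),\alpha_p(x))$ and $(\Delta(f)_q(a),\alpha_q(y))$. Since $\Delta(f)_p$ and $\Delta(f)_q$ agree on the sub-complex $\Delta(P_{\leq q})$, the two simplicial components coincide in $\Delta(Q_{\leq f(p)})$. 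Because $f$ is order-preserving, $f(p)\geq f(q)$; and the naturality of $\alpha$ applied to the morphism $p \geq q$ gives $e_{f(p)f(q)} \circ \alpha_p = \alpha_q \circ d_{pq}$, so that $e_{f(p)f(q)}(\alpha_p(x)) = \alpha_q(y)$. This is precisely the condition needed for the two images to be identified in $\hocolim_Q \E$.

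Thus $\tilde f^*$ factors through the quotient, giving a well-defined set map $f^* : \hocolim_P \D \to \hocolim_Q \E$, and by the universal property of the quotient topology it is continuous. The only real obstacle is bookkeeping of domains, in particular making sure that $a$, viewed in both $\Delta(P_{\leq p})$ and $\Delta(P_{\leq q})$, maps consistently under $\Delta(f)$; but this is automatic because $\Delta(f)$ is defined by applying $f$ to poset elements and extending affinely, and the naturality square for $\alpha$ supplies the compatibility on the $D(p)$ factor. The two conditions together are exactly the hypotheses packaged in the definition of a morphism $(f,\alpha)$ of diagrams.
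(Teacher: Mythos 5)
Your proof is correct and matches the paper's approach: the paper does not give a full argument here (it cites \cite{WZZ99} and related references) but does record the explicit formula $f^*(\lambda_1 p_1 + \cdots + \lambda_k p_k, x) = (\lambda_1 f(p_1) + \cdots + \lambda_k f(p_k), \alpha_p(x))$, which is exactly your map $\Delta(f)_p \times \alpha_p$. Your verification that the generating relations descend via the order-preservation of $f$ and the naturality square for $\alpha$ is the omitted bookkeeping that makes this explicit formula well-defined.
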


In fact, the map $f^*$ is completely explicit:  If one writes each point in $\Delta(P_{\leq p}) \times D(p)$ as $(\lambda_1 p_1 + \cdots + \lambda_k p_k , x)$ where $p_1 \leq p_2 \leq \cdots \leq p_k = p$, $\lambda_i \geq 0$, $\sum_i \lambda_i = 1$, and $x \in D(p)$, then $$f^*(\lambda_1 p_1 + \cdots + \lambda_k p_k , x) = (\lambda_1 f(p_1) + \cdots + \lambda_k f(p_k) , \alpha_p(x)).$$  There is a simple criterion for when two morphisms of diagrams induce homotopic maps between the corresponding homotopy colimits.

\begin{proposition} \label{morph_hom}
If $(f,\alpha),(g,\beta): \D \to \E$ are morphisms of diagrams such that there exists a natural transformation $\gamma : f \to g$ satisfying $E(\gamma) \circ \alpha = \beta$, then $f^* \simeq g^*$.
\end{proposition}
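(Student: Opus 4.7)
The plan is to package the natural transformation $\gamma$ into a single morphism of diagrams whose source is a ``cylinder'' built on $\D$, and then invoke Proposition \ref{morphdiag}. Let $[1]$ denote the two-element poset $\{0 < 1\}$ and form the product poset $P \times [1]$. Define a diagram $\widetilde{\D} : P \times [1] \to \Top$ by $\widetilde{\D}(p, i) = D(p)$, with connecting maps $d_{pq}$ in the $P$-direction and identities in the $[1]$-direction.

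Assemble $(f, \alpha)$, $(g, \beta)$, and $\gamma$ into a morphism $(\mathcal{F}, \eta) : \widetilde{\D} \to \E$ as follows. Let $\mathcal{F} : P \times [1] \to Q$ be the functor sending $(p, 1) \mapsto f(p)$, $(p, 0) \mapsto g(p)$, and the unique arrow $(p, 1) \to (p, 0)$ to $\gamma_p : f(p) \to g(p)$; let $\eta_{(p, 1)} = \alpha_p$ and $\eta_{(p, 0)} = \beta_p$. Naturality of $\eta$ in the $P$-direction is immediate from the fact that $(f, \alpha)$ and $(g, \beta)$ are already morphisms of diagrams, while naturality at each arrow $(p, 1) \to (p, 0)$ reduces to the identity $e_{f(p) g(p)} \circ \alpha_p = \beta_p$, which is precisely the hypothesis $E(\gamma) \circ \alpha = \beta$. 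Proposition \ref{morphdiag} then produces a continuous map
$$\mathcal{F}^* : \hocolim_{P \times [1]} \widetilde{\D} \to \hocolim_Q \E.$$

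The remaining task is to identify $\hocolim_{P \times [1]} \widetilde{\D}$ with the cylinder $\hocolim_P \D \times [0, 1]$ so that the restrictions of $\mathcal{F}^*$ to the levels $t = 1$ and $t = 0$ are the explicit formulas for $f^*$ and $g^*$ stated after Proposition \ref{morphdiag}. Since the connecting maps of $\widetilde{\D}$ in the $[1]$-direction are identities, a standard Fubini-style argument for homotopy colimits (or a direct computation on the geometric realization from Definition \ref{geometric}) supplies such an identification; composing with $\mathcal{F}^*$ then yields the desired homotopy $f^* \simeq g^*$.

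The main obstacle is this last identification: the order complex of $(P \times [1])_{\leq (p, 1)}$ is the nerve of a prism rather than a literal product $\Delta(P_{\leq p}) \times [0, 1]$, so one must triangulate carefully and appeal to the Homotopy Lemma (Lemma \ref{HL}) to pass between the two descriptions. Once this bookkeeping is done, matching the endpoint restrictions of $\mathcal{F}^*$ with $f^*$ and $g^*$ is a routine check on simplicial coordinates.
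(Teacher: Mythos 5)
The paper does not actually prove this proposition; it is quoted (together with Proposition~\ref{morphdiag}) as a standard result lifted from \cite{WZZ99}, so there is no in-text argument to compare against. Your cylinder proof is the standard one and is essentially correct: the functor $\mathcal{F}$ is order-preserving (the mixed case $(p,1)\ge(q,0)$ follows from $f(p)\ge g(p)\ge g(q)$), naturality of $\eta$ along the arrow $(p,1)\to(p,0)$ is exactly the hypothesis $E(\gamma)\circ\alpha=\beta$, and the explicit coordinate formula following Proposition~\ref{morphdiag} shows the two ends of the cylinder are carried by $f^*$ and $g^*$. The one thing to adjust is your framing of the final identification. In the model of Definition~\ref{geometric}, the pieces of $\hocolim_{P\times[1]}\widetilde{\D}$ indexed by $(p,0)$ are absorbed into those indexed by $(p,1)$, and each remaining piece $\Delta\bigl((P\times[1])_{\le(p,1)}\bigr)\times D(p)=\Delta(P_{\le p}\times[1])\times D(p)$ is carried to $\Delta(P_{\le p})\times[0,1]\times D(p)$ by the standard staircase triangulation of the prism; the gluing relations match up under this correspondence, so $\hocolim_{P\times[1]}\widetilde{\D}$ is \emph{homeomorphic} (not merely homotopy equivalent) to $\hocolim_P\D\times[0,1]$. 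A direct computation therefore suffices, and the Homotopy Lemma (Lemma~\ref{HL}) is neither the right tool for this step --- it compares two diagrams over the \emph{same} poset, not a $P\times[1]$-diagram with a $P$-diagram --- nor needed.
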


\begin{remark}\label{morph_hom_rem} Whenever the components of $\alpha$ and $\beta$ are inclusions and the maps in $\D$ and $\E$ are also inclusions, any pair $f,g : P \to Q$ where $f(p) \geq g(p)$ for every $p \in P$ satisfies the conditions of Proposition \ref{morph_hom} and thus, $f^* \simeq g^*$.    
\end{remark}

In order to compare homotopy colimits of diagrams over different posets, we will use the following variation of the Upper Fiber Lemma \cite{WZZ99}, which is a generalization of Quillen's Fiber lemma \cite{Bjorner95}. First, we establish a ``gluing lemma'':

\begin{lemma}\label{GL}
Let $X$ and $Y$ be CW complexes and let $A,B \subseteq X$ and $C,D \subseteq Y$ be subcomplexes such that $X = A \cup B$ and $Y = C \cup D$.  If $f : X \to Y$ is a continuous map satisfying \begin{itemize}\item $f(A) \subseteq C$ and $f(B) \subseteq D$, \item $f|_A$, $f|_B$ induce surjections, $\tilde{f_A} : H_*(A) \twoheadrightarrow H_*(C)$, $\tilde{f_B}:H_*(B) \twoheadrightarrow H_*(D)$, \end{itemize} then $f$ induces a surjection, $\tilde{f}: H_*(X) \twoheadrightarrow H_*(Y)$.
\end{lemma}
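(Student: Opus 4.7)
The plan is to apply the Mayer--Vietoris long exact sequences associated to the CW triads $(X;A,B)$ and $(Y;C,D)$ and exploit their naturality under $f$. Since $A,B,C,D$ are subcomplexes with $X=A\cup B$ and $Y=C\cup D$, both triads are excisive and Mayer--Vietoris applies. The hypotheses $f(A)\subseteq C$ and $f(B)\subseteq D$ imply $f(A\cap B)\subseteq f(A)\cap f(B)\subseteq C\cap D$, so $f$ induces a morphism of triads and hence a commutative ladder between the two long exact sequences, joined column-wise by the homology maps induced by $f$, $f|_A$, $f|_B$, and $f|_{A\cap B}$. The second bullet hypothesis gives surjectivity of the direct-sum column $H_*(A)\oplus H_*(B)\twoheadrightarrow H_*(C)\oplus H_*(D)$.

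With the ladder in place, I would run a five-lemma-style diagram chase to push surjectivity from the $A,B$ columns to the $X,Y$ column. Given a class $[y]\in H_n(Y)$, pick a representative and decompose it as a sum of chains in $C$ and in $D$; its image under the connecting homomorphism lies in $H_{n-1}(C\cap D)$. Using surjectivity of $\tilde f_A$ and $\tilde f_B$, produce compatible lifts in $H_{n-1}(A)$ and $H_{n-1}(B)$ of the images of this obstruction class in $H_{n-1}(C)$ and $H_{n-1}(D)$, then invoke exactness of the top row together with surjectivity of the middle column to assemble a class in $H_n(X)$ whose image in $H_n(Y)$ differs from $[y]$ only by an element already in the image of $H_n(A)\oplus H_n(B)$; this residual discrepancy is then eliminated by the middle-column surjectivity.

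The main obstacle is the behavior of the intersection. A textbook five-lemma would demand a surjection $H_*(A\cap B)\twoheadrightarrow H_*(C\cap D)$, which is not among the hypotheses, so the chase has to avoid lifting directly through the intersection and instead convert the existence of compatible lifts at $A$ and $B$ into a lift at $X$ via exactness of the bottom row. Structurally, this is a truncated Mayer--Vietoris spectral-sequence comparison: surjectivity on the two ``patch'' entries of the $E_1$ page propagates to surjectivity on the abutment in total degrees, and the delicate bookkeeping needed to sidestep the intersection term is the only nontrivial point.
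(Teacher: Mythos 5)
You have correctly located the crux: the naturality ladder between the two Mayer--Vietoris sequences does not support a five-lemma argument, because nothing is assumed about the map $H_*(A\cap B)\to H_*(C\cap D)$ (and, one entry further along, about injectivity of $H_{*-1}(A)\oplus H_{*-1}(B)\to H_{*-1}(C)\oplus H_{*-1}(D)$). Where the proposal goes wrong is in asserting that this obstruction can be ``sidestepped'' by careful bookkeeping or a spectral-sequence comparison. That claim is not substantiated, and in fact it cannot be, because the statement as written is false. Take $X=[0,1]$ with $A=[0,\tfrac12]$, $B=[\tfrac12,1]$, let $Y=S^1$ covered by two closed arcs $C,D$ meeting in two points, and let $f\colon X\to Y$ be the cellular quotient map identifying $0\sim1$ so that $f(A)=C$ and $f(B)=D$. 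Then $f|_A$ and $f|_B$ are homeomorphisms of contractible arcs and induce surjections (indeed isomorphisms) on all homology, yet $\tilde f\colon H_1(X)=0\to H_1(S^1)=\mathbb{Z}$ is not surjective. Surjectivity on the two patches does not propagate to the abutment; the cokernel of $H_*(C)\oplus H_*(D)\to H_*(Y)$ is exactly what the connecting map to $H_{*-1}(C\cap D)$ detects, and the hypotheses say nothing about the intersections.

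For comparison, the paper's own proof takes a shortcut through the same commutative square: it asserts that the Mayer--Vietoris difference map $\delta\colon H_*(C)\oplus H_*(D)\to H_*(Y)$ is a surjection, and then concludes by composing with the surjection $(\tilde f_A,\tilde f_B)$. But $\delta$ is surjective only onto the kernel of the connecting homomorphism, so that step is unjustified and fails in the example above. In short, you diagnosed the problem the paper glossed over, but then mirrored the paper's error by declaring it solvable; what is actually missing is an additional hypothesis (e.g.\ a surjection on $H_*(A\cap B)\to H_*(C\cap D)$ combined with an injectivity condition further along the ladder, or the vanishing of the relevant connecting maps) without which the lemma does not hold.
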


\begin{proof}
Consider the induced chain map on the Mayer-Vietoris sequences of $X = A \cup B$ and $Y = C \cup D$.  We get the following commutative diagram: $$\begin{matrix} & H_*(A) \oplus H_*(B) & \overset{\delta}{\longrightarrow} & H_*(X) & \\ {\tiny (\tilde{f_A},\tilde{f_B})} & \downarrow \ \ \ \ \ \ \ \ \ \ \downarrow & & \downarrow & \tilde{f} \\ & H_*(C) \oplus H_*(D) & \underset{\delta}{\longrightarrow} & H_*(Y) & \end{matrix}$$ where $\delta$ is the standard difference map.  Since $\tilde{f_A}$, $\tilde{f_B}$, and $\delta$ are surjections, $\delta \circ (\tilde{f_A},\tilde{f_B}) = \tilde{f} \circ \delta$ is a surjection and hence, $\tilde{f}$ is a surjection too.  
\end{proof}

\begin{lemma}\label{VUFL}
Let $(f,\alpha) : \D \to \E$ be a morphism of diagrams.  If $f^*$ induces a surjection $$H_*(\hocolim_{\{p \in P : f(p) \geq q\}} \D) \twoheadrightarrow H_*(\hocolim_{Q_{\geq q}} \E)$$ for each $q \in Q$, then $f^*$ induces a surjection $H_*(\hocolim_P \D) \twoheadrightarrow H_*(\hocolim_Q \E)$.
\end{lemma}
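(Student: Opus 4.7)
I would induct on $|Q|$, with the inductive step glued together via Lemma \ref{GL}. The base case $|Q| = 1$ is immediate: if $Q = \{q\}$, then $\hocolim_Q \E = E(q)$ and $\{p : f(p) \geq q\} = P$, so the hypothesis at $q$ is exactly the desired surjection.

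For the inductive step, pick a minimal element $q_0 \in Q$; the crucial consequence of minimality is that $q_0 \not\geq q'$ for every $q' \neq q_0$. Set $X = \hocolim_P \D$ and $Y = \hocolim_Q \E$, and view every homotopy colimit over a subposet as the subcomplex of the full hocolim whose cells are indexed by chains supported in that subposet. Inside $Y$, take $C = \hocolim_{Q \setminus \{q_0\}} \E$ and $D = \hocolim_{Q_{\geq q_0}} \E$; inside $X$, take $A = \hocolim_{P \setminus f^{-1}(q_0)} \D$ and $B = \hocolim_{f^{-1}(Q_{\geq q_0})} \D$. The chain descriptions immediately give $f^*(A) \subseteq C$ and $f^*(B) \subseteq D$. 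The decompositions $X = A \cup B$ and $Y = C \cup D$ both follow from minimality: any chain in $Q$ that contains $q_0$ must lie in $Q_{\geq q_0}$, because every other vertex of the chain is comparable to $q_0$ and cannot be strictly below it, and applying $f$ yields the analogous statement for chains in $P$ meeting $f^{-1}(q_0)$.

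It remains to show that $f^*|_A$ and $f^*|_B$ each surject on homology, after which Lemma \ref{GL} concludes. The map $f^*|_B : \hocolim_{f^{-1}(Q_{\geq q_0})} \D \to \hocolim_{Q_{\geq q_0}} \E$ is precisely the $q = q_0$ case of the lemma's hypothesis. For $f^*|_A$, I would apply the inductive hypothesis to the restricted morphism $\D|_{P \setminus f^{-1}(q_0)} \to \E|_{Q \setminus \{q_0\}}$, whose target poset has strictly smaller cardinality. The hypothesis at $q' \in Q \setminus \{q_0\}$ for this restricted morphism concerns $(Q \setminus \{q_0\})_{\geq q'}$ and $\{p \in P \setminus f^{-1}(q_0) : f(p) \geq q'\}$; using $q_0 \not\geq q'$, both collapse to $Q_{\geq q'}$ and $\{p : f(p) \geq q'\}$ respectively, so the original hypothesis at $q'$ transfers verbatim. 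The main subtlety will lie entirely in this bookkeeping, together with justifying that the hocolim over a subposet sits inside the ambient hocolim as a bona fide CW subcomplex so that Lemma \ref{GL} is legitimately applicable.
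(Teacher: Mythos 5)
Your proof is correct and takes essentially the same approach as the paper's: induction on $|Q|$, decomposing both homotopy colimits around a minimal element of $Q$ and invoking Lemma \ref{GL} to glue the two surjections. The only differences are expository (you invoke the hypothesis at $q_0$ directly for the $B$ piece where the paper cites induction, and you spell out the hypothesis-transfer bookkeeping that the paper leaves implicit), but the decompositions and the overall structure are identical.
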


\begin{proof}
This proof follows that of Theorem 3.8 in \cite{WZZ99}.  We induct on $|Q|$.  If either $|Q| = 1$ or there is a unique minimal element in $Q$, the statement is trivial.  So, suppose that $q \in Q$ is one of several minimal elements $Q$.  Then write $Q = Q_{\geq q} \cup Q \setminus \{q\}$ and $P = f\inv(Q_{\geq q}) \cup f\inv(Q \setminus \{q\})$.  By induction, $f^*$ induces surjections $H_*(\hocolim_{f\inv(Q_{\geq q})} \D) \twoheadrightarrow H_*(\hocolim_{Q_{\geq q}} \E)$ and $H_*(\hocolim_{f\inv(Q \setminus \{q\})} \D) \twoheadrightarrow H_*(\hocolim_{Q \setminus \{q\}} \E)$.  Since $\hocolim_P \D = \hocolim_{f\inv(Q_{\geq q})} \D \cup \hocolim_{f\inv(Q \setminus \{q\})} \D$ and $\hocolim_Q \E = \hocolim_{Q_{\geq q}} \E \cup \hocolim_{Q \setminus \{q\}} \E$, the result follows from Lemma \ref{GL}.
\end{proof}

\subsection{Topological Representations of Matroids}

In this section, we review Engstr\"{o}m's construction.  We begin with some more notation:

\begin{definition}
Given a CW complex $X$, an \emph{$X$-arrangement} consists of a CW complex $Y$ and a finite collection $\bA$ of subcomplexes of $Y$ such that: \begin{enumerate} \item The complex $Y$ is homotopy equivalent to $X^{*d}$ for some $d$, and $\dim(Y) = \dim(X^{*d})$. \item Each complex $A$ in $\bA$ is homotopy equivalent to $X^{*(d-1)}$ and $\dim(A) = \dim(X^{*(d-1)})$. \item Each intersection $B$ of complexes in $\bA$ is homotopy equivalent to some $X^{*e}$, and $\dim(B) = \dim(X^{*e})$. \item Every free action of a group $\Gamma$ on $X$ induces a free action of $\Gamma$ on $Y$ and every intersection of complexes in $\bA$. \item If $B \simeq X^{*e}$ is an intersection of complexes in $\bA$, the complex $A$ is in $\bA$, and $A \nsupseteq B$, then $A \cap B \simeq X^{*(e-1)}$. \end{enumerate} 
\end{definition}

Given an $X$-arrangement, one can always obtain the lattice of flats of a matroid in the following way.  A subset $\{A_1, A_2, ... , A_n\}$ of $\bA$ is a \emph{flat} in the arrangement if no subcomplex $B$ in $\bA \setminus \{A_1, A_2, ... , A_n\}$ contains $\displaystyle \bigcap_{i = 1}^{n} A_i.$

\begin{proposition}[\bf \cite{Engstrom10}]
If $(Y,\bA)$ is an $X$-arrangement, then the flats of $\bA$ are the flats of some matroid.
\end{proposition}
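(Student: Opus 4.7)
My plan is to prove that the poset $L(\bA)$ of flats of $\bA$, ordered by set inclusion, is a geometric lattice; Theorem 1.7.5 cited earlier will then immediately yield the desired matroid. The central observation is that the assignment $\phi \colon F \mapsto \bigcap_{A \in F} A$ (with $\phi(\emptyset) = Y$) gives an order-reversing bijection between $L(\bA)$ and the intersection subcomplexes arising from $\bA$, so that $F_1 \subseteq F_2$ if and only if $\phi(F_1) \supseteq \phi(F_2)$. This bijection is the device through which every lattice-theoretic statement about $L(\bA)$ gets translated into a geometric statement about intersection subcomplexes, where the axioms of an $X$-arrangement can be applied.

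With the bijection in hand, I would first verify that $L(\bA)$ is a lattice: the meet $F_1 \wedge F_2$ is $F_1 \cap F_2$, which is a flat because any $B \in \bA$ containing $\bigcap_{A \in F_1 \cap F_2} A$ in particular contains both $\phi(F_1)$ and $\phi(F_2)$ and must therefore belong to both $F_1$ and $F_2$; the join $F_1 \vee F_2$ is the flat $\{B \in \bA : B \supseteq \phi(F_1) \cap \phi(F_2)\}$. Next, I would define the rank by $\rk(F) = d - e$ whenever $\phi(F) \simeq X^{*e}$, which is well-defined by property (3) of an $X$-arrangement. For atom-generation, note that for each $A \in \bA$, the set $F_A = \{B \in \bA : B \supseteq A\}$ is a flat with $\phi(F_A) = A \simeq X^{*(d-1)}$ by property (2), so $\rk(F_A) = 1$; and any flat $F$ is the join $\bigvee_{A \in F} F_A$ because both sides correspond to the same intersection subcomplex $\phi(F)$ under $\phi$.

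The heart of the proof, and its main obstacle, is semimodularity. I would verify it using the equivalent cover-relation formulation: if $F_1$ and $F_2$ both cover $F := F_1 \wedge F_2$, then $F_1 \vee F_2$ covers both $F_1$ and $F_2$. Writing $\phi(F) \simeq X^{*e}$ and picking any $A \in F_1 \setminus F$, one first identifies $\phi(F_1) = A \cap \phi(F)$, since the cover relation forces $F_1$ to be the flat closure of $F \cup \{A\}$; property (5) then gives $\phi(F_1) \simeq X^{*(e-1)}$ and $\rk(F_1) = \rk(F)+1$, as expected. Since $A \notin F_2$, we have $A \nsupseteq \phi(F_2) \simeq X^{*(e-1)}$, and a second application of property (5) yields $\phi(F_1 \vee F_2) = A \cap \phi(F_2) \simeq X^{*(e-2)}$, giving $\rk(F_1 \vee F_2) = \rk(F_1)+1$ as required. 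The delicate part throughout is the combinatorial bookkeeping needed to identify intersection subcomplexes with the correct flat closures and to confirm the strict non-containments that license property (5); once these identifications are in place, $L(\bA)$ satisfies every axiom of a geometric lattice, and Theorem 1.7.5 finishes the proof.
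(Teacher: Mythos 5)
The paper states this proposition as a citation to \cite{Engstrom10} and gives no proof of its own, so there is nothing in the present paper to compare against. Evaluated on its own terms, your argument is correct and follows what is essentially the natural route: show that the poset of flats of $\bA$, ordered by inclusion, is a geometric lattice via the order-reversing correspondence $F \mapsto \phi(F)$, and then invoke Theorem 1.7.5. The meet-as-intersection and join-as-flat-closure observations, the rank $\rk(F) = d - e$ read off from property (3), the identification of atoms $F_A$ with single subcomplexes, and the use of Birkhoff's covering condition in place of the rank inequality are all appropriate and fit together.

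Three small points of bookkeeping are worth making explicit. First, well-definedness of the rank relies on the dimension clause in property (3), not the homotopy type alone: if $X$ is contractible then every $X^{*e}$ is contractible, but $\dim(X^{*e})$ still determines $e$. Second, in the semimodularity step you choose $A \in F_1 \setminus F$ and then use $A \notin F_2$; since $F = F_1 \wedge F_2 = F_1 \cap F_2$, one has $F_1 \setminus F = F_1 \setminus F_2$, so this is automatic, but it deserves a line. Third, passing from the computation $\rk(F_1 \vee F_2) = \rk(F_1) + 1$ to the conclusion that $F_1 \vee F_2$ covers $F_1$ requires first noting that $\rk$ is strictly order-preserving and that covers raise rank by exactly one (both follow from property (5) applied to a single new subcomplex $A$, which gives $\phi(F') = A \cap \phi(F) \simeq X^{*(e-1)}$ for any cover $F' \supsetneq F$), so that the flat poset is graded by $\rk$. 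You flag this as bookkeeping, and indeed it all goes through.
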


Engstr\"{o}m showed that the converse is also true.  His theorem consists of two parts:  (1) it states that any matroid can be represented as an $X$-arrangement for any ``nice'' CW complex $X$; and (2) it shows how to compute the homotopy type of the resulting space.  

\begin{theorem}[\bf Engstr\"om's Representation Theorem for Matroids, \cite{Engstrom10}]\label{TopRepMat}
Let $M$ be a rank $r$ matroid, and $l$ a rank- and order-reversing poset map from $\lat(M)$ to $B_r$, the boolean lattice on $[r]$.  Let $X$ be a locally finite, regular $CW$ complex and for each $\sigma \in B_r$ define $$D_X(\sigma) = *_{i=1}^r \begin{cases} X & \text{if $i \in \sigma$} \\ \emptyset & \text{if $i \notin \sigma$} \end{cases}$$ to get a $B_r$-diagram $\D_X$ with inclusion morphisms.  

Define the $\lat(M)$-diagram $\D_X(M,l) = \D_X \circ l$.  Then $$(Y,{\mathbf A}) = (\hocolim \D_X(M,l), \{\hocolim \D_X(M,l)_{\geq a} \ | \ a \text{ is an atom of } \lat(M)\})$$ is an $X$-arrangement of $\lat(M)$ and $$\Sph_X(M,l) := \bigcup_{A \in {\mathbf A}} A \simeq \bigvee_{p \in \lat(M) \setminus \hat{0}} \left( X^{*(r-\rk(p))} * \bigvee^{|\mu(\hat{0},p)|} S^{\rk(p) - 2} \right).$$  \end{theorem}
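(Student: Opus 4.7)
The plan is to verify the five axioms making $(Y,\mathbf{A})$ an $X$-arrangement, and then to compute the homotopy type of $\Sph_X(M,l)$ by combining a Wedge Lemma for homotopy colimits with Folkman's classical theorem on geometric lattices. The crucial observation throughout is that, since $l$ is rank- and order-reversing, $|l(p)| = r - \rk(p)$ for every $p \in \lat(M)$, and hence $D_X(l(p)) = X^{*(r-\rk(p))}$; moreover, the diagram maps $D_X(l(p)) \hookrightarrow D_X(l(q))$ for $p > q$ are inclusions of sub-joins, which are cellular cofibrations because $X$ is locally finite and regular.

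For axioms (1)--(3), I would apply the following fact (a consequence of the Homotopy Lemma \ref{HL} together with the standard mapping-cylinder retraction): if $P$ is a finite poset with minimum $p_0$ and $\D$ is a $P$-diagram of CW complexes whose structure maps are cofibrations, then $\hocolim_P \D$ deformation-retracts onto $D(p_0)$. Applied to each principal filter $\lat(M)_{\geq p_0}$, this yields
$$\hocolim_{\lat(M)_{\geq p_0}}\D_X(M,l)\;\simeq\;X^{*(r-\rk(p_0))}.$$
Specializing $p_0 = \hat{0}$ gives $Y \simeq X^{*r}$; $p_0 = a$ an atom gives $A_a \simeq X^{*(r-1)}$; and identifying $\bigcap_{a \in S} A_a$ with the hocolim over $\lat(M)_{\geq \bigvee_{a \in S} a}$ gives axiom (3). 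Axiom (5) follows from semimodularity of $\lat(M)$: if an atom $a$ is not below $p$, then $a \vee p$ covers $p$, so $\rk(a \vee p) = \rk(p) + 1$, and the intersection's rank drops by exactly one. Axiom (4) is immediate from the factor-wise $\Gamma$-action on joins.

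For the homotopy type of $\Sph_X(M,l)$, note that $\Sph_X(M,l) = \bigcup_a A_a = \hocolim_{\lat(M) \setminus \hat{0}} \D_X(M,l)$, since $\lat(M) \setminus \hat{0}$ is covered by the filters above the atoms. I would then invoke the Wedge Lemma of Ziegler--\v{Z}ivaljevi\'c \cite{ZZ93}, whose cofibration hypotheses are met by our regularity assumptions, to decompose
$$\hocolim_{\lat(M) \setminus \hat{0}} \D_X(M,l) \;\simeq\; \bigvee_{p > \hat{0}} \Delta\bigl((\hat{0},p)\bigr) * D_X(l(p)).$$
By Folkman's theorem, the order complex of the open interval $(\hat{0}, p)$ in the geometric lattice $\lat(M)$ is homotopy equivalent to a wedge of $|\mu(\hat{0},p)|$ copies of $S^{\rk(p)-2}$. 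Substituting this together with $D_X(l(p)) = X^{*(r-\rk(p))}$ produces the claimed wedge formula.

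The main obstacles I anticipate are twofold. First, invoking the correct form of the Wedge Lemma and carefully verifying that its cofibration and CW hypotheses hold in this setting is delicate; this is precisely where the local finiteness and regularity of $X$ are essential, since they guarantee that the inclusions of sub-joins $D_X(\sigma) \hookrightarrow D_X(\tau)$ are cellular cofibrations compatible with the diagram structure. Second, the identification of $\bigcap_{a \in S} A_a$ inside $Y$ with the hocolim over the principal filter above $\bigvee_{a \in S} a$ requires careful bookkeeping using that $l$ converts lattice joins of atoms in $\lat(M)$ into intersections of their images in $B_r$ (a consequence of rank-reversal combined with the semimodular rank inequality in $\lat(M)$).
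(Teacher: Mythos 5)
The paper does not prove this statement. It is Engstr\"om's Representation Theorem, quoted from \cite{Engstrom10}; the paragraph following the theorem in the paper explicitly notes that Engstr\"om's own proof computes the homotopy type of $\T_X(M,l)$ by \emph{discrete Morse theory}, which is the reason the local finiteness and regularity hypotheses on $X$ appear in the statement. There is therefore no in-paper proof to compare your argument against.

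Your sketch follows a genuinely different route --- the Wedge Lemma of Ziegler--\v{Z}ivaljevi\'c plus Folkman's theorem in place of discrete Morse theory --- and the overall plan (contract principal-filter hocolims onto their bottom values to verify the arrangement axioms, then decompose $\Sph_X(M,l) = \hocolim_{\lat(M)\setminus\hat{0}} \D_X(M,l)$ as a wedge indexed by $\lat(M)\setminus\hat{0}$) is in keeping with the homotopy-colimit toolbox of \cite{ZZ93,WZZ99}. However, the one place that genuinely needs work before this could be considered a proof is the appeal to the Wedge Lemma. That lemma asks for the structure maps of the diagram to factor through basepoints, or for the diagram to be naturally homotopy equivalent to one in which they do, and your maps are the sub-join inclusions $X^{*a}\hookrightarrow X^{*b}$. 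Each such inclusion is individually null-homotopic when $a<b$, but a \emph{compatible} family of contractions across the whole poset is not automatic, and at the top of the lattice $D_X(l(\hat{1})) = X^{*0} = \emptyset$ has no basepoint at all --- precisely the spot where a wedge decomposition is delicate. You would need to state exactly which form of the Wedge Lemma you are invoking and check its hypotheses there. A second, minor point: your claim that $l$ ``converts lattice joins of atoms into intersections of their images in $B_r$'' is false in general --- a rank- and order-reversing $l$ need not separate atoms; indeed Example~\ref{explicit} has $l(\{3\}) = l(\{4\})$ --- and it is also unnecessary, since the identity $\bigcap_{a\in S} A_a = \hocolim_{\lat(M)_{\geq \bigvee_{a\in S} a}} \D_X(M,l)$ is a purely combinatorial consequence of the geometric-realization description in Definition~\ref{geometric} (a point lies in a sub-hocolim precisely when its supporting chain does) and does not involve $l$.
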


We call the space $\T_X(M,l)$ an \emph{Engstr\"om representation} of the pair $(M,l)$.  The regularity and finiteness conditions on $X$ are only needed for computing the homotopy type of $\T_X(M,l)$ with discrete Morse theory, \cite{Engstrom10}.  For the rest of the paper, we omit these conditions, but continue to assume $X$ is reasonably nice. 

\begin{remark}\label{SimpForm}
For each $p \in \lat(M) \setminus \hat{0}$, the space $X^{*(r-\rk(p))} * S^{\rk(p) - 2}$ depends only on the rank of $p$.  We rewrite the formula for the homotopy type of $\T_X(M,l)$ as follows: 
\begin{align*}
\T_X(M,l) &\simeq \bigvee_{p \in \lat(M) \setminus \hat{0}} \left( \bigvee^{|\mu(\hat{0},p)|} (X^{*(r-\rk(p))} * S^{\rk(p) - 2}) \right) \\ 
&\simeq \bigvee_{i = 1}^{r} \left(\bigvee^{w_i(M)} \left(X^{*(r-i)} * S^{i - 2}\right) \right) \\
&\simeq \bigvee_{i = 1}^{r} \left(\bigvee^{w_i(M)} \Susp^{i-1}\left(X^{*(r-i)}\right) \right)
\end{align*}
where $\Susp^k(Y)$ is the $k$-fold suspension of $Y$.
\end{remark}

\begin{example}\label{explicit}
Here we work out an explicit example for the construction in Theorem \ref{TopRepMat} on the matroid $M = ([5],\I)$ where $$\I = \left\{ \begin{matrix} \emptyset, \{1\}, \{2\}, \{3\}, \{4\}, \{5\}, \{1,3\}, \{1,4\}, \{1,5\}, \{2,3\}, \{2,4\}, \{2,5\} \\ \{3,4\}, \{3,5\}, \{4,5\}, \{1,3,5\}, \{1,4,5\}, \{2,3,5\}, \{2,4,5\}, \{3,4,5\} \end{matrix} \right\}.$$  The flats of $M$ are $$\F = \left\{ \begin{matrix} \emptyset, \{1,2\}, \{3\}, \{4\}, \{5\}, \{1,2,3,4\}, \{1,2,5\}, \{3,5\}, \{4,5\}, [5] \end{matrix} \right\}.$$  Since $M$ has rank three, we define a poset map $l : \lat(M) \to B_3$ by $l(\{1,2\}) = \{1,2\}$, $l(\{3\}) = l(\{4\}) = \{1,3\}$, and $l(\{5\}) = \{2,3\}$.  The Hasse diagram of $\lat(M)$ is drawn in Figure 1 decorated with triples of dots representing joins of spaces.  Shaded dots correspond to $X$ and unshaded dots correspond to $\emptyset$. 

\begin{figure}[h] \label{hasse_ex}
\begin{center} \includegraphics[width = 0.65\textwidth]{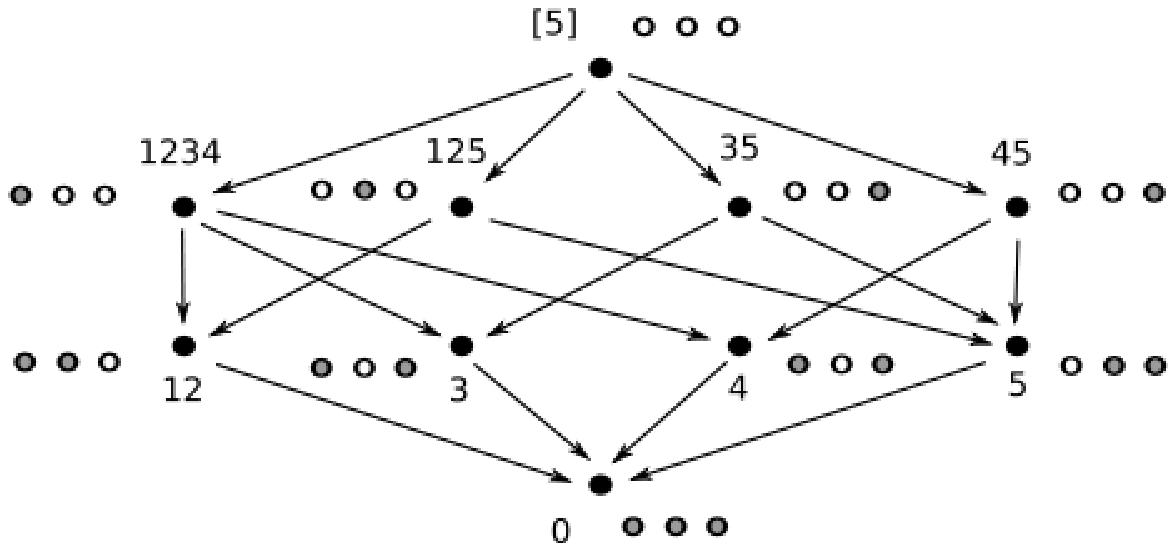} \end{center}
\caption{Decorated Hasse diagram of $\lat(M)$}
\end{figure}

The space $\T_X(M,l)$ with $X = S^0$ is drawn in Figure 2.
 
\begin{figure}[!h]
\begin{center} \includegraphics[width = 0.45\textwidth]{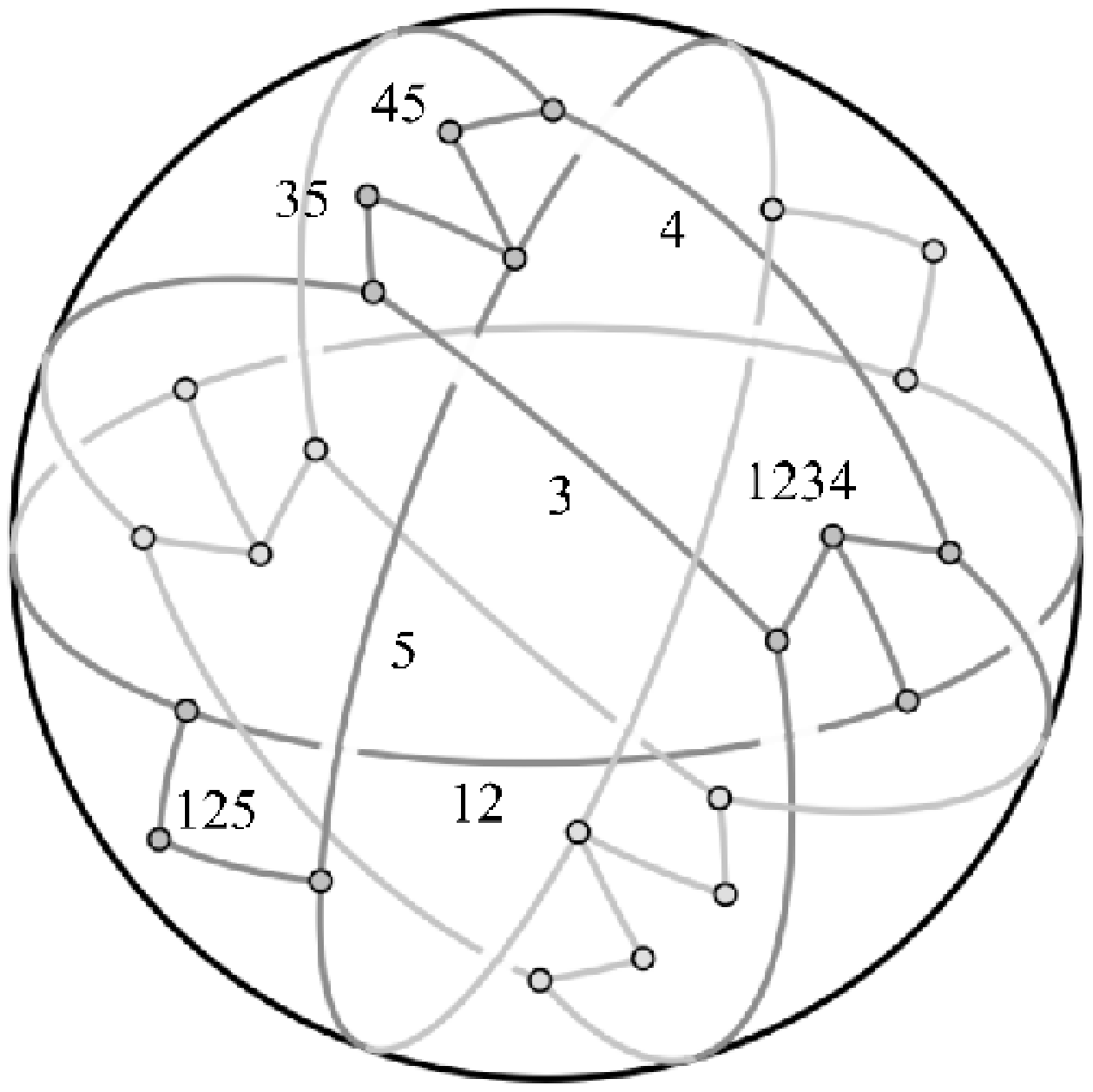} \hspace{0.05\textwidth} \includegraphics[width = 0.45\textwidth]{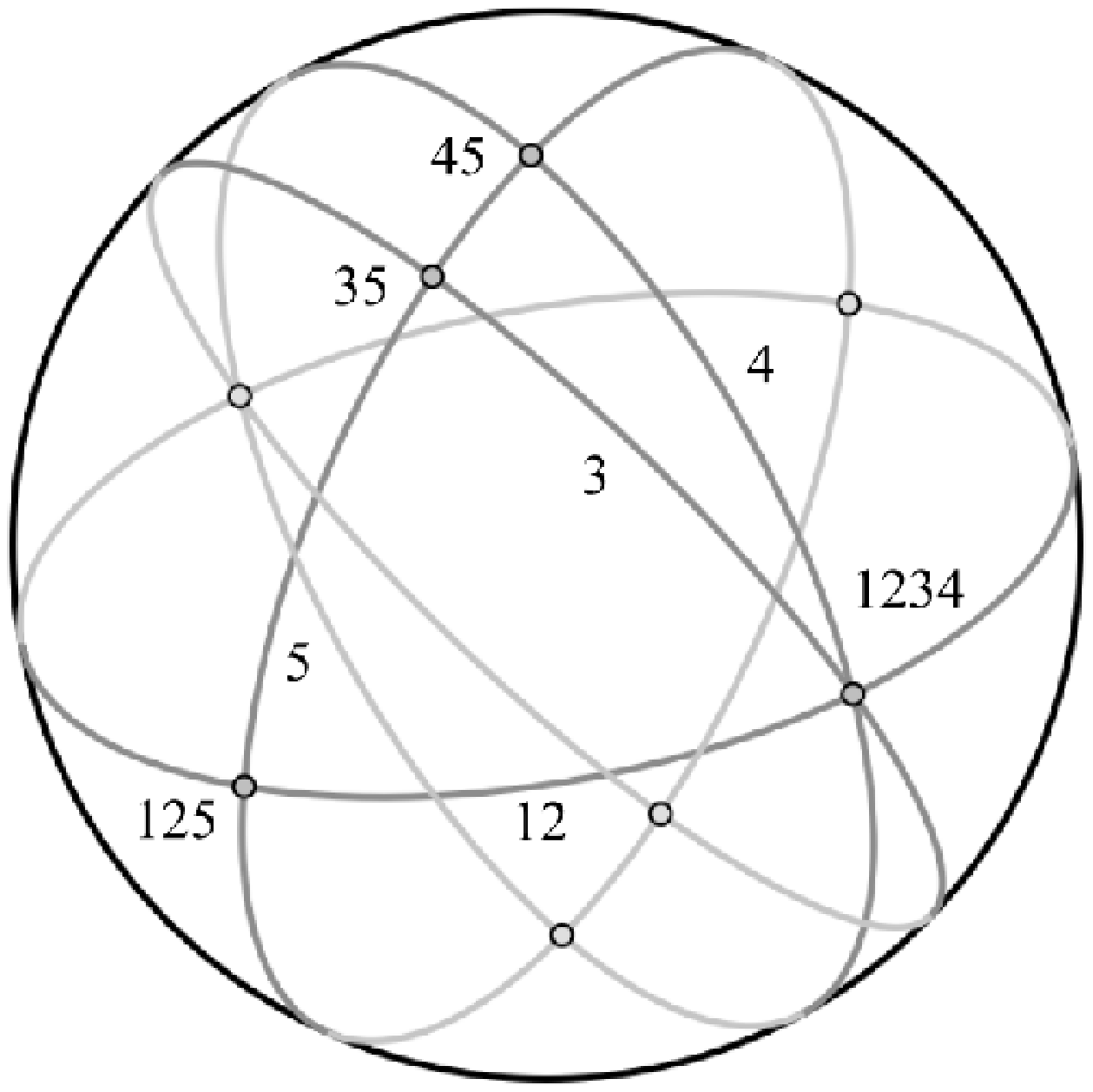} \end{center}
\caption{The space $\T_{S^0}(M,l)$ from Example \ref{explicit} (left) compared with a spherical arrangement whose lattice of flats matches that of $M$ (right)}
\end{figure} 
\end{example}

\begin{example}\label{real/complex}
Here we compute the homotopy types of $\T_{S^0}(U_{r,n})$ and $\T_{S^1}(U_{r,n})$, which are codimension one and codimension two homotopy sphere arrangements representing $U_{r,n}$, the uniform rank $r$ matroid on the set $[n]$.  By Theorem \ref{TopRepMat} and Remark \ref{SimpForm}, we have $$\T_{S^0}(U_{r,n}) \simeq \bigvee_{i = 1}^{r} \left(\bigvee^{w_i(U_{r,n})} \Susp^{i-1}\left((S^0)^{*(r-i)}\right) \right) \simeq \bigvee_{i=1}^r \left( \bigvee^{w_i(U_{r,n})} S^{r - 2} \right)$$ and  
$$\T_{S^1}(U_{r,n}) \simeq \bigvee_{i = 1}^{r} \left(\bigvee^{w_i(U_{r,n})} \Susp^{i-1}\left((S^1)^{*(r-i)}\right) \right) \simeq \bigvee_{i=1}^r \left( \bigvee^{w_i(U_{r,n})} S^{2r - i - 2} \right).$$  Therefore, the representations are homotopy equivalent to wedges of spheres.  To count the number of spheres of each dimension, we compute $w_i(U_{r,n})$ for each $i \in [r]$.  Observe that the subposet of $\lat(U_{r,n})$ consisting of elements with rank at most $r-1$ is isomorphic to that of $B_n$.  So, for every $p \in \lat(U_{r,n})$ with $\rk(p) \leq r-1$, $\mu(\hat{0},p) = (-1)^{\rk(p)}$ and hence, $w_i(U_{r,n}) = \binom{n}{i}$ for each $i \in [r-1]$.  The only remaining element of $\lat(U_{r,n})$ is the single rank $r$ flat $[n]$.  To compute $w_r(U_{r,n}) = |\mu(\hat{0},[n])|$, we apply Proposition \ref{enum_lem}.  Let $a \in [n]$.  The coatoms of $\lat(U_{r,n})$ which do not cover $a$ are the $r-1$ element subsets of $[n]$ which do not contain $a$.  There are $\binom{n-1}{r-1}$ of them.  Thus, $$\T_{S^0}(U_{r,n}) \simeq \bigvee^{m} S^{r - 2} \ \ \text{ and } \ \ \T_{S^1}(U_{r,n}) \simeq \bigvee_{i = 1}^r \left( \bigvee^{m(i)} S^{2r - i - 2} \right)$$ where $m = \binom{n-1}{r-1} + \sum\limits_{k = 1}^{r-1} \binom{n}{k}$ and $m(i)$ equals $\binom{n}{i}$ if $i \in [r-1]$ and $\binom{n-1}{i-1}$ if $i = r$.  
\end{example}

The space $\T_{S^0}(M,l)$ is the homotopy sphere arrangement analog of a real hyperplane arrangement and $\T_{S^1}(M,l)$ is the homotopy sphere arrangement analog of a complex hyperplane arrangement.  Example \ref{real/complex} illustrates that this analogy is drawn not only from the codimension of the arrangements, but from their overall behavior as well.  The codimension two picture reveals more information about the underlying matroid than the codimension one case, since each nonzero Betti number of $\T_{S^1}(M,l)$ is determined by the elements of $\lat(M)$ with a specific rank whereas $\beta_{\rk(M)-2}$ is the only nontrivial Betti number of $\T_{S^0}(M,l)$.

\section{Topological Representations of Matroid Maps}

This is the main part of the paper, in which we prove that weak maps between matroids induce continuous maps between their topological representations.  We also observe, from the theory of homotopy colimits, that the induced maps are $\Gamma$-equivariant whenever the CW complex $X,$ in the $X$-arrangements, supports a free $\Gamma$-action.  We finish the section with a new and conceptual proof of Proposition \ref{whit_dec} and show that isomorphisms are the only strong maps which induce homotopy equivalences.  

An enticing feature of Theorem \ref{TopRepMat} is the specification of a rank- and order-reversing poset map $l : \lat(M) \to B_{r}$.  This allows us to obtain different geometric realizations of the same representation in a combinatorially convenient way, adding versatility to the construction in a number of applications.  For instances in which the specific geometric realization on $\T_X(M,l)$ is less vital, we observe that the homotopy type of $\T_X(M,l)$ is independent of the choice of $l$.  In such cases, we often specify a choice for each map.  Either way, maintaining this versatility with respect to matroid maps requires a bit of bookkeeping.    We proceed with some new definitions:

\begin{definition}
Let $M$ be a matroid and $\rho \in \N$ such that $\rho \geq \rk(M)$.  A \emph{$\rho$-immersion} of $M$ is a rank- and order-reversing poset map $l : \lat(M) \to B_{\rho}$.
\end{definition}

For convenience, we refer to the pair $(M,l)$ where $M$ is a matroid and $l$ is a $\rho$-immersion of $M$ as a \emph{$\rho$-immersed matroid}.  In the case where $\rho = \rk(M)$, we abbreviate $\rho$-immersed matroid to \emph{immersed matroid}. 

\begin{definition}  Given $\rho$-immersions $l:M \to B_{\rho}$ and $l':N \to B_{\rho}$ of matroids $M$ and $N$, a weak map $\tau : M \to N$ is called \emph{$\rho$-admissible} if for every $p \in \lat(M)$, $l(p) \subseteq l'(\tau^{\#}(p))$. 
\end{definition}

Before we address the main theorems, we make two remarks:  the first asserts that results about $\rho$-immersed matroids with $\rho$-admissible maps apply to matroids and weak maps; the second describes how the topological picture changes when we consider the topological representation of a $\rho$-immersion of a matroid $M$ where $\rho > \rk(M)$. 

\begin{remark}
For any matroid $M$ there are, in general, many choices of $l$ which yield an immersed matroid, see \cite{Engstrom10}.  Moreover, for any weak map $\tau : M \to N$ and $\rho \geq \max\{\rk(M),\rk(N)\}$, one can always find a pair of maps $l$ and $l'$ so that $\tau : (M,l) \to (N,l')$ is $\rho$-admissible.  
\end{remark}

Indeed, one can fix a canonical map $\hat{l} : \lat(M) \to B_{\rho}$ for all matroids $M$ with $\rho \geq \rk(M)$. For the rest of the paper, we set $\hat{l} : \lat(M) \to B_{\rho}$ to be the map defined by $\hat{l}(p) = [\rho - \rk(p)]$ for each matroid $M$.  When using $\hat{l}$, we simplify our notation to $\D_X(M) := \D_X(M,\hat{l})$ and $\T_X(M) := \T_X(N,\hat{l})$.

\begin{lemma}\label{EZ}
Every weak map $\tau : M \to N$ between matroids $M$ and $N$ is a $\rho$-admissible weak map between the $\rho$-immersed matroids $(M,\hat{l})$ and $(N,\hat{l})$ for each $\rho \geq \max \{\rk(M),\rk(N)\}$. 
\end{lemma}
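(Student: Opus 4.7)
The plan is to unpack the two properties that need to be verified against the definitions. To establish the lemma, I need to check two things: first, that $\hat{l}$ is actually a $\rho$-immersion of each matroid involved (i.e., a rank- and order-reversing poset map into $B_\rho$); and second, that $\tau$ satisfies the $\rho$-admissibility inclusion $\hat{l}(p) \subseteq \hat{l}(\tau^{\#}(p))$ for every $p \in \lat(M)$. Both should follow directly from the definition $\hat{l}(p) = [\rho - \rk(p)]$ together with the rank-inequality characterization of weak maps recalled just after the definition of a weak map in Section 2.1.

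First, I would check that $\hat{l} : \lat(M) \to B_\rho$ is a rank- and order-reversing poset map for any matroid $M$ with $\rk(M) \leq \rho$. The set $[\rho - \rk(p)]$ is a subset of $[\rho]$ of cardinality $\rho - \rk(p)$, so its rank in $B_\rho$ is exactly $\rho - \rk_M(p)$, giving the rank-reversing property. For order-reversal, if $p \leq q$ in $\lat(M)$ then $\rk_M(p) \leq \rk_M(q)$, hence $\rho - \rk_M(p) \geq \rho - \rk_M(q)$, so $\hat{l}(p) = [\rho - \rk_M(p)] \supseteq [\rho - \rk_M(q)] = \hat{l}(q)$ in $B_\rho$. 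The same verification applies to $N$.

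Second, I would verify $\rho$-admissibility. Fix $p \in \lat(M)$. Since $\tau$ is a weak map, the lattice-theoretic characterization recalled in Section 2.1 gives $\rk_N(\tau^{\#}(p)) \leq \rk_M(p)$, hence
\[
\rho - \rk_M(p) \;\leq\; \rho - \rk_N(\tau^{\#}(p)),
\]
from which
\[
\hat{l}(p) \;=\; [\rho - \rk_M(p)] \;\subseteq\; [\rho - \rk_N(\tau^{\#}(p))] \;=\; \hat{l}(\tau^{\#}(p)),
\]
which is exactly the condition defining $\rho$-admissibility.

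There is no real obstacle here: the lemma is essentially a bookkeeping statement ensuring that the canonical immersion $\hat{l}$ is compatible with every weak map, and the argument reduces to the defining rank-inequality for weak maps combined with the explicit formula for $\hat{l}$. The only subtlety worth flagging is the hypothesis $\rho \geq \max\{\rk(M), \rk(N)\}$, which is needed so that $\rho - \rk(p)$ is a nonnegative integer in each case and $\hat{l}$ genuinely lands in $B_\rho$.
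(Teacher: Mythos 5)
Your proof is correct and takes essentially the same route as the paper's: the key step in both is applying the rank-inequality characterization of weak maps to obtain $\rk_N(\tau^{\#}(p)) \leq \rk_M(p)$ and then observing that $[\rho-\rk_M(p)] \subseteq [\rho-\rk_N(\tau^{\#}(p))]$. Your write-up is a bit more thorough in that it also spells out the verification that $\hat{l}$ is a rank- and order-reversing poset map into $B_\rho$, which the paper takes as given when it defines $\hat{l}$.
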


\begin{proof}  Let $p \in \lat(M)$.  Since $\tau$ is a weak map, $\rk_N(\tau^{\#}(p)) \leq \rk_M(p)$ and hence, $$\hat{l}(p) = [\rho-\rk_M(p)] \subseteq [\rho-\rk_N(\tau^{\#}(p))] = \hat{l}(\tau^{\#}(p)).$$  Thus, $\tau$ is $\rho$-admissible. \end{proof}

\begin{remark}
For a $\rho$-immersed matroid $(M,l)$ with $\rho > \rk(M)$, the space $\T_X(M,l)$ is not an $X$-arrangement. 
\end{remark}

We leave it to the reader to check that the fifth condition in the definition of an $X$-arrangement is not satisfied.  It turns out, however, that these spaces are merely $X$-suspensions of $X$-arrangements.

\begin{proposition}\label{stable_prop}
For any $\rho$-immersion $l$ of a matroid $M$, $\T_X(M,l) \simeq X^{\rho - \rk(M)} * \T_X(M)$.
\end{proposition}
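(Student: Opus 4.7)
The plan is to exhibit the $\rho$-immersion $l$ as a combination of an ordinary immersion $l'$ of $M$ together with $\rho - \rk(M)$ fixed ``extra'' coordinates, and then push this factorization through the homotopy colimit construction. Since $l$ is rank- and order-reversing and $\hat{1} \in \lat(M)$ has rank $\rk(M)$, the subset $S := l(\hat{1}) \subseteq [\rho]$ has cardinality $\rho - \rk(M)$, and order-reversibility forces $l(p) \supseteq S$ for every $p \in \lat(M)$. Setting $l'(p) := l(p) \setminus S$ and identifying $B_{[\rho]\setminus S}$ with $B_{\rk(M)}$ yields an immersion $l' : \lat(M) \to B_{\rk(M)}$ of $M$ in the ordinary sense. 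By the definition of $D_X$, one has $D_X(l(p)) = X^{*(\rho-\rk(M))} * D_X(l'(p))$ for every $p$, and the inclusion $D_X(l(p)) \hookrightarrow D_X(l(q))$ induced by $p \geq q$ restricts to the identity on the first factor and to $D_X(l'(p)) \hookrightarrow D_X(l'(q))$ on the second. Hence $\D_X(M,l)$ is naturally isomorphic, as a $\lat(M)$-diagram, to the diagram $p \mapsto X^{*(\rho-\rk(M))} * D_X(l'(p))$ with inclusion morphisms acting as the identity on the fixed join factor.

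Next I would transfer this decomposition through the geometric realization of Definition \ref{geometric}. A point of $\hocolim \D_X(M,l)_{\geq a}$ is a formal combination $\sum_i t_{p_i} x_i$ along a chain $a \leq p_0 \leq \cdots \leq p_m$ in $\lat(M)_{\geq a}$ with compatible $x_i \in D_X(l(p_i))$. Writing each $x_i$ as $s_i \alpha_i + (1-s_i) \beta_i$ with $\alpha_i \in X^{*(\rho-\rk(M))}$ and $\beta_i \in D_X(l'(p_i))$, the compatibility condition together with the identity action of the inclusions on the first factor forces $s_0 = \cdots = s_m =: s$ and $\alpha_0 = \cdots = \alpha_m =: \alpha$, while the $\beta_i$ form a compatible chain for $\D_X(M,l')_{\geq a}$. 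Collapsing the sum, the point becomes $s \alpha + (1-s) \sum_i t_{p_i} \beta_i$, exhibiting $\hocolim \D_X(M,l)_{\geq a}$ as $X^{*(\rho-\rk(M))} * \hocolim \D_X(M,l')_{\geq a}$ inside the ambient realization. Since join with a fixed factor distributes over unions sharing that factor, taking the union over atoms $a$ of $\lat(M)$ yields $\T_X(M,l) \cong X^{*(\rho-\rk(M))} * \T_X(M,l')$.

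Finally, since $l'$ is an immersion of $M$, Theorem \ref{TopRepMat} gives a formula for the homotopy type of $\T_X(M,l')$ depending only on the combinatorics of $\lat(M)$ and not on the choice of $l'$; in particular $\T_X(M,l') \simeq \T_X(M,\hat{l}) = \T_X(M)$, and the proposition follows. The main delicacy is in the second paragraph: one must verify that the identity-on-first-factor condition for the inclusions genuinely forces the $X^{*(\rho-\rk(M))}$-coordinate to be constant along each chain, so that the join factor can be extracted cleanly from each $\hocolim \D_X(M,l)_{\geq a}$ and, after taking the union over atoms, from $\T_X(M,l)$ itself.
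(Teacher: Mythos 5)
Your approach is correct in spirit and reaches the right conclusion, though it takes a genuinely different route from the paper and overclaims slightly in the middle. The paper proves the proposition by asserting, as a general fact, that joining a fixed $Y$ commutes with $\hocolim$ over $P$, then observing that $X^{\rho-\rk(M)} * D_X(\hat{l}(p))$ and $D_X(l(p))$ are both models for $X^{*(\rho-\rk(p))}$ and invoking the Homotopy Lemma (Lemma~\ref{HL}) to deduce a homotopy equivalence of the two homotopy colimits. You instead use the key observation that $l(\hat{1}) =: S \subseteq l(p)$ for all $p$ (which is exactly what makes the level-wise equivalences in the paper's proof compatible, a point the paper leaves implicit), split $l = S \sqcup l'$ with $l'$ an ordinary immersion, and then appeal to the homotopy-type formula of Theorem~\ref{TopRepMat} to conclude $\T_X(M,l') \simeq \T_X(M)$. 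Both are valid; your version makes the implicit bookkeeping explicit and trades the Homotopy Lemma for the independence statement.

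The one place where your argument overclaims is the second paragraph. In the geometric realization of Definition~\ref{geometric}, a point $\sum_i t_{p_i} x_i$ with $x_i = s\,\alpha + (1-s)\,\beta_i$ lives inside $\mathcal{J}(\D_X(M,l))$, and the copies of $X^{*(\rho-\rk(M))}$ at distinct $p_i$ occupy \emph{different} skew affine subspaces. Thus the expression is literally $s\sum_i t_{p_i}\alpha_i + (1-s)\sum_i t_{p_i}\beta_i$, a point of the join of several copies of $X^{*(\rho-\rk(M))}$, and it does not collapse to $s\,\alpha + (1-s)\sum_i t_{p_i}\beta_i$ in a single copy. The realization of $\hocolim_P(X^{*(\rho-\rk(M))} * \D_X(M,l'))$ records the chain data $(t_{p_i})$ even when $s=1$, whereas $X^{*(\rho-\rk(M))} * \hocolim_P \D_X(M,l')$ collapses that stratum to a single copy of $X^{*(\rho-\rk(M))}$. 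So the two spaces are not equal or homeomorphic as subsets of the ambient join; they are only homotopy equivalent (the nerve $\Delta(\lat(M)\setminus\hat{0})$ is contractible because $\hat{1}$ is a cone point, so the extra simplicial factor doesn't change the homotopy type). Since the proposition only asserts $\simeq$, this doesn't sink your proof, but the symbol $\cong$ you use for $\T_X(M,l) \cong X^{*(\rho-\rk(M))} * \T_X(M,l')$ should be $\simeq$, and the justification needs either the Homotopy Lemma (as in the paper) or the contractibility of the nerve rather than a literal pointwise identification. Note that the paper's own statement ``$Y * \hocolim_P \D = \hocolim_P \E$'' suffers from the same imprecision, so you are in good company; just be aware that the clean claim is a natural homotopy equivalence, not an identity of realizations.
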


\begin{proof}
It is a straightforward fact about homotopy colimits that for any $P$-diagram $\D$ and finite CW complex $Y$, $Y * \hocolim_P \D = \hocolim_P \E$ where $\E = Y * \D$ is the diagram defined by associating $Y * D_p$ to each $p \in P$ and letting $id * d_{pq} : Y*D_p \to Y*D_q$ be the map induced by taking the identity on $Y$, $d_{pq} : D_p \to D_q$, and extending linearly to each element in the join $Y * D_p$ for each $p \geq q$ in $P$.  Thus, $X^{\rho - \rk(M)} * \T_X(M) = \hocolim_{\lat(M) \setminus \hat{0}} X^{\rho-\rk(M)} * \D_X(M)$.  Since $D_X(p) \simeq X^{*(\rk(M) - \rk(p))}$ for each $p \in \lat(M)$, we know that $X^{\rho-\rk(M)} * D_X(p) \simeq X^{\rho - \rk(p)} \simeq D_X(l(p))$.  The result follows from Lemma \ref{HL}.   
\end{proof}

Since the dimension of $\T_X(M)$ is a function of $\rk(M)$, it is not always possible to find nice maps between $\T_X(M)$ and $\T_X(N)$ when $\rk(M) \neq \rk(N)$.  In such cases, we can still make \emph{stable} statements, i.e. $\T_X(M)$ and $\T_X(N)$ are compared after a sufficient number of $X$-suspensions.  In the language of Proposition \ref{stable_prop}, this corresponds to taking $\rho$-immersions $l$ and $l'$ of $M$ and $N$, respectively, for some common $\rho \in \N$ and comparing $\T_X(M,l)$ to $\T_X(N,l')$.  We are now ready to present the main theorems of the paper.

\subsection{Continuous and Equivariant Representations}    

We begin by showing that $\rho$-admissible weak maps induce morphisms of diagrams.

\begin{theorem}\label{weak_map_thm}
Let $X$ and $Y$ be finite CW complexes with a continuous map $f : X \to Y$ and let $(M,l)$ and $(N,l')$ be $\rho$-immersed matroids where $\rho \geq \max \{\rk(M),\rk(N)\}$.  If $\tau : (M,l) \to (N,l')$ is a $\rho$-admissible weak map, then $\tau^{\#}$ induces a morphism of diagrams $$(\tau^{\#},\alpha) : \D_X(M,l) \to \D_Y(N,l').$$
\end{theorem}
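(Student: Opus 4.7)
The plan is to exhibit directly the two pieces of data required by the definition of a morphism of diagrams: a functor between the indexing posets, and a natural transformation of the resulting $\Top$-valued functors. For the functor, we take $\tau^{\#} : \lat(M) \to \lat(N)$; the excerpt has already observed that any weak map $\tau$ induces such an order-preserving poset map, which is exactly what is needed for $\tau^{\#}$ to be a functor in the paper's poset-as-category convention. The genuine task is therefore to define the natural transformation $\alpha : \D_X(M,l) \Rightarrow \D_Y(N,l') \circ \tau^{\#}$ and verify naturality.

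For each $p \in \lat(M)$, we need a continuous map
$$\alpha_p : D_X(l(p)) \longrightarrow D_Y(l'(\tau^{\#}(p))).$$
I propose to build $\alpha_p$ as a composition of two pieces. First, the $\rho$-admissibility hypothesis $l(p) \subseteq l'(\tau^{\#}(p))$, together with the fact that $\D_X$ is a $B_\rho$-diagram with inclusion morphisms, provides an inclusion $D_X(l(p)) \hookrightarrow D_X(l'(\tau^{\#}(p)))$. Second, the map $f : X \to Y$ applied coordinatewise in each join defines a natural transformation $\D_X \Rightarrow \D_Y$ of $B_\rho$-diagrams, whose component at $l'(\tau^{\#}(p))$ is a continuous map $D_X(l'(\tau^{\#}(p))) \to D_Y(l'(\tau^{\#}(p)))$. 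Composing the two yields $\alpha_p$; on a parametrized point, $\sum_{i \in l(p)} t_i x_i \mapsto \sum_{i \in l(p)} t_i f(x_i)$, viewed inside the larger target join. Conceptually, $\alpha$ is the horizontal composite (whiskering) of the ``admissibility'' natural transformation $l \Rightarrow l' \circ \tau^{\#}$ against $\D_X$ with the $f$-induced transformation $\D_X \Rightarrow \D_Y$ against $l' \circ \tau^{\#}$.

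Naturality reduces to commutativity of the evident square for each pair $p \geq q$ in $\lat(M)$. The vertical diagram maps in that square are inclusions arising from $l(p) \subseteq l(q)$ (order-reversal of $l$) and $l'(\tau^{\#}(p)) \subseteq l'(\tau^{\#}(q))$ (order-preservation of $\tau^{\#}$ together with order-reversal of $l'$). Chasing a point $\sum_{i \in l(p)} t_i x_i$ around the two paths, both yield $\sum_{i \in l(p)} t_i f(x_i)$ viewed inside $D_Y(l'(\tau^{\#}(q)))$ through either the chain $l(p) \subseteq l(q) \subseteq l'(\tau^{\#}(q))$ or $l(p) \subseteq l'(\tau^{\#}(p)) \subseteq l'(\tau^{\#}(q))$; since both chains express the same underlying subset inclusion $l(p) \subseteq l'(\tau^{\#}(q))$ in $B_\rho$, the induced inclusions of joins coincide, and the square commutes.

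I do not expect a substantial obstacle. The content of the theorem is essentially that $\rho$-admissibility is precisely the condition needed to upgrade $\tau^{\#}$ to a morphism of diagrams; once the order-reversing versus order-preserving conventions are correctly tracked, the verification is nearly tautological. The only subtlety worth flagging is that, in the paper's convention, morphisms in a poset-category run from larger to smaller elements while $\D_X$ is covariant with respect to set inclusion, so the order-reversing nature of the immersions $l$ and $l'$ is exactly what ensures that $\D_X(M,l)$ and $\D_Y(N,l')$ are well-defined functors on $\lat(M)$ and $\lat(N)$ in the first place.
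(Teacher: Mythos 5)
Your proof is correct and follows essentially the same route as the paper: you identify $\tau^{\#}$ as the functor, build $\alpha$ as the horizontal composite of the admissibility transformation $l \Rightarrow l' \circ \tau^{\#}$ with the $f$-induced transformation $\D_X \Rightarrow \D_Y$, and check naturality by the same commuting-square argument. The only cosmetic difference is that you also spell out the pointwise formula and the diagram chase, which the paper leaves implicit.
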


\begin{proof}
The map $\tau^{\#}$ is clearly a functor from $\lat(M)$ to $\lat(N)$.  We proceed by constructing a natural transformation $\alpha$ from $\D_X(M,l)$ to $\D_Y(N,l') \circ \tau^{\#}$. Since $\tau$ is $\rho$-admissible, we get a natural transformation $\eta : l \to l' \circ \tau^{\#}$ where $\eta_p$ is the ``$\subseteq$'' map in $B_{\rho}$.  Next, let $f_{s} : D_X(s) \to D_Y(s)$ be the diagonal map induced by $f$ for each $s \in B_{\rho}$.  Then $f$ yields a natural transformation $\bar{f}$ from $\D_X$ to $\D_Y$. This can be seen diagrammatically since for each $p \geq q$ in $\lat(M)$ and $s \subseteq t$ in $B_{\rho}$, $$\begin{matrix} & & \subseteq & & \\ & l(p) & \longrightarrow & l(q) & \\ \eta_p & \downarrow & & \downarrow & \eta_q \\ & l'(\tau^{\#}(p)) & \longrightarrow & l'(\tau^{\#}(q)) & \\ & & \subseteq & & \end{matrix} \hspace{0.5in} \begin{matrix} & & \iota & & \\ & D_X(s) & \longrightarrow & D_X(t) & \\ f_{s} & \downarrow & & \downarrow & f_{t} \\ & D_Y(s) & \longrightarrow & D_Y(t) & \\ & & \iota & & \end{matrix}$$ clearly commute.  Since $\D_X(M,l) = \D_X \circ l$ and $\D_Y(N,l') \circ \tau^{\#} = \D_Y \circ l' \circ \tau^{\#}$, define the natural transformation $\alpha$ to be the ``horizontal'' composition of $\eta$ and $\bar{f}$.  
\end{proof}

Since morphisms of diagrams induce continuous maps between their homotopy colimits, Theorem \ref{weak_map_thm} immediately implies that non-annihilating, admissible weak maps induce continuous maps between the topological representations of immersed matroids.  We can prove a much stronger corollary using the theory of homotopy colimits.

\begin{corollary}\label{cont_map_thm}
For any finite CW complexes, $X$ and $Y$, and continuous map ${f : X \to Y}$, a $\rho$-admissible weak map $\tau : (M,l) \to (N,l')$ between $\rho$-immersed matroids induces a continuous map $$\tau^* : \hocolim_{\lat(M)} \D_X(M,l) \to \hocolim_{\lat(N)} \D_Y(N,l')$$ such that (1) $\tau^*|_{\T_X(M,l)}$ is homotopic to a map into $\T_Y(N,l')$ and (2) if a group $\Gamma$ acts freely on $X$ and $Y$ such that $f$ is $\Gamma$-equivariant, then $\tau^*$ is $\Gamma$-equivariant as well. 
\end{corollary}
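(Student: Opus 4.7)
My plan is to obtain $\tau^*$ as an immediate consequence of Theorem \ref{weak_map_thm} and Proposition \ref{morphdiag}, and then to verify claims (1) and (2) by unpacking the explicit formula for $\tau^*$ given just after Proposition \ref{morphdiag}.

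First, Theorem \ref{weak_map_thm} applied to the data $(\tau, f)$ produces a morphism of diagrams $(\tau^{\#}, \alpha) : \D_X(M,l) \to \D_Y(N,l')$, and Proposition \ref{morphdiag} then supplies a continuous map
$$\tau^* : \hocolim_{\lat(M)} \D_X(M,l) \to \hocolim_{\lat(N)} \D_Y(N,l')$$
whose value at $(\lambda_1 p_1 + \cdots + \lambda_k p_k, x)$ is $(\lambda_1 \tau^{\#}(p_1) + \cdots + \lambda_k \tau^{\#}(p_k), \alpha_{p_k}(x))$.

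For part (1), the crucial observation is that $\tau^{\#}$ restricts to a poset functor $\lat(M)_{\geq a} \to \lat(N)_{\geq \tau^{\#}(a)}$ for each atom $a$ of $\lat(M)$, so the formula for $\tau^*$ shows that $\tau^*$ restricts to a continuous map $\hocolim \D_X(M,l)_{\geq a} \to \hocolim \D_Y(N,l')_{\geq \tau^{\#}(a)}$. When $\tau^{\#}(a)$ is an atom of $\lat(N)$, the target is already a piece of $\T_Y(N,l')$ by definition, so nothing more is needed. To handle the annihilating case $\tau^{\#}(a) = \hat{0}_N$, I would construct a non-annihilating, $\rho$-admissible poset map $\sigma^{\#} : \lat(M) \to \lat(N)$ with $\sigma^{\#} \geq \tau^{\#}$ pointwise by replacing each annihilated atom by a compatible atom of $\lat(N)$. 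Remark \ref{morph_hom_rem} then delivers $\sigma^* \simeq \tau^*$, and by construction $\sigma^*$ carries every atom-piece of $\T_X(M,l)$ directly into $\T_Y(N,l')$; taking the union over all atoms of $\lat(M)$ yields the homotopy asserted in (1).

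For part (2), a free $\Gamma$-action on $X$ induces diagonal $\Gamma$-actions on each join $D_X(\sigma) = X^{*|\sigma|}$, which assemble via the parametrization of Definition \ref{geometric} into a free $\Gamma$-action on $\hocolim \D_X(M,l)$ preserving each atom-indexed sub-hocolim in $\T_X(M,l)$; the analogous statements hold on the $Y$-side. The $\Gamma$-equivariance of $f$ passes to each component $\alpha_p$ of the natural transformation built in Theorem \ref{weak_map_thm}, and the explicit formula for $\tau^*$ then manifestly commutes with the $\Gamma$-actions on source and target. The main obstacle will be the annihilating subcase of (1): producing $\sigma^{\#}$ requires a coherent choice of replacement atoms along every chain through an annihilated atom while simultaneously preserving poset monotonicity and $\rho$-admissibility. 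The existence of such a coherent choice follows from the fact that every element of the geometric lattice $\lat(N)$ is the join of the atoms below it, but making the choice explicit requires some care.
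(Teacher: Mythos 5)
Your overall plan follows the paper's route: existence of $\tau^*$ from Theorem \ref{weak_map_thm} and Proposition \ref{morphdiag}, the non-annihilating case of (1) read directly off the explicit formula for $\tau^*$, and (2) via the compatibility of the $\Gamma$-actions with the explicit formula (the paper simply cites \cite{DH} for this, so your sketch is consistent with it).

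The place where your argument is genuinely incomplete is precisely the one you flag: the annihilating subcase of (1). You propose to build a non-annihilating $\sigma^{\#} \geq \tau^{\#}$ by ``replacing each annihilated atom by a compatible atom of $\lat(N)$'' and then worry about a coherent per-chain choice. The paper sidesteps all of that with a much cruder — and therefore safer — construction: fix a \emph{single} atom $a$ in $\tau^{\#}(\lat(M)\setminus\hat{0})$ and set
\[
\tau_a(p) = \begin{cases} \tau^{\#}(p) & \text{if } \tau^{\#}(p)\neq\hat{0}_N,\\ a & \text{otherwise,}\end{cases}
\]
on atoms $p$ of $\lat(M)$, extending to all of $\lat(M)$ by joins. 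Because every annihilated atom is sent to the \emph{same} atom $a$, there is no coherence problem at all: monotonicity is immediate from the join-extension, and $\tau_a \geq \tau^{\#}$ pointwise because $\tau^{\#}(p)=\hat{0}\leq a$ on exactly the atoms where they differ. Proposition \ref{morph_hom} (or equivalently Remark \ref{morph_hom_rem}) then gives $\tau_a^* \simeq \tau^*$, and $\tau_a$ being non-annihilating forces $\tau_a^*(\T_X(M,l))\subseteq\T_Y(N,l')$. So the missing idea is not a delicate atom-by-atom assignment governed by the atomicity of $\lat(N)$; it is simply ``route everything annihilated to one fixed atom in the image.'' As written, your step hands you an object $\sigma^{\#}$ whose existence you have not actually established, and the difficulty you anticipate (varying choices along chains) is an artifact of overcomplicating the construction rather than an unavoidable obstacle.

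One smaller remark: your observation that $\tau^{\#}$ restricts to $\lat(M)_{\geq a}\to\lat(N)_{\geq \tau^{\#}(a)}$ for each atom $a$ is correct and does show that in the non-annihilating case $\tau^*$ carries each atom-piece of $\T_X(M,l)$ into $\T_Y(N,l')$ on the nose, which matches the paper's ``$(1)$ is trivial'' in that case.
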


\begin{proof}
The existence of $\tau^*$ follows directly from Theorem \ref{weak_map_thm} and Proposition \ref{morphdiag}.  If $\tau$ is non-annihilating, then $\tau^*|_{\T_X(M,l)}$ maps into $\T_Y(N,l')$ and (1) is trivial.  Otherwise, let $a$ be any atom in $\tau^{\#}(\lat(M) \setminus \hat{0})$ and let $\tau_a : \lat(M) \to \lat(N)$ be the map induced by $$\tau_a(p) := \begin{cases} \tau^{\#}(p) & \text{ if $\tau^{\#}(p) \neq \hat{0} \in \lat(N)$ } \\ a & \text{ otherwise }  \end{cases}$$ for every atom $p \in \lat(M)$.  By Proposition \ref{morph_hom}, $\tau^* \simeq \tau_a^*$ and since $\tau_a$ is non-annihilating, $\tau_a^*$ maps $\T_X(M,l)$ to $\T_Y(N,l')$. Condition (2) falls out naturally from \cite{DH}. 
\end{proof}

While Theorem \ref{weak_map_thm} and Corollary \ref{cont_map_thm} are rather general, they restrict to special cases which are more practical.  For instance, if we set $X = Y$, choose a canonical family of immersions, such as $\hat{l}$, and restrict our attention to matroids of a fixed rank, then we get the following simpler statements.

\begin{corollary}
Let $X$ be a finite CW complex and let $M$ and $N$ be rank $r$ matroids.  If $\tau : M \to N$ is a weak map, then $\tau^{\#}$ induces a morphism of diagrams $(\tau^{\#},\iota) : \D_X(M) \to \D_X(N)$ where $\iota$ is the natural transformation given by inclusion maps.
\end{corollary}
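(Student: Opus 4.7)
The plan is to deduce this corollary directly from Theorem \ref{weak_map_thm} by specializing all the data. Since $M$ and $N$ are both of rank $r$, the canonical map $\hat{l}(p) = [r - \rk(p)]$ serves as an $r$-immersion for both; this is precisely the $r$-immersion that the shorthand $\D_X(M) = \D_X(M,\hat{l})$ refers to. By Lemma \ref{EZ}, any weak map $\tau : M \to N$ is an $r$-admissible weak map from $(M,\hat{l})$ to $(N,\hat{l})$. Thus the hypotheses of Theorem \ref{weak_map_thm} are satisfied with $X = Y$, $f = \mathrm{id}_X$, $l = l' = \hat{l}$, and $\rho = r$.

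Next, I would inspect what the natural transformation $\alpha$ produced by the theorem reduces to in this special case. Recall that $\alpha$ is the horizontal composition of two natural transformations: $\eta : l \to l' \circ \tau^{\#}$ whose component at $p$ is the inclusion $[r - \rk_M(p)] \subseteq [r - \rk_N(\tau^{\#}(p))]$ in $B_r$ (valid because $\tau$ is weak, so ranks can only drop), and $\bar{f} : \D_X \to \D_Y$ whose component at each $s \in B_r$ is the diagonal map $f_s : D_X(s) \to D_Y(s)$ induced by $f$. When $f$ is the identity on $X$, each $f_s$ is the identity map on $D_X(s)$, so $\bar{f}$ is the identity natural transformation on $\D_X$. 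Hence $\alpha$ collapses to the image of $\eta$ under $\D_X$, and its component at $p \in \lat(M)$ is the inclusion
\[
D_X\bigl([r - \rk_M(p)]\bigr) \hookrightarrow D_X\bigl([r - \rk_N(\tau^{\#}(p))]\bigr),
\]
which is the natural inclusion of the $(r - \rk_M(p))$-fold join of $X$ into the $(r - \rk_N(\tau^{\#}(p)))$-fold join (by padding with further copies of $X$). This is precisely $\iota$, the natural transformation by inclusion maps.

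The routine verification remaining is that $\tau^{\#}$ is a functor from $\lat(M)$ to $\lat(N)$ and that the square certifying naturality of $\alpha$ commutes; both are essentially recorded in the proof of Theorem \ref{weak_map_thm}, since the commuting squares displayed there involve only the $\eta_p$-component (inclusions in $B_\rho$) and the $f_s$-component (identity here), and each square was already shown to commute in general. I do not anticipate a substantive obstacle: the corollary is a book-keeping specialization of the theorem, and the only subtlety is recognizing that both matroids can be equipped with the \emph{same} canonical $\rho$-immersion precisely because they share the same rank $r$, which is exactly why setting $\rho = r$ works.
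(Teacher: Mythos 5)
Your proposal is correct and follows exactly the route the paper intends: the corollary is stated as a direct specialization of Theorem \ref{weak_map_thm} with $X = Y$, $f = \mathrm{id}_X$, $\rho = r$, and $l = l' = \hat{l}$, using Lemma \ref{EZ} to get $r$-admissibility. Your unwinding of why $\alpha$ collapses to inclusion maps (the $\bar f$-component becomes trivial, leaving only the $\eta$-component of the horizontal composition, which is an inclusion of sub-joins) is the correct and intended bookkeeping, just spelled out in more detail than the paper records.
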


\begin{corollary} 
For any finite CW complex, $X$, a non-annihilating weak map $\tau : M \to N$ between rank $r$ matroids induces a continuous map $\tau^* : \T_X(M) \to \T_X(N).$  Moreover, if there is a free action of a group $\Gamma$ on $X$, then $\tau^*$ is $\Gamma$-equivariant. 
\end{corollary}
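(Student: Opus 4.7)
The plan is to derive this corollary as a direct specialization of the preceding Corollary \ref{cont_map_thm}, taking $Y = X$ and $f = \mathrm{id}_X$, with both $M$ and $N$ equipped with the canonical $r$-immersion $\hat{l} : \lat(\cdot) \to B_r$ defined by $\hat{l}(p) = [r - \rk(p)]$. Since $\rk(M) = \rk(N) = r$, Lemma \ref{EZ} applied with $\rho = r$ tells us that $\tau : (M,\hat{l}) \to (N,\hat{l})$ is $r$-admissible. Corollary \ref{cont_map_thm} then immediately produces a continuous map $\tau^* : \hocolim_{\lat(M)} \D_X(M) \to \hocolim_{\lat(N)} \D_X(N)$ and guarantees that $\tau^*$ restricted to $\T_X(M)$ is homotopic to a map into $\T_X(N)$.

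The essential use of the non-annihilating hypothesis is to upgrade this ``homotopic to a map into'' conclusion to an honest map. Recall that $\T_X(M) = \bigcup_{a} \hocolim \D_X(M)_{\geq a}$, where $a$ ranges over the atoms of $\lat(M)$, and similarly for $\T_X(N)$. When $\tau^{\#}$ sends atoms to atoms, each principal filter $\lat(M)_{\geq a}$ is carried into a principal filter $\lat(N)_{\geq \tau^{\#}(a)}$, and the explicit formula for $\tau^*$ following Proposition \ref{morphdiag} shows that $\tau^*$ ships $\T_X(M)$ directly into $\T_X(N)$. This bypasses the homotopy correction $\tau \simeq \tau_a$ used in the proof of Corollary \ref{cont_map_thm}, yielding the required continuous map $\tau^* : \T_X(M) \to \T_X(N)$ on the nose.

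Equivariance is essentially free at this stage. A free $\Gamma$-action on $X$ induces a diagonal free $\Gamma$-action on each join $D_X(\sigma) \simeq X^{*|\sigma|}$ compatibly with the inclusion maps of the diagram, and the identity $\mathrm{id}_X : X \to X$ is trivially $\Gamma$-equivariant, so part (2) of Corollary \ref{cont_map_thm} delivers the $\Gamma$-equivariance of $\tau^*$. There is no genuine obstacle here; the entire argument is an unpacking of the general case, and the only step that requires a brief verification is that the non-annihilating hypothesis genuinely eliminates the homotopy in part (1) of that corollary.
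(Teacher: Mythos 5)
Your proof is correct and matches the paper's intent: the paper presents this corollary as an unproved specialization of Corollary \ref{cont_map_thm} (setting $X = Y$, $f = \mathrm{id}_X$, $\rho = r$, $l = l' = \hat{l}$), and the observation that the non-annihilating hypothesis makes $\tau^*$ carry $\T_X(M)$ honestly into $\T_X(N)$ is already recorded in the proof of Corollary \ref{cont_map_thm} itself. You have simply unpacked that specialization explicitly, which is exactly the right thing to do.
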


\subsection{Properties of Continuous Representations}\label{Apps}

In this section, we explore several combinatorial aspects of weak maps which have nice topological interpretations.  We begin with a new and conceptual proof of Proposition \ref{whit_dec}.  For a fixed CW complex $X$ and $\rho \in \N$, define $Y_i = \Susp^{i-1}(X^{*(\rho-i)})$ for each $1 \leq i \leq \rho$.  By Remark \ref{SimpForm}, for any $\rho$-immersed matroid $(M,l)$, \begin{equation}\tag{\textasteriskcentered} \beta_k(\T_X(M,l)) = \sum\limits_{i=1}^{\rk(M)} w_i(M) \beta_k(Y_i)  \end{equation} for each $k \in \N$.  So, there is a natural correspondence between the Whitney numbers of the first kind of a matroid and the Betti numbers of its Engstr\"om representations.

\begin{theorem}\label{betti_dec}
Let $X$ be a finite CW complex and $\tau$ be a surjective $\rho$-admissable weak map between $\rho$-immersed matroids $(M,l)$ and $(N,l')$. Then $$\beta_k(\T_X(M,l)) \geq \beta_k(\T_X(N,l'))$$ for each $k \in \N$.
\end{theorem}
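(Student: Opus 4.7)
The plan is to apply Corollary \ref{cont_map_thm} (with $Y = X$ and $f$ the identity on $X$, and, if necessary, first replacing $\tau$ by the non-annihilating perturbation $\tau_a$ from the proof of that corollary) to obtain a continuous map $\tau^* : \T_X(M,l) \to \T_X(N,l')$; then to show that $\tau^*$ induces a surjection on homology via Lemma \ref{VUFL}; and finally to read off the Betti number inequality from the wedge decomposition in Remark \ref{SimpForm}. The last step is essentially automatic: since both representations are homotopy equivalent to wedges of the spaces $Y_i$, their reduced homology groups are free with ranks computed by formula $(\ast)$, and any surjection between finitely generated free abelian groups forces the rank of the domain to dominate that of the codomain in every degree.

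To produce the required surjection on homology, I would apply Lemma \ref{VUFL} to the morphism of diagrams $(\tau^{\#},\alpha) : \D_X(M,l)|_{\lat(M) \setminus \hat{0}} \to \D_X(N,l')|_{\lat(N) \setminus \hat{0}}$ provided by Theorem \ref{weak_map_thm}. Setting $P_q := \{p \in \lat(M) \setminus \hat{0} : \tau^{\#}(p) \geq q\}$, the task reduces to exhibiting, for each $q \in \lat(N) \setminus \hat{0}$, a surjection $H_*(\hocolim_{P_q} \D_X(M,l)) \twoheadrightarrow H_*(\hocolim_{\lat(N)_{\geq q}} \D_X(N,l'))$. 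Here is where surjectivity and $\rho$-admissibility pay off: Lemma \ref{surj_poset} produces $p_0 \in (\tau^{\#})^{-1}(q)$ with $\rk_M(p_0) = \rk_N(q)$, so that $p_0 \in P_q$ and $\lat(M)_{\geq p_0} \subseteq P_q$. Each of $\lat(M)_{\geq p_0}$ and $\lat(N)_{\geq q}$ has a unique minimum, so the corresponding subhocolims deformation retract onto $D_X(l(p_0))$ and $D_X(l'(q))$ respectively. Admissibility yields $l(p_0) \subseteq l'(q)$, while rank equality forces $|l(p_0)| = \rho - \rk_M(p_0) = \rho - \rk_N(q) = |l'(q)|$, so $l(p_0) = l'(q)$ and $\alpha_{p_0} : D_X(l(p_0)) \to D_X(l'(q))$ is the identity. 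Therefore the composition
\[
\hocolim_{\lat(M)_{\geq p_0}} \D_X(M,l) \hookrightarrow \hocolim_{P_q} \D_X(M,l) \to \hocolim_{\lat(N)_{\geq q}} \D_X(N,l')
\]
is a homotopy equivalence, which immediately forces the second arrow to be surjective on homology, and Lemma \ref{VUFL} then upgrades this to surjectivity of $\tau^*$ on homology globally.

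The main obstacle I anticipate is bookkeeping rather than conceptual. I must justify that a subhocolim over a subposet with unique minimum $p_0$ really does deformation retract onto the space $D_X(l(p_0))$; this should follow directly from the definition of $\hocolim$ in Section \ref{DS}, since each $\Delta(P_{\leq p})$ in the subposet is a cone with apex $p_0$, but Lemma \ref{HL} can also be invoked to make the retraction functorial across the diagram. In parallel, I must verify that passing to the non-annihilating perturbation $\tau_a$ does not alter the induced map on homology --- this is immediate from Proposition \ref{morph_hom}, since $\tau^* \simeq \tau_a^*$, but it is worth stating explicitly so that the final surjectivity statement genuinely concerns $\tau^*$ rather than $\tau_a^*$.
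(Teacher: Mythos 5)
Your proposal is correct and follows essentially the same route as the paper's proof: you apply Lemma \ref{surj_poset} to produce a rank-preserving preimage $p_0$ of each $q$, observe that both subhocolims over the upward-closed subposets contract onto the diagram spaces at their minima, use admissibility plus the rank count ($|l(p_0)| = \rho - \rk_M(p_0) = \rho - \rk_N(q) = |l'(q)|$) to identify the induced map on those minima, feed the resulting local surjections into Lemma \ref{VUFL}, and finally convert the global homological surjection into Betti-number inequalities via the wedge decomposition in Remark \ref{SimpForm}. The only substantive difference is that you are more careful than the paper about the non-annihilating perturbation $\tau_a$ (needed so that $\tau^{\#}$ restricts to a poset map $\lat(M)\setminus\hat{0} \to \lat(N)\setminus\hat{0}$) and about verifying $l(p_0) = l'(q)$; these are genuine gaps the paper glosses over, and your appeal to Proposition \ref{morph_hom} handles the perturbation correctly. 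One small imprecision: you claim the reduced homology groups are free, which need not hold if $X$ has torsion, but this does not affect the argument since a surjection of finitely generated abelian groups already forces the rank inequality after tensoring with $\Q$.
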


\begin{proof}
We prove that $\tau^*$ induces a surjection in homology.  The key observation is that the space $\hocolim_{\lat(N)_{\geq q}} \D_X$ contracts onto $ D_X(q)$ for each $q \in \lat(N)$.  By Lemma \ref{surj_poset}, there exists a flat $p' \in {(\tau^{\#})}\inv(q)$ with $\rk_M(p') = \rk_N(q)$.  Thus, $\hocolim_{\{p \in P : \tau^\#(p) \geq q\}} \D$ contains $D_X(p')$, which is included into $D_X(q)$ by $\tau^*$, and hence, $\tau^*$ induces a surjection $$H_*(\hocolim_{\{p \in \lat(M) : \tau^\#(p) \geq q\}} \D_X) \twoheadrightarrow H_*(\hocolim_{\lat(N)_{\geq q}} \D_X).$$  By Lemma \ref{VUFL}, $\tau^*$ induces a surjection $H_*(\T_X(M,l)) \twoheadrightarrow H_*(\T_X(N,l'))$ and the result follows immediately.
\end{proof}

When $X \simeq S^1$, $Y_i \simeq S^{2r - i + 2}$ and thus, $\beta_{2r-i+2}(\T_{S^1}(M)) = w_i(M)$ for each $i \in \{0,1,...,r\}$.  Proposition \ref{whit_dec} is now an easy corollary of Theorem \ref{betti_dec}:

\begin{corollary}[\bf Proposition \ref{whit_dec}]
If $\tau : M \to N$ is a surjective weak map and $\rk(M) = \rk(N) = r$, then $w_k(M) \geq w_k(N)$ for each $k \in \{0,...,r\}$.
\end{corollary}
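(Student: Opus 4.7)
The plan is to deduce this corollary directly from Theorem \ref{betti_dec} by making a judicious choice of $X$, together with the matroid-to-immersed-matroid bookkeeping of Lemma \ref{EZ} and the Betti number formula $(\ast)$. First, since $\rk(M) = \rk(N) = r$, Lemma \ref{EZ} promotes $\tau$ to an $r$-admissible weak map $(M,\hat{l}) \to (N,\hat{l})$ between $r$-immersed matroids. This puts us in position to invoke Theorem \ref{betti_dec} with any finite CW complex $X$, which yields
$$\beta_k(\T_X(M)) \geq \beta_k(\T_X(N)) \quad \text{for every } k \in \N.$$

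The remaining task is to choose $X$ so that the inequalities above translate cleanly into inequalities on Whitney numbers. Here I would take $X \simeq S^1$, $\rho = r$, and examine the spaces $Y_i = \Susp^{i-1}(X^{\ast(r-i)})$ appearing in $(\ast)$. For $X = S^1$ the iterated join $(S^1)^{\ast(r-i)}$ is a sphere (dimension $2(r-i)-1$), and suspending $i-1$ times gives another sphere; crucially, the dimensions obtained as $i$ ranges over $\{1, 2, \ldots, r\}$ are all distinct. Consequently $\beta_k(Y_i)$ equals $1$ for exactly one value of $k = k(i)$ and is $0$ otherwise, so formula $(\ast)$ collapses to
$$\beta_{k(i)}\!\left(\T_{S^1}(M)\right) = w_i(M),$$
and analogously for $N$. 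Setting $k = k(i)$ in the Betti-number inequality then yields $w_i(M) \geq w_i(N)$ for each $i \in \{1, \ldots, r\}$, with the boundary case $i = 0$ being trivial since $w_0 = 1$ for any matroid.

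There is really no serious obstacle here once Theorem \ref{betti_dec} is in hand; the only point requiring care is the book\-keeping verification that the dimensions $k(i) = (i-1) + (2(r-i)-1) = 2r - i - 2$ are genuinely distinct for distinct $i$, so that the Whitney numbers of a given rank can be isolated as individual Betti numbers of $\T_{S^1}(M)$ rather than appearing only in some linear combination. This distinctness is precisely what makes the codimension-two picture $X = S^1$ far more informative than the codimension-one picture $X = S^0$, where every $Y_i$ has the same dimension and the Betti number inequality only recovers $\sum_i w_i(M) \geq \sum_i w_i(N)$.
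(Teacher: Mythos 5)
Your proof is correct and is essentially identical to the paper's own argument: the paper also derives the corollary by specializing Theorem \ref{betti_dec} to $X = S^1$ and using the fact that the wedge summands $Y_i$ are spheres of pairwise distinct dimensions, so each $w_i$ is isolated as a single (reduced) Betti number of $\T_{S^1}(M)$. One small note: your computed dimension $k(i) = 2r - i - 2$ is the correct one (it agrees with the paper's Example on $U_{r,n}$), whereas the paper's displayed proof writes $\beta_{2r-k+2}$, an apparent sign typo.
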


\begin{proof}
For $k \in \{0,...,r\}$, $w_k(M) = \beta_{2r - k + 2}(\T_{S^1}(M)) \geq \beta_{2r - k + 2}(\T_{S^1}(N)) = w_k(N).$
\end{proof}

Next, we take a close look at truncations.  To begin, observe that for any rank $r$ matroid $M$ and $n \leq r$, the lattices $\lat(M)$ and $\lat(T^{r-n}(M))$ are isomorphic up to rank $n-1$.  Thus, $w_k(M) = w_k(T^{r-n}(M))$ for each $k \leq n-1$.  For $k = n$, we have the following lemma which we suspect is already known. Since we could not find a reference for it, we include a proof here.

\begin{lemma} \label{trunc_lem}
For any rank $r$ matroid $M$ and $n < r$, $w_n(M) \geq w_n(T^{r-n}(M)).$
\end{lemma}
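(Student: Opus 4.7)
The plan is to reduce Lemma \ref{trunc_lem} to Theorem \ref{betti_dec} applied to the identity weak map $id : M \to T^{r-n}(M)$. Since $M$ and $T^{r-n}(M)$ share a ground set, this map is surjective, and since $\rk_{T^{r-n}(M)}(X) \leq \rk_M(X)$ for every $X \subseteq E(M)$ by construction of truncation, it is a weak map. The key subtlety is that $M$ has rank $r$ while $T^{r-n}(M)$ has rank $n < r$, so naively comparing $\T_{S^1}(M)$ with $\T_{S^1}(T^{r-n}(M))$ puts the Whitney number $w_n$ in different topological degrees on the two sides. To align them, I would use a common $\rho$-immersion with $\rho = r$ on both matroids.

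Concretely, I would set $\rho = r$ and use the canonical immersion $\hat{l}$ on each matroid. By Lemma \ref{EZ}, $id : (M,\hat{l}) \to (T^{r-n}(M),\hat{l})$ is $r$-admissible, so Theorem \ref{betti_dec} with $X = S^1$ applies and gives $\beta_k(\T_{S^1}(M)) \geq \beta_k(\T_{S^1}(T^{r-n}(M),\hat{l}))$ for every $k$.

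Next I would unpack the two homotopy types. Remark \ref{SimpForm} gives immediately
\[
\T_{S^1}(M) \simeq \bigvee_{i=1}^{r}\bigvee\nolimits^{w_i(M)} S^{2r-i-2}.
\]
For the truncation, Proposition \ref{stable_prop} identifies $\T_{S^1}(T^{r-n}(M),\hat{l})$ with $(S^1)^{*(r-n)} * \T_{S^1}(T^{r-n}(M))$, and joining $(S^1)^{*(r-n)}$ against the $i$-th summand $\Susp^{i-1}((S^1)^{*(n-i)})$ of Remark \ref{SimpForm} collapses to $\Susp^{i-1}((S^1)^{*(r-i)}) \simeq S^{2r-i-2}$. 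Thus
\[
\T_{S^1}(T^{r-n}(M),\hat{l}) \simeq \bigvee_{i=1}^{n}\bigvee\nolimits^{w_i(T^{r-n}(M))} S^{2r-i-2}.
\]
The dimensions $\{2r-i-2 : i \in [r]\}$ are pairwise distinct, so the Betti number in degree $2r-n-2$ equals $w_n(M)$ on the left and $w_n(T^{r-n}(M))$ on the right. Reading off Theorem \ref{betti_dec} in this single degree yields the desired inequality.

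The main obstacle is precisely the rank mismatch: without the stabilization given by Proposition \ref{stable_prop}, the two wedges of spheres live in incomparable degree ranges and the argument breaks. The payoff of the $r$-immersion viewpoint is that it makes both representations ``look like'' rank $r$ arrangements topologically, so that each Whitney number sits in the same fixed degree on both sides and Theorem \ref{betti_dec} can be applied degreewise.
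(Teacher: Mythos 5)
Your proof is correct, but it takes a genuinely different route from the paper. The paper proves Lemma~\ref{trunc_lem} purely combinatorially: it picks a rank-one flat $a$ of $T^{r-n}(M)$, applies Lemma~\ref{enum_lem} once to $\lat(T^{r-n}(M))$ to express $w_n(T^{r-n}(M))$ as a sum of $|\mu(\hat0,q)|$ over rank-$(n{-}1)$ flats $q$ not above $a$, applies Lemma~\ref{enum_lem} again inside each interval $[\hat0,p]$ for $p$ a rank-$n$ flat of $M$, and then uses the fact that every such $q$ lies below at least one such $p$ to bound $w_n(M)$ from below by the same sum. No topology enters. Your argument instead routes everything through Theorem~\ref{betti_dec} with $X=S^1$ and a common $r$-immersion, using Proposition~\ref{stable_prop} to stabilize the lower-rank truncation so that both wedges of spheres live in matching degrees; you then read off the single degree $2r-n-2$. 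I checked the details: the identity map $id:M\to T^{r-n}(M)$ is indeed a surjective weak map, Lemma~\ref{EZ} gives $r$-admissibility for $\hat l$, your join computation $(S^1)^{*(r-n)}*\Susp^{i-1}((S^1)^{*(n-i)})\simeq \Susp^{i-1}((S^1)^{*(r-i)})\simeq S^{2r-i-2}$ is right, the dimensions $2r-i-2$ are pairwise distinct, and there is no circularity since Theorem~\ref{betti_dec}, Proposition~\ref{stable_prop}, and equation~(\textasteriskcentered) all precede Lemma~\ref{trunc_lem} and do not rely on it.

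The trade-off is clear. Your proof is more conceptual and consistent with the paper's broader theme of replacing M\"obius-function manipulations with topological statements about Engstr\"om representations: it exhibits $w_n(M)\geq w_n(T^{r-n}(M))$ as a shadow of a surjection in homology, and in fact the same argument applied to an arbitrary surjective weak map $\tau:M\to N$ with $\rho=\rk(M)$ gives $w_i(M)\geq w_i(N)$ for all $i\leq\rk(N)$ in one stroke, subsuming both Proposition~\ref{whit_dec} and Lemma~\ref{trunc_lem}. The paper's proof, by contrast, is elementary and self-contained --- it needs only the two geometric-lattice lemmas from Section~2.1 and no homotopy-colimit machinery --- which is why the author includes it as a standalone stepping stone ``which we suspect is already known.'' Either proof supports the subsequent use of Lemma~\ref{trunc_lem} in Theorem~\ref{betti_weak} without introducing a cycle.
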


\begin{proof}
Let $a$ be a rank one flat of  $T^{r-n}(M)$, let $\F_a^{n-1}$ be the set of rank $n-1$ flats of $T^{r-n}(M)$ which do not cover $a$, and let $\F^n$ be the set of rank $n$ flats of $M$.  By Lemma \ref{enum_lem}, $$w_n(T^{r-n}(M)) = \mu_{\lat(T^{r-n}(M))}(\hat{0},\hat{1}) = \sum\limits_{q \in \F_a^{n-1}} |\mu_{\lat(T^{r-n}(M))}(\hat{0},q)|.$$  Since every interval in a geometric lattice is geometric, we apply Lemma \ref{enum_lem} again to get $\mu_{\lat(M)}(\hat{0},p) = - \sum\limits_{\{q \in F_a^{n-1} \, | \, q \leq p\}} \mu_{\lat(M)}(\hat{0},q)$ for each $p \in \F^n$.  Observe that each $q \in \F_a^{n-1}$ is covered by at least one element of $\F^n$, so $$w_n(M) = \sum\limits_{p \in \F^n} | \mu_{\lat(M)}(\hat{0},p) | \geq  \sum\limits_{q \in \F_a^{n-1}} |\mu_{\lat(M)}(\hat{0},q)| =  w_n(\lat(T^{r-n}(M))),$$ because $\mu_{\lat(M)}(\hat{0},q) = \mu_{\lat(T^{r-n}(M))}(\hat{0},q)$ for all $p$ with $\rk(q) < n$.  \end{proof}

We can now give some sufficient conditions for when the continuous representation of a surjective weak map strictly decreases Betti numbers.

\begin{theorem} \label{betti_weak}
Let $X$ be a finite CW complex and $\tau$ be a surjective $\rho$-admissable weak map between $\rho$-immersed matroids $(M,l)$ and $(N,l')$.  If $\rk(M) > \rk(N)$ and $\beta_k(Y_i) \neq 0$ for some $i \in \{\rk(N)+1, ... , \rk(M)\}$, then $\beta_k(\T_X(M,l)) > \beta_k(\T_X(N,l'))$.
\end{theorem}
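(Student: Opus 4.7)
The plan is to read the strict inequality off the explicit Betti number formula $(\textasteriskcentered)$. Setting $r = \rk(M)$ and $s = \rk(N)$, applying $(\textasteriskcentered)$ to both spaces lets me write
$$\beta_k(\T_X(M,l)) - \beta_k(\T_X(N,l')) \;=\; \sum_{i=1}^{s}\bigl[w_i(M) - w_i(N)\bigr]\beta_k(Y_i) \;+\; \sum_{i=s+1}^{r} w_i(M)\,\beta_k(Y_i),$$
and I aim to show that the first sum is termwise nonnegative while the second is strictly positive; the conclusion is then immediate.

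For the first sum, I would bridge the rank gap using Lemma \ref{weak_fact} to factor $\tau = \tau_{r-s}\circ id_{r-s}$ through the truncation $T^{r-s}(M)$, a matroid of rank $s$. The surjective weak map $\tau_{r-s} : T^{r-s}(M) \to N$ between matroids of equal rank now falls under Proposition \ref{whit_dec}, giving $w_i(T^{r-s}(M)) \geq w_i(N)$ for all $i \leq s$. Since $\lat(M)$ and $\lat(T^{r-s}(M))$ agree through rank $s-1$, the Möbius values coincide and $w_i(M) = w_i(T^{r-s}(M))$ for $i < s$; and Lemma \ref{trunc_lem} supplies the top-rank estimate $w_s(M) \geq w_s(T^{r-s}(M))$. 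Chaining these yields $w_i(M) \geq w_i(N)$ for every $i \leq s$, as required.

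For the second sum, the hypothesis furnishes an index $i_0 \in \{s+1,\ldots,r\}$ with $\beta_k(Y_{i_0}) \neq 0$. Because $\lat(M)$ is a rank-$r$ geometric lattice and $i_0 \leq r$, it contains at least one flat $p$ of rank $i_0$; Lemma \ref{pos_lem} then guarantees $|\mu_{\lat(M)}(\hat{0},p)| > 0$, so $w_{i_0}(M) > 0$. That single term alone makes the second sum strictly positive.

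The main obstacle is the delicate comparison of Whitney numbers across different ranks: a direct application of Proposition \ref{whit_dec} is blocked by the hypothesis $\rk(M) > \rk(N)$, which is precisely why the detour through $T^{r-s}(M)$ and the auxiliary estimate in Lemma \ref{trunc_lem} are needed to handle the single rank at which the two lattices diverge. A minor side-check, which I would dispatch quickly using Proposition \ref{stable_prop} and the identity $X^{*(\rho-\rk(L))} * \Susp^{i-1}(X^{*(\rk(L)-i)}) \simeq Y_i$, is that $(\textasteriskcentered)$ is valid for arbitrary $\rho$-immersed matroids and not just immersed ones. Once those pieces are in place, Lemma \ref{pos_lem} supplies the nonvanishing needed for strictness and the theorem falls out.
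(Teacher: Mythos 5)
Your proposal is correct and follows essentially the same route as the paper: factor $\tau$ through the truncation $T^{r-s}(M)$ via Lemma \ref{weak_fact}, combine Proposition \ref{whit_dec} with Lemma \ref{trunc_lem} to get $w_i(M)\geq w_i(N)$ for $i\leq \rk(N)$, feed this into Equation $(\textasteriskcentered)$, and invoke Lemma \ref{pos_lem} for strictness. If anything you are a bit more careful than the paper's write-up, both in spelling out the rank-$s$ comparison and in flagging that $(\textasteriskcentered)$ extends to $\rho$-immersed matroids via Proposition \ref{stable_prop}.
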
 

\begin{proof}
By Lemma \ref{weak_fact}, $\tau$ factors uniquely through a truncation.  Thus, Proposition \ref{whit_dec} and Lemma \ref{trunc_lem} combine to give $w_i(M) \geq w_i(N)$ for all $i \in [\rk(N)]$.  By Equation (\textasteriskcentered), \begin{align*}\beta_k(\T_X(M,l)) &= \sum\limits_{i=1}^{\rk(N)} w_i(M) \beta_k(Y_i) + \sum\limits_{i=\rk(N)+1}^{\rk(M)} w_i(M) \beta_k(Y_i) \\ &\geq \sum\limits_{i=1}^{\rk(N)} w_i(N) \beta_k(Y_i) + \sum\limits_{i=\rk(N)+1}^{\rk(M)} w_i(M) \beta_k(Y_i) \\ &= \beta_k(\T_X(N,l')) \end{align*}  Since each $w_i$ is positive (Lemma \ref{pos_lem}), $w_i(M) \beta_k(Y_i)$ is positive whenever $\beta_k(Y_i) \neq 0$. 
\end{proof}

When $X = S^0$, $Y_i \simeq S^{\rho - 2}$ for each $1 \leq i \leq \rho$.  Since $\T_{S^0}(M,l)$ is a wedge of $(\rho - 2)$-spheres, $\beta_{\rho-2}$ is the only nonzero Betti number.  

\begin{corollary}
If $\tau$ is a surjective $\rho$-admissable weak map between $\rho$-immersed matroids $(M,l)$ and $(N,l')$ and $\rk(M) > \rk(N)$, then $\beta_{\rho - 2}(\T_{S^0}(M,l)) > \beta_{\rho - 2}(\T_{S^0}(N,l')).$ 
\end{corollary}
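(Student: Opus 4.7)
The plan is to deduce the corollary directly from Theorem \ref{betti_weak} by specializing $X$ to $S^0$ and choosing the appropriate degree $k$. The first step is to identify the homotopy types of the spaces $Y_i = \Susp^{i-1}(X^{*(\rho-i)})$ in this special case. Since $(S^0)^{*(\rho-i)} \simeq S^{\rho-i-1}$, and suspension raises sphere dimension by one, I get
$$Y_i \simeq \Susp^{i-1}(S^{\rho-i-1}) \simeq S^{\rho-2}$$
for every $1 \leq i \leq \rho$. In particular, $\beta_{\rho-2}(Y_i) = 1 \neq 0$ uniformly across $i$, which matches the observation made just before the corollary that $\T_{S^0}(M,l)$ is a wedge of $(\rho-2)$-spheres and thus $\beta_{\rho-2}$ is its only nontrivial Betti number.

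Next I would verify the hypothesis of Theorem \ref{betti_weak}. Because $\rk(M) > \rk(N)$, the integer set $\{\rk(N)+1, \ldots, \rk(M)\}$ is nonempty, and for any $i$ in this range the previous step gives $\beta_{\rho-2}(Y_i) \neq 0$. Thus Theorem \ref{betti_weak} applies with $X = S^0$ and $k = \rho - 2$, yielding the strict inequality
$$\beta_{\rho-2}(\T_{S^0}(M,l)) > \beta_{\rho-2}(\T_{S^0}(N,l')),$$
which is exactly the conclusion of the corollary.

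There is really no substantive obstacle here, since the theorem has already been proved and the corollary is a bookkeeping specialization. The only small point to double-check is the degenerate case $i = \rho$, where $X^{*(\rho-i)} = X^{*0}$ needs to be interpreted consistently with the convention $S^{-1} = \emptyset$ used implicitly in Remark \ref{SimpForm}; but either interpretation still produces $S^{\rho-2}$ after the $(i-1)$-fold suspension, so the argument is unaffected. The proof should therefore be a brief one-line application of Theorem \ref{betti_weak}.
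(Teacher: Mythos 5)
Your proof is correct and takes the same route as the paper. The paper states, immediately before the corollary, that $Y_i \simeq S^{\rho-2}$ for each $1 \leq i \leq \rho$ when $X = S^0$ (so $\beta_{\rho-2}$ is the only nontrivial Betti number of $\T_{S^0}(M,l)$), and then the corollary is a direct specialization of Theorem \ref{betti_weak}; your explicit computation of $\Susp^{i-1}((S^0)^{*(\rho-i)}) \simeq S^{\rho-2}$ and your verification that some $i \in \{\rk(N)+1,\ldots,\rk(M)\}$ satisfies $\beta_{\rho-2}(Y_i)\neq 0$ are exactly the details the paper leaves implicit.
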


We conclude this section by considering when the Engstr\"om representations of two matroids on the same ground set, which are comparable by weak order, can be homotopy equivalent.  By experimentation, this only seems to happen when the matroids have isomorphic lattices of flats or equivalently, have the same simplification.

\begin{conjecture}
Given a finite CW complex $X$ which is not contractible and two $\rho$-immersed matroids $(M,l)$ and $(N,l')$ such that $\T_X(M,l) \simeq \T_X(N,l')$, if there exists a surjective weak map $\tau : M \to N$, then $\tau^{\#}$ is an isomorphism.
\end{conjecture}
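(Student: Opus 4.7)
The strategy is to bootstrap the homotopy equivalence $\T_X(M,l) \simeq \T_X(N,l')$ into successively finer constraints on $\tau$. By Proposition~\ref{stable_prop} the homotopy type of $\T_X(M,l)$ depends only on $\rho$ and the underlying matroid, not on the specific immersion, so I may replace $l$ and $l'$ by $\hat l$ and invoke Lemma~\ref{EZ} to assume, without loss of generality, that $\tau$ is $\rho$-admissible. The plan then has three layers: first, establish $\rk(M)=\rk(N)$; second, establish $w_i(M)=w_i(N)$ for every $i$; third, promote this to the conclusion that $\tau^{\#}$ is a lattice isomorphism.

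For the first layer, $\rk(N)\le\rk(M)$ already because $\tau$ is a surjective weak map. If the inequality were strict, Theorem~\ref{betti_weak} would produce a degree $k$ and index $i\in\{\rk(N)+1,\dots,\rk(M)\}$ with $\beta_k(Y_i)\ne 0$, and hence $\beta_k(\T_X(M,l))>\beta_k(\T_X(N,l'))$, contradicting the assumed homotopy equivalence. Such an $i$ exists because $X$ is not contractible: when $\rho>i$, the join $X^{*(\rho-i)}$ is non-contractible and hence so is its $(i-1)$-fold suspension $Y_i$; in the boundary case $\rho=i\ge 2$ one has $Y_i\simeq S^{i-2}$, which still has nonzero reduced homology. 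Thus $r:=\rk(M)=\rk(N)$.

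For the second layer, Theorem~\ref{betti_dec} supplies a surjection $\tau^*:H_*(\T_X(M,l))\twoheadrightarrow H_*(\T_X(N,l'))$. Since the two spaces are homotopy equivalent and their Betti numbers are finite, this surjection is actually an isomorphism in every degree. Plugging the Engström formula $\beta_k(\T_X(M,l))=\sum_{i=1}^{r}w_i(M)\beta_k(Y_i)$ into the Lucas inequalities $w_i(M)\ge w_i(N)$ of Proposition~\ref{whit_dec} then forces $w_i(M)=w_i(N)$ for every $i\in[r]$: when $\dim X\ge 1$ the spaces $Y_i$ have pairwise distinct top dimensions and each Whitney number is read off from a single Betti number, and when $\dim X=0$ term-by-term equality follows from nonnegativity combined with equality of the totals.

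The final step is to promote the numerical equality $w_i(M)=w_i(N)$ into the structural conclusion that $\tau^{\#}$ is a lattice isomorphism, and this is where I expect the main obstacle. My plan would be to dissect the combinatorial proof of Proposition~\ref{whit_dec} from~\cite{White}: Lucas's argument implicitly selects, rank by rank, a distinguished preimage under $\tau^{\#}$ for each flat of $\lat(N)$ and bounds $|\mu_N(\hat 0,q)|$ by the sum of $|\mu_M(\hat 0,p)|$ over preimages. Equality of Whitney numbers should collapse this domination into a term-by-term equality, yielding a rank-preserving bijection of flats that matches Möbius contributions; if one can then verify that this bijection is join-preserving, $\tau$ is a surjective strong map of equal-rank matroids and Proposition~\ref{strong_cor} concludes $M\cong N$ via $\tau^{\#}$. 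The sticking point is that Whitney numbers are a very coarse invariant — they do not even bound the number of flats at each rank, since $|\mu(\hat 0,p)|$ can exceed one — so passing from counting data to rigidity of $\tau^{\#}$ will probably require either a delicate analysis of Lucas's preimage construction or a new topological ingredient invisible to homology, such as probing $\tau^*$ at the cellular level on the geometric realization of $\hocolim\D_X(M,l)$ from Definition~\ref{geometric} to read off the lattice map directly from the combinatorics of cells.
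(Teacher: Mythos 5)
The statement you are attempting to prove is presented in the paper as a \emph{conjecture}, not a theorem: the paper gives no proof of it, supplying only a partial result (Proposition~\ref{strong_prop}) in the special case of surjective \emph{strong} maps. Your first two layers are sound and essentially reconstruct what the paper does establish: Theorem~\ref{betti_weak} forces $\rk(M)=\rk(N)$, and combining the Betti-number formula (\textasteriskcentered) with Proposition~\ref{whit_dec} yields $w_i(M)=w_i(N)$ for all $i$. One caveat you share with the paper's own proof of Proposition~\ref{strong_prop}: when $\rho>\rk(M)$, non-contractibility of $X$ alone does not guarantee $\beta_k(Y_i)\ne 0$ for the needed $i$, since a rational Moore space such as $\mathbb{RP}^2$ has vanishing Betti numbers, and hence so do all its joins and their suspensions. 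What rescues the argument when $\rho=\rk(M)$ is that $Y_\rho\simeq S^{\rho-2}$ carries a nonzero Betti number; the paper leaves this implicit exactly as you do.

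Your third layer is the genuine gap, and you diagnose it yourself accurately. Equality of Whitney numbers of the first kind is far too coarse an invariant to force $\tau^{\#}$ to be a lattice isomorphism: distinct geometric lattices of the same rank can share all their Whitney numbers, and pushing Lucas's preimage argument to the boundary of equality does not produce a rank-preserving bijection of flats, let alone a join-preserving one. The paper circumvents exactly this obstruction only for strong maps, by invoking Proposition~\ref{strong_cor} (a surjective strong map between matroids of equal rank is an isomorphism), which is a purely matroid-theoretic fact with no weak-map counterpart. Your proposal to show the resulting correspondence is join-preserving, and hence that $\tau$ is secretly strong, is precisely the step for which no argument is known; this missing step is the reason the statement remains a conjecture. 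Your attempt correctly locates the gap but does not close it, so it is not a proof.
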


We are able to give a partial result in the case of strong maps.

\begin{proposition}\label{strong_prop}
Given a finite CW complex $X$ which is not contractible and two $\rho$-immersed matroids $(M,l)$ and $(N,l')$ such that $\T_X(M,l) \simeq \T_X(N,l')$, if there exists a surjective strong map $\sigma : M \to N$, then $\sigma$ is an isomorphism.
\end{proposition}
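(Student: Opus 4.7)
The plan is to reduce to Proposition \ref{strong_cor} by establishing that $\rk(M) = \rk(N)$; once that is in hand, that proposition immediately yields that $\sigma$ is an isomorphism. Surjectivity of $\sigma$ combined with Lemma \ref{surj_poset} already gives $\rk(M) \geq \rk(N)$, so only the strict inequality needs to be ruled out.

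First I would remove the dependence on the particular choices $l$ and $l'$. By Remark \ref{SimpForm} together with Proposition \ref{stable_prop}, the homotopy type of $\T_X(M,l)$ depends only on $M$, $X$, and $\rho$, so I may assume $l = l' = \hat{l}$ without disturbing the hypothesis $\T_X(M,l) \simeq \T_X(N,l')$. Then Lemma \ref{EZ} makes $\sigma$ automatically $\rho$-admissible, and since every strong map is a weak map, Theorem \ref{betti_dec} applies and gives $\beta_k(\T_X(M,\hat{l})) \geq \beta_k(\T_X(N,\hat{l}))$ for every $k$; the assumed homotopy equivalence forces equality throughout.

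Assume now $\rk(M) > \rk(N)$ for contradiction. I would factor $\sigma$ as a weak map through the truncation $T^{\rk(M)-\rk(N)}(M)$ via Lemma \ref{weak_fact}, and combine Lemma \ref{trunc_lem} with Proposition \ref{whit_dec} to deduce $w_i(M) \geq w_i(N)$ for every $i \in \{1, \ldots, \rk(N)\}$. Substituting into equation (\textasteriskcentered), the equality of Betti numbers collapses to
\[
\sum_{i = \rk(N)+1}^{\rk(M)} w_i(M)\, \beta_k(Y_i) = 0
\]
for every $k$. Since $w_i(M) > 0$ by Lemma \ref{pos_lem}, each $Y_i$ with $\rk(N) < i \leq \rk(M)$ must be acyclic.

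The main obstacle is to contradict this acyclicity using only the non-contractibility of $X$. Specializing to $i = \rk(M)$, the space $Y_{\rk(M)} = \Susp^{\rk(M)-1}(X^{*(\rho - \rk(M))})$ equals $S^{\rk(M)-2}$ when $\rho = \rk(M)$ and is manifestly not acyclic; when $\rho > \rk(M)$ I would chase non-trivial reduced homology of $X$ upward through the Künneth formula for joins and the suspension isomorphism to produce a non-trivial Betti number of $Y_{\rk(M)}$. Once that contradiction is secured, $\rk(M) = \rk(N)$ and Proposition \ref{strong_cor} closes the argument.
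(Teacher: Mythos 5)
Your proof is correct and follows essentially the same strategy as the paper's: establish $\rk(M) = \rk(N)$ by comparing Betti numbers, then invoke Proposition \ref{strong_cor}. The paper accomplishes the rank equality by citing Theorem \ref{betti_weak} directly, whereas you re-derive its content inline (your chain through Lemma \ref{weak_fact}, Proposition \ref{whit_dec}, Lemma \ref{trunc_lem}, equation (\textasteriskcentered), and Lemma \ref{pos_lem} is precisely the proof of that theorem); the initial pass through Theorem \ref{betti_dec} is also unnecessary, since equality of Betti numbers already follows from the assumed homotopy equivalence. Your explicit reduction to $l = l' = \hat{l}$, via Proposition \ref{stable_prop} and Lemma \ref{EZ}, is a worthwhile clarification: Theorem \ref{betti_weak} requires $\sigma$ to be $\rho$-admissible with respect to $(l,l')$, which is not among the stated hypotheses of Proposition \ref{strong_prop}, and the paper's proof glosses over this step. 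The ``main obstacle'' you flag is genuine: when $\rho > \rk(M)$, forcing some $Y_i$ with $i \in \{\rk(N)+1,\dots,\rk(M)\}$ to be non-acyclic requires $X$ to have non-trivial reduced homology, but the hypothesis supplies only non-contractibility, which is strictly weaker for non-simply-connected finite CW complexes (a balanced presentation $2$-complex of a perfect group is acyclic yet not contractible). Your plan of chasing non-trivial homology through joins and suspensions presupposes exactly what needs to be shown. That said, this same issue is present in the paper's own proof, which invokes Theorem \ref{betti_weak} without verifying that its hypothesis $\beta_k(Y_i) \neq 0$ is satisfied for some $k$ and $i$, so it is a shared subtlety rather than a defect peculiar to your argument.
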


\begin{proof}
The main observation is that $M$ and $N$ must have the same rank.  If $\rk(M) > \rk(N)$, then by Theorem \ref{betti_weak}, there is a $k \in \N$ such that $\beta_k(\T_X(M,l)) > \beta_k(\T_X(N,l'))$ which is not possible since $\T_X(M,l) \simeq \T_X(N,l')$.  Therefore, $\rk(M) = \rk(N)$ and, by Proposition \ref{strong_cor}, $M \cong N$.
\end{proof}

Whenever $X$ is a homotopy sphere, the conditions of Proposition \ref{strong_prop} are satisfied.  Therefore, any mapping between homotopy sphere arrangements which arises from a strong map, but is not an isomorphism of the underlying matroids, strictly decreases Betti numbers.

\section{Functoriality}

Let $\M(r,n)$ be the category of rank $r$ matroids on $n$ elements with weak maps and define $\OA(r,n)$ to be the category of immersed rank $r$ matroids on $n$ elements with admissible weak maps.  In this section, we show that $\T_X : \OA(r,n) \to \hTop$ is a functor for each CW complex $X$, and that it extends naturally to a functor $\M(r,n) \to \hTop$.  Here, $\hTop$ denotes the homotopy category of $\Top$, i.e. the category with topological spaces as objects and homotopy classes of continuous maps as morphisms. 

By Theorem \ref{TopRepMat}, we know $\T_X$ maps the objects of $\OA(r,n)$ to objects in $\Top$ and by Corollary \ref{cont_map_thm}, $\T_X$ maps morphisms of $\OA(r,n)$ to homotopy classes of morphisms in $\Top$.  It is not immediately clear, however, that this correspondence is functorial since composition is not always preserved by $\#$.  

\begin{example} \label{func_ex}Let $M,N,L \in \M(3,4)$ be defined by their flats $$\begin{matrix} \F_M = \left\{ \begin{matrix} \emptyset, \{1\}, \{2\}, \{3\}, \{4\}, \{1,2\}, \{1,3\}, \{1,4\}, \{2,3\}, \{2,4\}, \{3,4\}, [4] \end{matrix} \right\} \\ \F_N = \left\{ \begin{matrix} \emptyset, \{1\}, \{2\}, \{3\}, \{4\}, \{1,2\}, \{1,3\}, \{1,4\}, \{2,3,4\}, [4] \end{matrix} \right\} \\ \F_L = \left\{ \begin{matrix} \emptyset, \{1\}, \{2\}, \{3,4\}, \{1,2\}, \{1,3,4\}, \{2,3,4\}, [4] \end{matrix} \right\} \end{matrix}$$ and observe that the identity map on $[4]$ gives weak maps $id_{MN} : M \to N$, $id_{NL} : N \to L$, and $id_{ML}: M \to L$.  Clearly, $id_{ML} = id_{NL} \circ id_{MN}$, but $$id_{ML}^{\#}(\{3,4\}) = \{3,4\} \neq \{2,3,4\} = id_{NL}^{\#} \circ id_{MN}^{\#}(\{3,4\}).$$

\begin{figure}[!h]
\begin{center} 
\includegraphics[width = 0.9\textwidth]{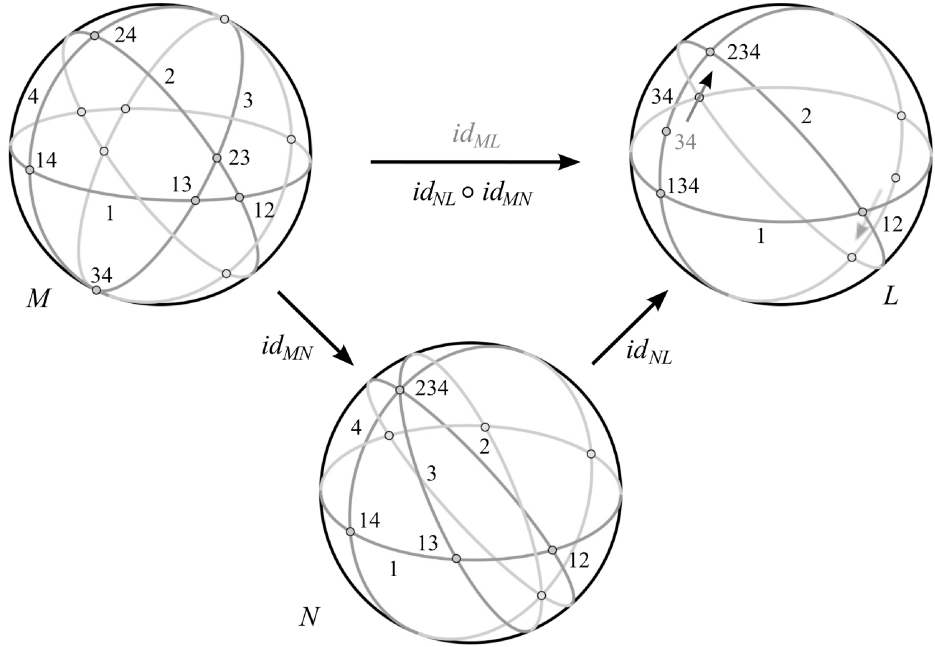}
\end{center}
\caption{Sphere arrangements corresponding to matroids $M$, $N$, and $L$ in Example \ref{func_ex}}
\end{figure}

\end{example}

Fortunately, we can show that $\T_X$ preserves composition up to homotopy, i.e. $(\sigma \circ \tau)^* \simeq \sigma^* \circ \tau^*$ for any pair of weak maps $\tau : M \to N$ and $\sigma : N \to L$.  This is illustrated in Figure 3 by the arrows mapping $id_{ML}^{\#}(\{3,4\}) = \{3,4\}$ to $id_{NL}^{\#} \circ id_{MN}^{\#}(\{3,4\}) = \{2,3,4\}$.   

\begin{theorem}\label{functor} 
For any fixed $r,n \in \N$ with $r \leq n$ and $CW$ complex $X$, $\T_X$ is a functor from \emph{$\OA(r,n)$} to $\hTop$.  Furthermore, if $X$ is equipped with a free $\Gamma$-action for some group $\Gamma$, then $\T_X^{\Gamma}$ is a functor from \emph{$\OA(r,n)$} to $\Gamma$-$\hTop$. 
\end{theorem}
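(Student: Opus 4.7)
The plan is to verify the two functor axioms for $\T_X$: preservation of identities and preservation of composition up to homotopy. The other functor data is already supplied --- objects go to spaces by Theorem~\ref{TopRepMat}, and morphisms go to homotopy classes of continuous maps by Corollary~\ref{cont_map_thm}. The identity axiom is immediate: for $id_M : M \to M$, the induced poset map $id_M^\# : \lat(M) \to \lat(M)$ is the identity, the natural transformation constructed in Theorem~\ref{weak_map_thm} is the identity, and so $\T_X(id_M) = id_{\T_X(M,l)}$ on the nose.

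The content of the theorem is the composition axiom, and the obstacle is exhibited by Example~\ref{func_ex}: for composable admissible weak maps $\tau : (M,l) \to (N,l')$ and $\sigma : (N,l') \to (L,l'')$ one can have $(\sigma \circ \tau)^\# \neq \sigma^\# \circ \tau^\#$. The one-sided inequality
$$(\sigma \circ \tau)^\#(p) = \cl_L(\sigma(\tau(p))) \;\leq\; \cl_L(\sigma(\cl_N(\tau(p)))) = (\sigma^\# \circ \tau^\#)(p)$$
does hold for every $p \in \lat(M)$, because $\tau(p) \subseteq \cl_N(\tau(p))$. Since $l''$ is order-reversing, this inequality yields the inclusion $l''((\sigma^\# \circ \tau^\#)(p)) \subseteq l''((\sigma \circ \tau)^\#(p))$ in $B_\rho$, so admissibility of $\tau$ and $\sigma$ propagates through the composite of inclusions to show that $\sigma \circ \tau : (M,l) \to (L,l'')$ is itself $\rho$-admissible. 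Hence Theorem~\ref{weak_map_thm} produces a morphism of diagrams $((\sigma \circ \tau)^\#, \gamma) : \D_X(M,l) \to \D_X(L,l'')$ whose natural transformation $\gamma$ is by inclusions.

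Separately, composing the morphisms of diagrams supplied by Theorem~\ref{weak_map_thm} for $\tau$ and $\sigma$ yields a morphism $(\sigma^\# \circ \tau^\#, \alpha_\sigma \cdot \alpha_\tau) : \D_X(M,l) \to \D_X(L,l'')$ whose natural transformation is the composite of two inclusions of subcomplexes and is therefore again an inclusion. Functoriality of the hocolim on diagram morphisms (standard, and implicit in Proposition~\ref{morphdiag}) gives $(\sigma^\# \circ \tau^\#)^* = \sigma^* \circ \tau^*$. The two poset maps $\sigma^\# \circ \tau^\#$ and $(\sigma \circ \tau)^\#$ from $\lat(M)$ to $\lat(L)$ satisfy $\sigma^\# \circ \tau^\#(p) \geq (\sigma \circ \tau)^\#(p)$ pointwise, and both of the associated natural transformations $\D_X(M,l) \to \D_X(L,l'')$ are by inclusions, so Remark~\ref{morph_hom_rem} applies and gives $(\sigma^\# \circ \tau^\#)^* \simeq (\sigma \circ \tau)^*$. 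Chaining the two statements yields
$$\T_X(\sigma \circ \tau) = (\sigma \circ \tau)^* \;\simeq\; \sigma^* \circ \tau^* = \T_X(\sigma) \circ \T_X(\tau)$$
in $\hTop$, which is the composition axiom.

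The equivariant refinement requires no essentially new work: a free action of $\Gamma$ on $X$ induces a free action on every $D_X(\sigma)$, and all the structure maps of $\D_X(M,l)$ along with every inclusion natural transformation used above are $\Gamma$-equivariant; the homotopies provided by Proposition~\ref{morph_hom} and Remark~\ref{morph_hom_rem} can therefore be chosen $\Gamma$-equivariantly, so $\T_X^\Gamma$ lands in $\Gamma$-$\hTop$. The hard part of the argument is the single step where one must pass through the intermediate morphism $\sigma^\# \circ \tau^\#$ and invoke Remark~\ref{morph_hom_rem} to absorb the discrepancy $(\sigma \circ \tau)^\# \neq \sigma^\# \circ \tau^\#$; the payoff of the admissibility machinery and of using homotopy colimits rather than ordinary colimits is precisely that this discrepancy is invisible after passing to $\hTop$.
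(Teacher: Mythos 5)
Your proof is correct and follows essentially the same approach as the paper: identity is immediate, and composition is handled by comparing $(\sigma \circ \tau)^\#$ with $\sigma^\# \circ \tau^\#$ via the pointwise inequality $(\sigma\circ\tau)^\#(p) \le \sigma^\#\circ\tau^\#(p)$ and then invoking Remark~\ref{morph_hom_rem}/Proposition~\ref{morph_hom}. The one place you go beyond the paper's write-up is the explicit check that $\sigma \circ \tau$ is itself $\rho$-admissible (using that $l''$ is order-reversing), which the paper leaves tacit but which is needed for $\OA(r,n)$ to be a well-defined category; this is a welcome bit of added rigor, not a different method.
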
  

\begin{proof}  From the preceding discussion, we know that $\T_X$ maps objects to objects and morphisms to morphisms.  All that remains to check is that $\T_X$ preserves identity maps and compositions up to homotopy.  To see that the identity map $id$ on an immersed matroid $(M,l)$ induces the identity map on $\T_X(M,l)$, observe that $id^{\#}$ is the identity on $\lat(M)$ and that each component of $\alpha$ in the morphism $(id,\alpha) : \D_X(M,l) \to \D_X(M,l)$ is the identity map.  Therefore, $(id,\alpha)$ is the identity morphism on $\D_X(M,l)$ which induces the identity map on its homotopy colimit, and hence on $\T_X(M,l)$.

Next, let $\tau : (M,l_M) \to (N,l_N)$ and $\sigma : (N,l_N) \to (L,l_L)$ be admissible weak maps between rank $r$ immersed matroids.  We claim that $(\sigma \circ \tau)^{\#}(p) \subseteq \sigma^{\#} \circ \tau^{\#}(p)$ for all $p \in \lat(M)$.  Clearly, $\tau(p) \subseteq cl_N(\tau(p))$, so $\sigma(\tau(p)) \subseteq \sigma(cl_N(\tau(p)))$.  This implies that $$(\sigma \circ \tau)^{\#}(p) = cl_L(\sigma(\tau(p))) \subseteq cl_L(\sigma(cl_N(\tau(p)))) = \sigma^{\#} \circ \tau^{\#}(p)$$ for all $p \in \lat(M)$.  By Remark \ref{morph_hom_rem}, there exists a natural transformation $\gamma : \sigma^{\#} \circ \tau^{\#} \to (\sigma \circ \tau)^{\#}$.  Consider the morphisms of diagrams $(\sigma^{\#} \circ \tau^{\#},\alpha)$ and $((\sigma \circ \tau)^{\#},\beta)$ obtained in Theorem \ref{weak_map_thm}, i.e. $\alpha$ and $\beta$ are natural transformations whose components consist of inclusion maps.  By Proposition \ref{morph_hom}, we know that $(\sigma \circ \tau)^* \simeq \sigma^* \circ \tau^*$ and therefore, $\T_X$ is a functor from $\OA(r,n)$ to $\hTop$.  The $\Gamma$-equivariant version of the theorem falls out naturally from the homotopy colimit toolbox in \cite{DH}.
\end{proof}

By Lemma \ref{EZ}, we can immediately restate Theorem \ref{functor} in terms of matroids.

\begin{corollary}
The map $\mu : \M(r,n) \to \emph{$\OA(r,n)$}$ defined by $M \mapsto (M,\hat{l})$ extends each $\T_X$ to a functor from $\M(r,n)$ to $\hTop$.
\end{corollary}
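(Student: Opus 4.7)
The plan is to show that $\mu$ is itself a functor, after which the desired result follows by composing it with $\T_X$, which was proved to be a functor on $\OA(r,n)$ in Theorem \ref{functor}.

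First I would verify that $\mu$ is well-defined on objects and morphisms. On objects there is nothing to do: $(M,\hat{l})$ is by definition a rank $r$ immersed matroid (since $\hat{l}$ is the canonical $r$-immersion $\hat{l}(p) = [r-\rk(p)]$). On morphisms, given a weak map $\tau : M \to N$ in $\M(r,n)$, Lemma \ref{EZ} with $\rho = r$ tells us that $\tau$ is automatically an $r$-admissible weak map from $(M,\hat{l})$ to $(N,\hat{l})$, so $\mu(\tau) := \tau$ is a valid morphism in $\OA(r,n)$.

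Next I would check that $\mu$ preserves identities and composition strictly. The identity weak map on $M$ is still the identity set map on ground sets, so it agrees with the identity morphism on $(M,\hat{l})$ in $\OA(r,n)$. If $\tau : M \to N$ and $\sigma : N \to L$ are composable weak maps, their composition $\sigma \circ \tau$ as a set map is the same whether viewed in $\M(r,n)$ or in $\OA(r,n)$, so $\mu(\sigma \circ \tau) = \mu(\sigma) \circ \mu(\tau)$. Hence $\mu : \M(r,n) \to \OA(r,n)$ is a genuine functor (not merely a functor up to homotopy).

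Finally, by Theorem \ref{functor}, $\T_X : \OA(r,n) \to \hTop$ is a functor, so the composition $\T_X \circ \mu : \M(r,n) \to \hTop$ is also a functor, which is precisely the extension of $\T_X$ to $\M(r,n)$ claimed in the statement. No step presents any real obstacle here; the entire content of the corollary is that Lemma \ref{EZ} lets us lift every weak map to an admissible one in a way that is visibly compatible with composition, so the only thing to write carefully is the identification of $\mu$ on morphisms via that lemma.
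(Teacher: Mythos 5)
Your proof is correct and takes the same route the paper does (the paper leaves this corollary unproved, simply noting that it follows ``by Lemma~\ref{EZ}''): use Lemma~\ref{EZ} to show that $\mu$ sends weak maps to $r$-admissible weak maps, observe that $\mu$ is a strict functor because it is the identity on underlying set maps, and then compose with the functor $\T_X$ from Theorem~\ref{functor}. Spelling out that $\mu$ preserves identities and composition on the nose is the right level of care, and nothing is missing.
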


We conclude this section with the following remark:

\begin{remark} At first glance, one might hope for a strengthening of Corollary \ref{cont_map_thm} that yields a functorial relationship from $\OA(r,n)$ to $\Top$ rather than $\hTop$.  Unfortunately, even if such an improvement is possible, the fact that $\#$ does not preserve composition precludes the existence of such a functor.  Thus, Theorem \ref{functor} is the best we can possibly hope for in our context. \end{remark}

\section{Future Directions}

This paper establishes a new framework for studying the structure theory of matroids regarding weak and strong maps.  We conclude with some unresolved questions and a discussion potential areas where our work may be beneficial.

\subsection{The Choice of Immersion of a Matroid}

In Theorem \ref{TopRepMat}, we get that the homotopy type of $\T_X(M,l)$ is independent of the choice of $l$.  It would be nice to have a more direct proof of this fact.

\begin{question}
Given two immersions $(M,l)$ and $(M,l')$ of the same matroid $M$, can we find an \emph{explicit} map between $\T_X(M,l)$ and $\T_X(M,l')$ which is a homotopy equivalence?
\end{question}

\subsection{The Unimodality of the Whitney Numbers}

With the new topological interpretation of Whitney numbers presented in this paper, one may hope to solve more elusive problems such as the famous unimodality conjectures of Rota \cite{Oxley,Rota71}.

\begin{conjecture}[Rota, 1971] \label{whit_conj}  For every rank $r$ geometric lattice, $L$, the Whitney numbers of the first kind are unimodal, i.e. for some $0 \leq k \leq r$, $$w_0(L) \leq \dots \leq w_{k-1}(L) \leq w_k(L) \geq w_{k+1}(L) \geq \dots \geq w_{r}(L).$$ \end{conjecture}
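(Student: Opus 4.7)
The plan is to exploit the identification $w_k(M) = \beta_{2r-k+2}(\T_{S^1}(M))$ established in Section \ref{Apps}, so that Conjecture \ref{whit_conj} becomes the assertion that the nontrivial Betti numbers of $\T_{S^1}(M)$, read in the appropriate order, form a unimodal sequence. This translates a purely combinatorial problem into a topological one about a concrete CW complex built diagrammatically from $\lat(M)$, opening the door to Lefschetz-type techniques.

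First, I would refine the analysis of $\T_{S^1}(M)$ beyond the wedge-of-spheres description in Example \ref{real/complex}, using the homotopy colimit presentation via $\D_{S^1}(M)$, to identify a natural degree-two cohomology class $\omega \in H^2(\T_{S^1}(M))$ playing the role of a K\"ahler class. When $M$ is realizable over $\C$, $\T_{S^1}(M)$ should recover the homotopy type of a standard model for the complement of the associated complex hyperplane arrangement, and the candidate $\omega$ would arise from the Chern class of a line bundle on the wonderful compactification. Second, I would attempt to prove a Hard Lefschetz statement: that for each admissible $k$, cup product with $\omega^k$ induces an isomorphism (or at least an injection) between the two complementary Betti groups straddling the middle degree. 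Together with Poincar\'e--Lefschetz-type duality on the model, this forces the Betti sequence to be symmetric and unimodal, and Conjecture \ref{whit_conj} then follows from the identity (\textasteriskcentered) of Section \ref{Apps}.

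The principal obstacle is that $\T_{S^1}(M)$ is only defined up to homotopy and, when $M$ is non-realizable, carries no intrinsic smooth or algebraic structure, so classical K\"ahler techniques are unavailable. A purely combinatorial substitute is therefore needed, most naturally the Chow ring $A^*(M)$ of $M$: one would want a Hodge--Riemann-type bilinear relation on $A^*(M)$, together with a comparison theorem identifying the relevant graded pieces of $A^*(M)$ with $H^*(\T_{S^1}(M))$ through the diagram $\D_{S^1}(M)$. A more modest intermediate target is the log-concavity of the $w_k(M)$, which implies unimodality and reduces the task to a single bilinear inequality per pair of consecutive Whitney numbers; this could potentially be attacked by using the functorial framework of Section 4 to construct explicit continuous maps between the wedge summands of $\T_{S^1}(M)$ corresponding to $w_{k-1}$, $w_k$, and $w_{k+1}$, and then comparing the induced maps on homology via Lemma \ref{VUFL}.
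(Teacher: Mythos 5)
This statement is a \emph{conjecture} in the paper (Rota's 1971 conjecture), stated in the ``Future Directions'' section without proof; the paper only notes that Huh and Katz had at that time established the realizable case via intersection theory on toric varieties. So there is no ``paper's own proof'' to compare against, and your proposal is correspondingly not a proof but a speculative research program, which you yourself flag with ``I would attempt,'' ``the principal obstacle,'' and so on.

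Beyond that, there is a concrete obstruction to the plan as stated. By Remark \ref{SimpForm} (and Example \ref{real/complex}), $\T_{S^1}(M)$ is homotopy equivalent to a wedge of spheres, so every cup product of positive-degree classes in $H^*(\T_{S^1}(M))$ vanishes. There is therefore no nonzero class $\omega \in H^2$ for which cup product by $\omega^k$ could be injective on any nonzero group, and a Hard Lefschetz statement for $\T_{S^1}(M)$ cannot even be formulated nontrivially. Relatedly, $\T_X(M,l)$ in Theorem \ref{TopRepMat} is the \emph{union} $\bigcup_{A \in \mathbf{A}} A$ of the members of the arrangement, not the complement; for realizable $M$ it is the analog of the union of hyperplanes, whereas the Orlik--Solomon algebra, the wonderful compactification, and the Chow ring $A^*(M)$ all live on the complement side. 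So the hoped-for comparison between $A^*(M)$ and $H^*(\T_{S^1}(M))$ is not expected to exist, and the identification $w_k(M) = \beta_{2r-k-2}(\T_{S^1}(M))$, while giving a topological \emph{restatement} of the conjecture, does not by itself put any Lefschetz-type structure within reach. (Incidentally, the exponent should be $2r-k-2$, not $2r-k+2$; the paper's inline sentence after Theorem \ref{betti_dec} contains a sign typo, but Example \ref{real/complex} has the correct value.) The eventual resolution of the conjecture in full generality (Adiprasito--Huh--Katz) indeed proceeds through a combinatorial Hodge theory on $A^*(M)$, but that argument is independent of, and not recoverable from, the Engstr\"om representation discussed here.
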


\begin{conjecture}[Rota, 1971]  For every rank $r$ geometric lattice, $L$, the Whitney numbers of the second kind are unimodal, i.e. for some $0 \leq k \leq r$, $$W_0(L) \leq \dots \leq W_{k-1}(L) \leq W_k(L) \geq W_{k+1}(L) \geq \dots \geq W_{r}(L).$$ \end{conjecture}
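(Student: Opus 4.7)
The plan is to seek a topological interpretation of the Whitney numbers of the second kind analogous to the Betti number interpretation for $w_k$ established in Theorem \ref{betti_dec}, and then exploit naturality under weak maps together with topological inequalities to force unimodality. Since $W_k(M)$ simply counts the rank $k$ flats of $M$, the natural starting point is the sequence of truncations $T^{r-k}(M)$, whose coatoms correspond bijectively to the rank $k$ flats of $M$. The first step would be to express $W_k(M)$ as a specific functional $\Phi_k$ of $\T_X(T^{r-k}(M))$ for a suitable $X$; one tempting route is to expand the wedge decomposition in Remark \ref{SimpForm} and apply Lemma \ref{enum_lem} on each upper interval $[\hat{0},p]$ of $\lat(M)$ to isolate the cardinality of the top rank rather than the Möbius-weighted sum $w_k$.

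Suppose such an interpretation $W_k(M) = \Phi_k(\T_X(T^{r-k}(M)))$ can be secured. The second step is to construct, for each $k$ below the putative mode $k^*$, a morphism of Engström representations whose existence forces $\Phi_k \leq \Phi_{k+1}$. The natural candidate is the iterated weak map $id_k : T^{r-k-1}(M) \to T^{r-k}(M)$ furnished by Lemma \ref{weak_fact}, which is admissible with respect to $\hat{l}$ by Lemma \ref{EZ} and thereby induces a continuous map on representations by Corollary \ref{cont_map_thm}. The induced map on homology would then be analyzed via Lemma \ref{VUFL}, with the aim of strengthening the surjection of Theorem \ref{betti_dec} to a strict inequality on the relevant topological functional. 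For $k > k^*$, one would reverse direction by invoking matroid duality and the corresponding involution on topological representations to mirror the argument.

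The central and severe obstacle, and the reason this conjecture has stood open since 1971, is that no topological inequality presently accessible within this framework is strong enough to force unimodality of the $W_k$. Even the companion conjecture for $w_k$ was eventually resolved only by Hodge-theoretic techniques on the Chow ring of $M$, structures that are invisible to the homotopy type of $\T_X(M,l)$. In our setting, $\T_X(M,l)$ is a wedge of suspensions and therefore carries no nontrivial cup product, so any Hard-Lefschetz-type monotonicity would have to be imported externally, e.g.\ by passing to the intersection cohomology of the wonderful compactification of a realization and relating it back to $\T_X$ through a comparison theorem that is not currently available. A more realistic interim goal would be to verify the conjecture for restricted classes where the Engström representations admit extra structure, such as matroids realizable over $\mathbb{C}$ or matroids whose order complex is shellable, and then use the functoriality of $\T_X$ from Theorem \ref{functor} to propagate unimodality along admissible weak maps within such classes.
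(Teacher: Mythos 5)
This statement is a \emph{conjecture}, not a theorem: the paper presents Rota's 1971 conjecture on the Whitney numbers of the second kind as an open problem in the Future Directions section, offers no proof, and in fact offers no partial result toward it (unlike the first-kind case, where Theorem \ref{betti_dec} and its corollary give a genuine topological reproof of Proposition \ref{whit_dec}). So there is no proof in the paper to compare your attempt against. You appear to recognize this, since your writeup candidly pivots from ``here is a plan'' to ``here is why the plan cannot close,'' and that self-assessment is accurate: the Engstr\"om representation $\T_X(M,l)$ is a wedge of suspensions, so its reduced homology records exactly the $w_i(M)$ up to multiplicities coming from $\beta_k(Y_i)$, and nothing in that homotopy type isolates $W_k(M)$, which is a raw flat count with no M\"obius weighting. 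The truncation idea does not help either, because the coatoms of $T^{r-k}(M)$ contribute to $w_{r-k}(T^{r-k}(M))$ with their M\"obius values, not with multiplicity one, so $\Phi_k$ as you describe it is not actually recoverable from the Betti numbers.

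A few further points worth flagging. First, the monotone chain of maps $T^{r-k-1}(M) \to T^{r-k}(M)$ all point in one direction in the weak order, so even if you had the right functional you would at best get a monotone sequence, not a unimodal one; the ``reverse direction via matroid duality'' step is not available here because the Engstr\"om construction is not compatible with duality in the way you would need (duality does not act on $\lat(M)$ by an order-reversing bijection in general). Second, the reference to cup products and Hard Lefschetz is on the right track conceptually, but it underscores that the obstruction is structural, not technical: the paper's framework sees only the homotopy type, which is too coarse. Your concluding suggestion, to restrict to realizable or otherwise structured classes and exploit functoriality, is a reasonable research direction, but it is not a proof and should not be presented as a step toward one within this framework. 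The honest conclusion is the correct one: this statement remains open, and the paper makes no claim otherwise.
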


Huh and Katz recently proved Conjecture \ref{whit_conj} for representable matroids by studying the intersection theory of certain toric varieties \cite{HK11}.  Since homotopy colimits of diagrams of spaces can be used to construct toric varieties \cite{WZZ99}, one might hope to extend the work of Huh and Katz to non-representable matroids.

\subsection{Matroid Bundles}

Gelfand and MacPherson used oriented matroids to provide nice combinatorial formulas for characteristic classes of topological spaces with regular cell structures \cite{GM92} by associating an oriented matroid bundle to each real vector bundle. The Topological Representation Theorem for oriented matroids then allows one to construct a ``spherical'' bundle for each oriented matroid bundle.  Anderson and Davis showed that both of the above processes are functorial and their composition behaves like the forgetful functor where the zero section is deleted.  Thus, little information is lost in the \emph{combinatorialization} of real vector bundles \cite{Anderson99,AD99}.  It is natural to ask if we can combinatorialize vector bundles over unordered fields, such as $\C$.  Our hope is that the results in this paper will provide the toolbox for extending this line of work.

\begin{acknowledgements}
The author wishes to thank Eric Babson and Alexander Engstr\"{o}m for their many helpful conversations and suggestions.  This work was partially supported by the University of California, Davis Department of Mathematics, a U.S. Department of Education Graduate Assistance in Areas of National Need Fellowship, and National Science Foundation Grant \# DMS-0636297.\end{acknowledgements}

\appendix

\section{Appendix} 

Here we give an informal exposition on homotopy colimits.  This is merely intended to help readers who are unfamiliar with diagrams of spaces build some intuition on what these objects are and how they are useful.  For a more formal and complete introduction, we refer to the reader to \cite{WZZ99} and \cite{ZZ93}, especially for anyone coming from a more combinatorial background.

Although the language of diagrams and colimits is not commonly used in discrete geometry, it can be applied in many settings throughout the field.  For instance, a geometric simplicial complex is the colimit of a diagram over its face poset, ordered by reverse inclusion, where the spaces are geometric realizations of the simplices and the maps are simply the inclusion maps.  We illustrate this in the following example:

\begin{example} \label{apex1}
Let $\Delta$ be the simplicial complex consisting of two triangles which are glued along one edge.  The complex consists of four 0-simplices, five 1-simplices, and two 2-simplices which are glued together as drawn in Figure 4.

\begin{figure}[!h]
\begin{center} 
\includegraphics[width = 0.45\textwidth]{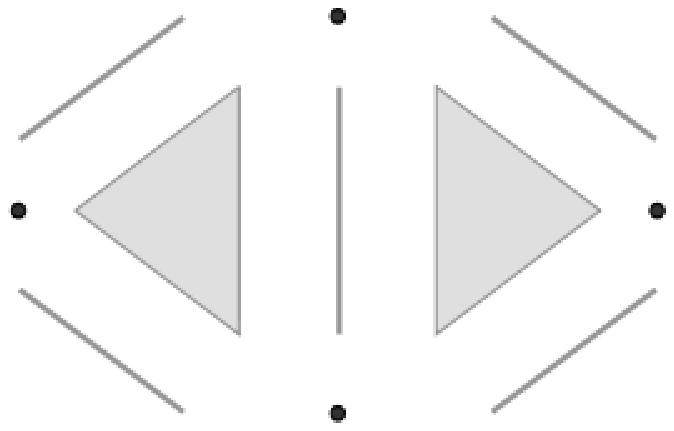}
\hspace{0.05\textwidth}
\includegraphics[width = 0.45\textwidth]{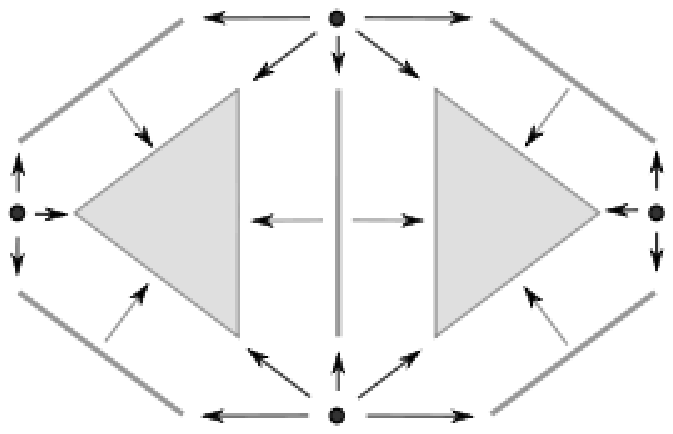}
\end{center}
\caption{The disjoint union of the simplices of $\Delta$ with indicated gluing maps}
\end{figure}

We can form a diagram of spaces $\D(F(\Delta))$ over the face poset $F(\Delta)$ of $\Delta$ by using the faces themselves as the spaces and the corresponding inclusion map for each relation.  The colimit of the diagram identifies the simplices along each of the gluing images yielding a geometric realization of $\Delta$. 

\begin{figure}[!h]
\begin{center} 
\includegraphics[width = 0.4\textwidth]{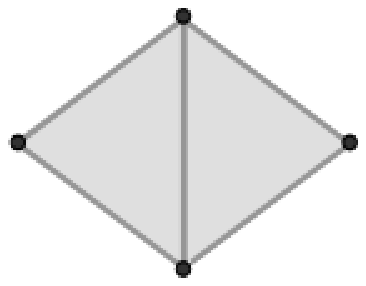}
\hspace{0.05\textwidth}
\includegraphics[width = 0.45\textwidth]{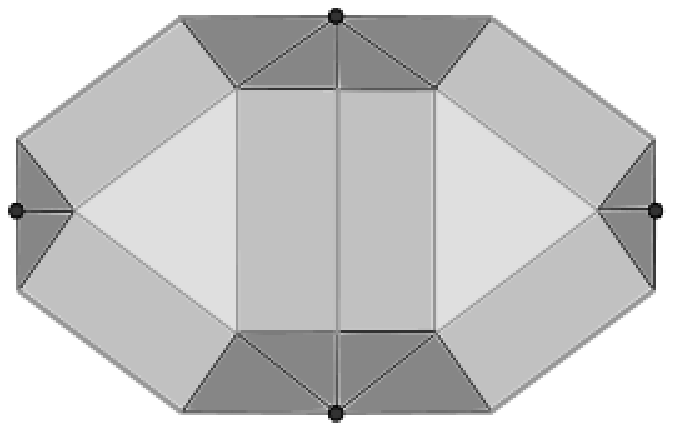}
\end{center}
\caption{The colimit (left) and homotopy colimit (right) of the diagram $\D(F(\Delta))$}
\end{figure}

The homotopy colimit of the diagram consists of the disjoint union of the simplices in $\Delta$ glued together with the mapping cylinders of each of the inclusions.  Each of the medium grey cells in Figure 5 is obtained from mapping a 1-simplex into a 2-simplex and each of the dark grey cells corresponds to the choice of mapping a 0-simplex either to a 1-simplex first and then to a 2-simplex or directly to a 2-simplex . 
\end{example}

Notice that the colimit and homotopy colimit of the diagram in Example \ref{apex1} have the same homotopy type.  This is not always the case, even for diagrams with well behaved spaces and maps.  In fact, a \emph{natural homotopy equivalence} of two $P$-diagrams, i.e. a morphism of diagrams $(id,\alpha)$ where $\alpha_p$ is a homotopy equivalence for each $p \in P$, cannot guarantee a  homotopy equivalence between colimits.  This is a significant drawback, even on an intuitive level, because if we take two collections of spaces which are componentwise homotopy equivalent and glue them together via the same combinatorial data, we want the resulting spaces to be homotopy equivalent as well.  The reason this does not happen for colimits is that one can make too many identifications and kill off interesting topology.  One should think of the homotopy colimit as gluing the spaces in a diagram together more carefully.  Consider the following example:
\begin{example} Let $P$ be the poset in Figure 6.
\begin{figure}[!h]
\begin{center} 
\includegraphics[width = 0.23\textwidth]{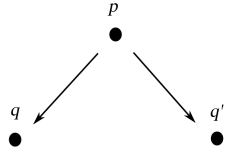}
\end{center}
\caption{A simple poset $P$}
\end{figure}

Define $P$-diagrams $\D$ and $\E$ by $D(p) = E(p) = S^1$, $D(q) = E(q) = E(q') = \bullet$, and $D(q') = D^2$ where every map is constant except for $d_{pq'}$, which is the inclusion map of $D(p)$ into the boundary of $D(q')$.

\begin{figure}[!h]
\begin{center} 
\includegraphics[width = 0.23\textwidth]{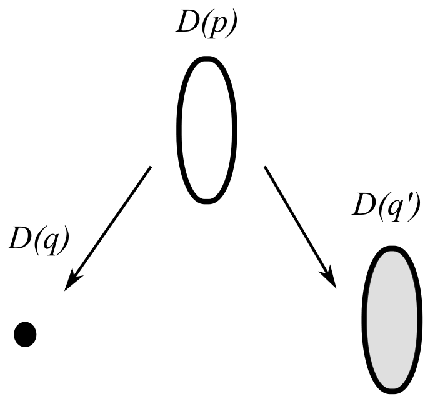}
\hspace{0.1\textwidth}
\includegraphics[width = 0.23\textwidth]{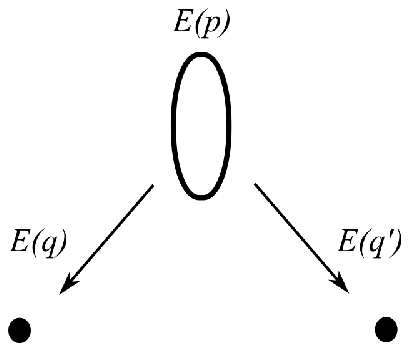}
\end{center}
\caption{The diagrams of spaces $\D$ (left) and $\E$ (right)}
\end{figure}

The diagrams $\D$ and $\E$ are naturally homotopy equivalent, yet their colimits, clearly have different homotopy types. In the colimit of $\D$, $D(p)$ is identified with both the boundary of $D(q')$ and the point $D(q)$ and hence, $\colim \D \simeq S^2$.  In the colimit of $\E$, all of the spaces are identified with a single point.

\begin{figure}[!h]
\begin{center} 
\includegraphics[width = 0.23\textwidth]{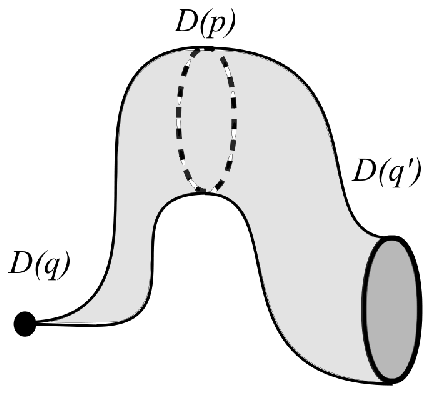}
\hspace{0.1\textwidth}
\includegraphics[width = 0.23\textwidth]{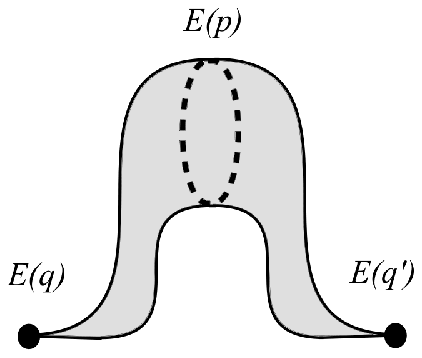}
\end{center}
\caption{The homotopy colimits of $\D$ and $\E$}
\end{figure}

Observe, however, that this problem is fixed if we glue in the mapping cylinder from each map in the diagram rather than simply making identifications.  In Figure 8, it is clear that the homotopy colimits of the diagrams are both homotopy equivalent to $S^2$.  Lemma \ref{HL} asserts that this will always be the case.
\end{example}

\bibliographystyle{plain}
\bibliography{references}{}

\end{document}